\crefname{assumption}{Assumption}{Assumptions}
\newcounter{step}
\crefname{step}{Step}{Steps}
\newcommand\tenq[2][1]{%
\def\useanchorwidth{T}%
\ifnum#1>1%
\stackunder[0pt]{\tenq[\numexpr#1-1\relax]{#2}}{\!\scriptscriptstyle\thicksim}%
\else%
\stackunder[1pt]{#2}{\!\scriptstyle\thicksim}%
\fi%
}
\DeclareRobustCommand\widecheck[1]{{\mathpalette\@widecheck{#1}}}
\def\@widecheck#1#2{%
    \setbox\z@\hbox{\m@th$#1#2$}%
    \setbox\tw@\hbox{\m@th$#1%
       \widehat{%
          \vrule\@width\z@\@height\ht\z@
          \vrule\@height\z@\@width\wd\z@}$}%
    \dp\tw@-\ht\z@
    \@tempdima\ht\z@ \advance\@tempdima2\ht\tw@ \divide\@tempdima\thr@@
    \setbox\tw@\hbox{%
       \raise\@tempdima\hbox{\scalebox{1}[-1]{\lower\@tempdima\box
\tw@}}}%
    {\ooalign{\box\tw@ \cr \box\z@}}}
\newcommand{\Er}{\mathrm{E}}
\newcommand{\Prb}{\mathrm{P}}
\newcommand{\Rb}{\mathbb{R}}
\newcommand{\A}{\mathbf{A}}
\newcommand{\C}{\mathbf{C}}
\newcommand{\D}{\mathbf{D}}
\newcommand{\W}{\mathbf{W}}
\newcommand{\Q}{\mathbf{Q}}
\newcommand{\Step}[2][]{%
  \refstepcounter{step}%
  \paragraph*{Step \thestep: #2.}%
  \if\relax\detokenize{#1}\relax\else\label{#1}\fi
}
\numberwithin{equation}{section}
\newtheorem{theorem}{Theorem}[section]
\newtheorem{lemma}{Lemma}[section]
\newtheorem{corollary}{Corollary}[section]
\providecommand{\customgenericname}{}
\newcommand{\newcustomtheorem}[2]{%
  \newenvironment{#1}[1]
  {%
   \renewcommand\customgenericname{#2}%
   \renewcommand\theinnercustomgeneric{##1}%
   \innercustomgeneric
  }
  {\endinnercustomgeneric}
}
\theoremstyle{definition}
\newtheorem{example}{Example}[section]
\newcommand{\mylabel}[2]{#2\def\@currentlabel{#2}\label{#1}}
\let\tilde\widetilde
\begin{document}

\setlength{\abovedisplayskip}{5pt}
\setlength{\belowdisplayskip}{5pt}
\setlength{\abovedisplayshortskip}{5pt}
\setlength{\belowdisplayshortskip}{5pt}
\hypersetup{colorlinks,breaklinks,urlcolor=blue,linkcolor=blue}

\title{\LARGE Decoupling and randomization for double-indexed permutation statistics}

\author{Mingxuan Zou\thanks{Department of Statistics, University of Chicago, Chicago, IL 60637, USA; e-mail: {\tt mingxuanzou@uchicago.edu}}, ~~Jingfan Xu\thanks{Department of Statistics, University of Washington, Seattle, WA 98195, USA; e-mail: {\tt jfanxu@uw.edu}}, ~~Peng Ding\thanks{Department of Statistics, University of California, Berkeley, CA 94720, USA; e-mail: {\tt pengdingpku@berkeley.edu}}, ~and~Fang Han\thanks{Department of Statistics, University of Washington, Seattle, WA 98195, USA; e-mail: {\tt fanghan@uw.edu}}
}

\date{\today}

\maketitle

\vspace{-1em}

\begin{abstract}
This paper introduces a version of decoupling and randomization to establish concentration inequalities for double-indexed permutation statistics. The results yield, among other applications, a new combinatorial Hanson-Wright inequality and a new combinatorial Bennett inequality. Several illustrative examples from rank-based statistics, graph-based statistics, and causal inference are also provided.
\end{abstract}

{\bf Keywords}: concentration inequality, design-based inference, matrix correlation statistics, Chatterjee's rank correlation

\section{Introduction}

Let $N$ be a positive integer, and let $\mathcal{S}_N$ denote the symmetric group consisting of all permutations of $[N] := \{1,2,\ldots,N\}$. Throughout the paper, let $\pi$ be uniformly distributed over $\mathcal{S}_N$. Consider a fourth-order fixed tensor 
\[
\W = [w(i,j,k,\ell)] \in \Rb^{N \times N \times N \times N}.
\]
This paper focuses on the double-indexed permutation statistic (DIPS) of the form 
\begin{align}\label{eq:dips}
Q_w = 
\sum_{i,j \in [N]} w\big(i,j,\pi(i),\pi(j)\big).
\end{align}
It is known that the double-indexed sum in \eqref{eq:dips} admits a decomposition into a simple combinatorial sum and another double-indexed sum induced by a \emph{degenerate} tensor  $\D = [d(i,j,k,\ell)] \in \Rb^{N \times N \times N \times N}$  \citep{zhao1997error, barbour2005permutation}, where degenerate means that all partial averages vanish; see section \ref{sec:main} for formal definition. Specifically, define
\begin{align}\label{eq:degeneratedips}
Q_d 
= \sum_{i,j \in [N]} d\big(i,j,\pi(i),\pi(j)\big).
\end{align}
In this paper, we develop new decoupling and randomization inequalities to simplify the task of bounding the moment-generating function (MGF) of $Q_d$, $\Er\big[\exp(\lambda Q_d)\big]$. 
Based on the bounds on the MGF of $Q_d$, we derive concentration inequalities for $Q_d$, and consequently also for the original statistic $Q_w$.

Decoupling as an approach to handling complex dependent random variables can be traced back at least to \cite{burkholder1983geometric} and \cite{mcconnell1986decoupling}. It has proven particularly successful in studying U-statistics and U-processes \citep{de1992decoupling,arcones1993limit}, as thoroughly summarized in \cite{de2012decoupling}. In the context of our study, a decoupled version of $Q_d$ is
\[
\tilde Q_d = \sum_{i,j\in[N]} d(i,j,\pi(i),\tau(j)),
\]
where $\tau$ is an independent copy of $\pi$. The decoupling inequalities of interest compare $\Er[\exp(\lambda Q_d)]$ with (a variant of) $\Er[\exp(\lambda \tilde Q_d)]$, which is typically much easier to analyze.

Randomization, as a principle for handling complex stochastic systems, has been widely used in statistics and learning theory \citep{van1996weak,vapnik2013nature}. It typically involves comparing moments of the original quantity with those of a ``bootstrapped'' or randomized version. In the context of this paper, a randomization inequality concerns upper-bounding $\Er[\exp(\lambda \tilde Q_d)]$ by the corresponding MGF of
\[
 \sum_{i,j\in[N]} d(i,j,\pi(i),\tau(j))\, G_i G_j',
\]
where $G_1,G_2, \ldots,$ and $G_1',G_2', \ldots$ are independent standard Gaussian random variables, further independent of the system. Such objects are often referred to as ``Gaussian chaos'' in the literature of U-statistics and U-processes \citep{de2012decoupling}.

\subsection{Motivation}

Statistics of the form \eqref{eq:dips} are prevalent in nonparametric statistics and have also received increasing attention in causal inference. In nonparametric statistics, it has been noted that many quantities of statistical interest—including the correlation coefficients of Pearson, Kendall, Spearman, and more recently Chatterjee \citep{chatterjee2021new}, as well as discrepancy measures such as the Mann-Whitney-Wilcoxon statistic—can be expressed in this form and thus benefit from a unified treatment. In survey sampling and causal inference, such statistics have also become increasingly important within the framework of \emph{finite-population inference} \citep{lei2021Regression,han2024introduction}, where they play a role analogous to that of U-statistics in the super-population framework.

In the probability and statistical theory literature, \eqref{eq:dips} may be viewed as natural higher-order extensions of Hoeffding's statistics in the context of sampling without replacement \citep{hoeffding1951combinatorial}. The latter corresponds to the special---and in fact single-indexed---case $d(i,j,k,\ell)=a(i,k)$ for some matrix $\A=[a(i,k)]\in\Rb^{N\times N}$. 

However, unlike Hoeffding's statistics, whose stochastic behavior has been extensively studied (see, e.g., \cite{han2024introduction} for a systematic review), the literature on DIPS is substantially more limited. Notable exceptions include \cite{zhao1997error} and \cite{barbour2005permutation}, who established general Berry-Esseen bounds for $Q_w$, as well as \cite{sambale2022concentration}, who initiated the study of combinatorial concentration inequalities for multilinear forms and obtained useful results for certain special cases of $Q_w$. More general results, however, remain largely unavailable.

From a more technical perspective, although decoupling and randomization techniques have been widely used in the analysis of complex dependent stochastic systems, to the best of our knowledge there has \emph{not} been an extension of these methods to permutation statistics, whose dependence structure is both intricate and ubiquitous.

The purpose of the present work is to make simultaneous contributions to the two research directions outlined above. With respect to the first, we extend the results of \cite{sambale2022concentration} and establish general concentration inequalities for $Q_w$. To achieve this, we also contribute to the second direction by introducing new combinatorial decoupling and randomization techniques that may be useful for broader applications.

The rest of this paper is organized as follows. We begin with four examples that demonstrate the usefulness of the general theory. We then present the main, abstract results in Section \ref{sec:main}. Proofs are given in Section~\ref{sec:proofs}.

\subsection{Generalized combinatorial quadratic forms}

Let $\A=[a_{ij}]\in\Rb^{N\times N}$ be an $N\times N$ matrix, let $\mX=(X_1,\ldots,X_N)^\top$ be a random vector, and consider the quadratic form $Z=\mX^\top \A \mX$. Such random variables were first studied by \cite{hanson1971bound} in the case where $\mX$ is Gaussian, with more structured versions of $\mX$ explored in subsequent works including \cite{rudelson2013hanson}, \cite{adamczak2015note}, and \cite{sambale2022concentration}. 

In particular, \cite{sambale2022concentration} investigated combinatorial quadratic forms. Generalizing their setup, let  $\C=[c_{ij}]\in\Rb^{N\times N}$ be a \emph{doubly centered} matrix, namely,
\[
\sum_{i=1}^N c_{ik} = \sum_{j=1}^N c_{kj} = 0, \quad \text{for all } k\in[N].
\]
Our version of decoupling and randomization enables the following tail bound for the generalized combinatorial quadratic form,
\[
Q_{\sf QF} := \sum_{i,j\in[N]}c_{ij}a_{\pi(i)\pi(j)}.
\]
which coincides with the matrix correlation statistic of \citet{barbour2005permutation}.
\begin{theorem}[A combinatorial Hanson-Wright-type inequality]\label{thm:HansonWright}
Let $N\geq 20$. Assume that $\C$ is doubly centered and $\A =\{a_{ij}\}_{i,j\in[N]}$ has entries in $[-1,1]$. There then exists a universal constant $K>0$ such that the following hold true for any $t>0$.
\begin{enumerate}[itemsep=-.5ex,label=(\roman*)]
\item We have
\[
\Prb\Big(Q_{\sf QF}-\Er[Q_{\sf QF}]\geq t\Big)\leq \exp\Big\{-\frac{Kt^2}{\|\mathbf C\|_{\sf F}^2+(\norm{\C}_{\sf op}+\max_{\sigma\in\mathcal{S}_N}\|\mathbf C\circ \mathbf A^{\sigma}\|_{\sf op})t}\Big\},
\]
where $\norm{\cdot}_{\sf op}$ and $\norm{\cdot}_{\sf F}$ are the matrix spectral and Frobenius norms, respectively, $\A^{\sigma}$ denotes the matrix with entries $a_{\sigma(i)\sigma(j)}$, and ``$\circ$'' represents the Hadamard product.
\item\label{thm:HW2} If we further assume $\A$ to be positive semidefinite, then we have
\[
\Prb\Big(Q_{\sf QF}-\Er[Q_{\sf QF}]\geq t\Big)\leq \exp\left(-\frac{Kt^2}{\|\mathbf C\|_{\sf F}^2+\|\mathbf C\|_{\sf op}t}\right).
\]
\end{enumerate}
\end{theorem}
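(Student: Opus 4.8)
The plan is to apply the general concentration inequality of Section~\ref{sec:main} to the tensor $w(i,j,k,\ell)=c_{ij}a_{k\ell}$ and then reduce the abstract parameters it produces to the three quantities $\|\C\|_{\sf F}$, $\|\C\|_{\sf op}$ and $\max_{\sigma\in\mathcal{S}_N}\|\C\circ\A^{\sigma}\|_{\sf op}$. I would begin with the Hoeffding-type decomposition $Q_{\sf QF}=\Er[Q_{\sf QF}]+L+Q_d$, where $Q_d=\sum_{i,j}d(i,j,\pi(i),\pi(j))$ is induced by the degenerate tensor obtained from $c_{ij}a_{k\ell}$ by doubly centering the $\A$-factor (the $\C$-factor already being doubly centered, which is exactly what keeps the decomposition clean), and $L$ collects the diagonal contribution $\sum_i c_{ii}a_{\pi(i)\pi(i)}$ together with the degree-one (single-coordinate) projection. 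A short computation, using that $\C$ has vanishing row and column sums and that $\sum_r a_{\pi(r)\pi(r)}=\Tr\A$ for every $\pi$, shows that every term in $L$ is a bounded combinatorial (Hoeffding) statistic whose coefficients are proportional to the diagonal entries of $\C$; hence it has range $O(\|\C\|_{\sf op})$ (as $\max_i|c_{ii}|\le\|\C\|_{\sf op}$) and variance $O(\|\C\|_{\sf F}^2)$ (as $\sum_i c_{ii}^2\le\|\C\|_{\sf F}^2$ and $|a_{k\ell}|\le1$), so a Bernstein-type inequality for combinatorial sums (a classical fact in the single-indexed case) already gives it a tail of the asserted shape. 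It therefore remains to bound the MGF of $Q_d$.

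For $Q_d$ I would run the decoupling--randomization pipeline: decoupling replaces $\Er[\exp(\lambda Q_d)]$ by (a variant of) $\Er[\exp(\lambda\tilde Q_d)]$ with $\tilde Q_d=\sum_{i,j}d(i,j,\pi(i),\tau(j))$ and $\tau$ an independent copy of $\pi$, and randomization then dominates this by the MGF of the Gaussian chaos $\sum_{i,j}d(i,j,\pi(i),\tau(j))\,G_iG_j'$. Conditioning on $(\pi,\tau)$, the latter is a decoupled Gaussian bilinear form with coefficient matrix $M^{\pi,\tau}=\big[\,d(i,j,\pi(i),\tau(j))\,\big]$, so the classical Hanson-Wright bound gives $\Er_G\big[\exp\big(\lambda\sum_{i,j}M^{\pi,\tau}_{ij}G_iG_j'\big)\big]\le\exp\{c\lambda^2\|M^{\pi,\tau}\|_{\sf F}^2\}$ whenever $|\lambda|\,\|M^{\pi,\tau}\|_{\sf op}\le c'$. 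The Frobenius term is immediate: $\|M^{\pi,\tau}\|_{\sf F}^2=\sum_{i,j}d(i,j,\pi(i),\tau(j))^2\lesssim\|\C\|_{\sf F}^2$, since doubly centering $\A$ inflates its entries only by a bounded factor and $|a_{k\ell}|\le1$.

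The main obstacle will be the uniform control of $\|M^{\pi,\tau}\|_{\sf op}$, which is what has to produce the term $\|\C\|_{\sf op}+\max_\sigma\|\C\circ\A^{\sigma}\|_{\sf op}$. Writing $\tau=\pi\circ\rho$ and using that the Hadamard product commutes with simultaneous row--column permutations, $\|M^{\pi,\tau}\|_{\sf op}$ reduces, up to deleting its zero diagonal and the lower-order degenerating corrections (which are rank-one up to a $\tfrac{1}{N}$-scaled permutation of $\A$ and hence contribute only $O(\|\C\|_{\sf op})$), to $\|\hat\A^{\pi}\circ(\C\text{ with columns permuted by }\rho)\|_{\sf op}$, where $\hat\A$ is the doubly centered version of $\A$. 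Expanding $\hat\A=\A-\tfrac{1}{N}\A\mathbf{1}\mathbf{1}^{\top}-\tfrac{1}{N}\mathbf{1}\mathbf{1}^{\top}\A+\tfrac{1}{N^{2}}(\mathbf{1}^{\top}\A\mathbf{1})\mathbf{1}\mathbf{1}^{\top}$, the last three (rank at most one) corrections, once Hadamard-multiplied by $\C$, have operator norm $O(\|\C\|_{\sf op})$ because the $1/N$ and $1/N^{2}$ factors exactly cancel the $O(N)$ and $O(N^{2})$ sizes of the row, column and total sums of $\A$. For the leading piece $\|\A^{\pi}\circ(\C\text{ with columns permuted by }\rho)\|_{\sf op}$, the case $\rho=\mathrm{id}$ is exactly $\|\C\circ\A^{\pi}\|_{\sf op}\le\max_\sigma\|\C\circ\A^{\sigma}\|_{\sf op}$; the delicate point is that for general $\rho$ one must still absorb the column mismatch, which I would handle by a further symmetrization over $\rho$ that isolates its ``diagonal'' part (again bounded by $\max_\sigma\|\C\circ\A^{\sigma}\|_{\sf op}$) and estimates the remainder by $\|\C\|_{\sf op}$. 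This would give $\|M^{\pi,\tau}\|_{\sf op}\lesssim\|\C\|_{\sf op}+\max_\sigma\|\C\circ\A^{\sigma}\|_{\sf op}$ for all $(\pi,\tau)$.

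Finally, integrating the conditional MGF bound over $(\pi,\tau,G)$ would yield $\Er[\exp(\lambda Q_d)]\le\exp\{c\lambda^2\|\C\|_{\sf F}^2\}$ for $|\lambda|\big(\|\C\|_{\sf op}+\max_\sigma\|\C\circ\A^{\sigma}\|_{\sf op}\big)\le c'$; combining with the tail for $L$ and optimizing $\lambda$ in the Chernoff bound gives part~(i), the restriction $N\ge20$ being inherited from the abstract results. Part~(ii) then follows immediately: if $\A$ is positive semidefinite, so is each $\A^{\sigma}=P_\sigma\A P_\sigma^{\top}$, with diagonal entries $a_{\sigma(k)\sigma(k)}\in[0,1]$; hence, using that $\|\C\circ\mathbf{B}\|_{\sf op}\le(\max_k b_{kk})\|\C\|_{\sf op}$ whenever $\mathbf{B}$ is positive semidefinite, we obtain $\max_\sigma\|\C\circ\A^{\sigma}\|_{\sf op}\le\|\C\|_{\sf op}$, and the bound in~(i) collapses (after adjusting $K$) to the form in~(ii).
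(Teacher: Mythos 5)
Your high-level pipeline (Hoeffding-type decomposition, decoupling, Gaussian randomization, conditional Hanson--Wright, then cleanup of $V$ and $B$) mirrors the paper's structure, and your handling of Part~(ii) via the Schur-product bound $\|\C\circ\mathbf B\|_{\sf op}\le(\max_k b_{kk})\|\C\|_{\sf op}$ for PSD $\mathbf B$ is exactly what the paper does. The Frobenius-norm accounting is also fine. But there is a real gap in the decoupling step and its downstream operator-norm control.

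You take the decoupled object to be $\tilde Q_d=\sum_{i,j\in[N]}d(i,j,\pi(i),\tau(j))$ with $\pi,\tau$ two independent uniform permutations of $[N]$, and you then need a uniform bound of the form
$\big\|[\,c_{ij}\,\tilde a_{\pi(i)\tau(j)}\,]_{i,j\in[N]}\big\|_{\sf op}\lesssim\|\C\|_{\sf op}+\max_{\sigma}\|\C\circ\A^\sigma\|_{\sf op}$
for \emph{all} pairs $(\pi,\tau)$. Writing $\tau=\pi\circ\rho$, this amounts to bounding $\|\C'\circ\A^\pi\|_{\sf op}$ where $\C'$ is $\C$ with columns permuted by $\rho$, and there is no reason this should reduce to $\max_{\sigma}\|\C\circ\A^\sigma\|_{\sf op}$: the latter is a max over simultaneous row–and–column permutations of $\A$ against the \emph{un}permuted $\C$, while your quantity permutes rows and columns of $\A$ independently (equivalently, permutes columns of $\C$). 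Your plan to fix this with ``a further symmetrization over $\rho$'' is not an argument — it is the crux, and I don't see how to carry it out. Moreover, the paper never proves the naive decoupling inequality you assume (that $\Er[e^{\lambda Q_d}]\lesssim\Er[e^{c\lambda\tilde Q_d}]$ with fully independent $\pi,\tau$ on all of $[N]$); the Introduction explicitly says the paper compares to ``a variant'' of $\tilde Q_d$, and Theorem~\ref{thm:decoupling} delivers a structurally different object.

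What the paper actually does, and why it sidesteps your difficulty: Theorem~\ref{thm:decoupling} uses a random bipartition $I\dcup I^c$ of $[N]$ together with bijections $\pi_1:I\to J$ and $\pi_2:I^c\to J^c$, and the decoupled sum runs only over $i\in I$, $j\in I^c$. Because $I$ and $I^c$ are disjoint, $\pi_1$ and $\pi_2$ glue into a single permutation of $[N]$, so after randomization (Theorem~\ref{thm:randomization}) the operator-norm parameter $B_{\sf rect}$ in Lemma~\ref{lem:2d_MGF} is controlled by $\max_\sigma\|[d(i,j,\sigma(i),\sigma(j))]_{i,j}\|_{\sf op}$ — a one-permutation quantity — and the centerings in $\tilde d_{I,J}$ cost only a fixed factor. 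This is exactly how $B_d\le 3\|\C\|_{\sf op}+\max_\sigma\|\C\circ\A^\sigma\|_{\sf op}$ is obtained. Your version loses this because, with $i,j$ both ranging over $[N]$ and $\pi,\tau$ independent, the two permutations can no longer be merged. To repair your proof you would either need to establish a genuine two-permutation operator-norm bound (which appears false in general for the quantities in the theorem statement), or adopt the block-decoupling of Theorem~\ref{thm:decoupling}.

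One smaller inaccuracy: the ``degree-one (single-coordinate) projection'' in your $L$ actually vanishes identically here — $a_w\equiv 0$ because $\C$ is doubly centered — so the linear piece is absent. The diagonal piece $\sum_i c_{ii}\tilde a_{\pi(i)\pi(i)}$ does need to be split off before decoupling, and the paper does this inside Theorem~\ref{thm:main} via the Cauchy–Schwarz split in \eqref{eq:han1}; your treatment of it as a bounded combinatorial statistic is fine.
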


Note that the above bounds, resembling the classical Hanson-Wright inequality, are dimension-free in the sense that they do not explicitly depend on $N$. Moreover, the transition from subexponential to subgaussian tails depends on the relative magnitudes of $\|\C\|_{\sf F}$ and $\|\C\|_{\sf op}$, mirroring the behavior in both the classical Hanson-Wright and Bernstein-type inequalities. The appearance of the term
\[
\max_{\sigma\in\mathcal{S}_N}\|\C \circ \A^{\sigma}\|_{\sf op}
\]
in the first inequality is somewhat unfortunate, but appears to be unavoidable when working through specific examples. 

The results of \cite{sambale2022concentration} are based on modified log-Sobolev inequalities and apply only to multilinear forms with the special structure $\A=\ma\ma^\top$ for some $\ma\in\Rb^N$. They therefore correspond to a special case of Theorem~\ref{thm:HansonWright}\ref{thm:HW2}. While it may be possible to obtain bounds similar to Theorem~\ref{thm:HansonWright} using analogous log-Sobolev arguments, the tail inequalities presented here are genuinely new. For a concrete application, see Section~\ref{sec:ci}.

Finally, we also obtain a combinatorial Bennett-type inequality that does not require boundedness of the entries of $\A$ or $\C$.

\begin{theorem}[A combinatorial Bennett-type inequality]\label{thm:Bennett}
Assume that $\C$ is doubly centered and has zero diagonal entries. Then there exists a universal constant $K>0$ such that for any $t\ge 0$,
\[
\Prb\Big(Q_{\sf QF}-\Er[Q_{\sf QF}] \ge t\Big)
    \le 
    \exp\Big\{
        -\frac{t}{K \nu}\,
        \log\Big(
            1+\frac{N\nu t}{K\|\C\|_{\sf op}^2 \|\A\|_{\sf F}^2}
        \Big)
    \Big\},
\]
where
\[
\nu := 
\max_{\substack{i,j\in[N]\\ \sigma,\tilde\sigma\in\mathcal S_N}}
\left|
    \sum_{k=1}^{\lfloor N/2 \rfloor} c_{i\sigma(k)}\, a_{j\tilde\sigma(k)}
\right| , 
\]
with $\lfloor \cdot \rfloor$ denoting the floor function.
\end{theorem}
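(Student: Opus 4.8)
The plan is to apply the abstract decoupling machinery of Section~\ref{sec:main} to the fourth‑order tensor $w(i,j,k,\ell)=c_{ij}a_{k\ell}$ and then exploit its product structure to collapse the problem to a one‑sample combinatorial (sampling‑without‑replacement) statistic, for which a Bennett bound is available. First I would record that, under the hypotheses on $\C$, the statistic $Q_{\sf QF}$ is already degenerate: since $c_{ii}=0$ and $\sum_j c_{ij}=\sum_j c_{ji}=0$ for every $i$, a short computation gives $\Er[Q_{\sf QF}]=0$ and shows $\Er[Q_{\sf QF}\mid\pi(i)=k]$ is independent of $k$ for each fixed $i$, so the constant and first‑order parts of the decomposition $Q_w=(\text{simple sum})+Q_d$ vanish and $Q_{\sf QF}=Q_d$ with $d(i,j,k,\ell)=c_{ij}\widetilde a(k,\ell)$, $\widetilde a(k,\ell)=a_{k\ell}-\bar a_{k\cdot}-\bar a_{\cdot\ell}+\bar a$ (one checks $\sum_\bullet d=0$ in each of the four arguments using the two centerings of $\C$). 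Decoupling then bounds $\Er[\exp(\lambda Q_{\sf QF})]$ by a constant‑rescaled $\Er[\exp(\lambda\widetilde Q_d)]$ with $\widetilde Q_d=\sum_{i,j}d(i,j,\pi(i),\tau(j))$, $\tau$ an independent uniform permutation. Conditioning on $\tau$ and using $\sum_i c_{ij}=0$ to see that the centering of $\widetilde a$ contributes no net term, $\widetilde Q_d=\sum_i\phi_\tau(i,\pi(i))$ with $\phi_\tau(i,k)=\sum_\ell c_{i\tau^{-1}(\ell)}a_{k\ell}=(\C P_\tau\A^\top)_{ik}$ for a permutation matrix $P_\tau$; moreover $\Er_\pi[\sum_i\phi_\tau(i,\pi(i))]=0$ for every $\tau$. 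Thus it remains to prove a Bennett inequality for $\sum_i b(i,\pi(i))$ with an arbitrary cost array $b$, apply it with $b=\phi_\tau$ uniformly in $\tau$, and average.

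For the one‑sample step I would use the Doob martingale obtained by revealing $\pi(1),\pi(2),\dots$ in order; its increment at step $i$ is, with $R_i$ the remaining pool and $\bar b_j^{(R_i)}$ the mean of $b(j,\cdot)$ over $R_i$,
\[
D_i=\big(b(i,\pi(i))-\bar b_i^{(R_i)}\big)-\frac{1}{|R_i|-1}\sum_{j>i}\big(b(j,\pi(i))-\bar b_j^{(R_i)}\big),
\]
and a Freedman‑type bound yields a tail $\exp\{-\tfrac{t}{KM}\log(1+\tfrac{Mt}{V})\}$ with $M$ a uniform bound on $|D_i|$ and $V\asymp\tfrac1N\sum_{i,k}(b(i,k)-\bar b_{i\cdot})^2$. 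The variance proxy is the easy half: with $b=\phi_\tau$, $V\le\tfrac1N\sum_{i,k}\phi_\tau(i,k)^2=\tfrac1N\|\C P_\tau\A^\top\|_{\sf F}^2\le\tfrac1N\|\C\|_{\sf op}^2\|\A\|_{\sf F}^2$, using $\|XY\|_{\sf F}\le\|X\|_{\sf op}\|Y\|_{\sf F}$, $\|\C P_\tau\|_{\sf op}=\|\C\|_{\sf op}$, $\|\A^\top\|_{\sf F}=\|\A\|_{\sf F}$; this reproduces the quantity inside the logarithm of the claimed bound after absorbing constants into $K$.

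The hard part will be pinning down the scale $M$ and showing $\sup_\tau M\lesssim\nu$, that is, that the increments are governed by the \emph{half}-sum $\lfloor N/2\rfloor$ rather than a full $N$‑term sum or an $\ell_1$‑type quantity. I expect the floor to enter through the zero‑sum structure of $\C$: each row (resp.\ column) of $\C$ has at most $\lfloor N/2\rfloor$ coordinates of a given sign, and in the future‑correction term of $D_i$ one has $\sum_{j>i}c_{jm}=-\sum_{j\le i}c_{jm}$, so a partial sum over $\{j>i\}$ collapses to one over $\min(i,N-i)\le\lceil N/2\rceil$ indices; combining this sign‑partition with a rearrangement (Chebyshev‑sum) estimate should turn each relevant increment into something of the form $\pm\sum_{k=1}^{\lfloor N/2\rfloor}c_{i\sigma(k)}a_{j\tilde\sigma(k)}$, whose magnitude is at most $\nu$, with the $\bar a_{k\cdot}$ centering pieces absorbed as averages of terms of the same type. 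Carrying this out with a harmless numerical constant — and handling the parity mismatch between $\lfloor N/2\rfloor$ and $\lceil N/2\rceil$ — is the delicate point; a secondary bookkeeping task is checking that the centering terms discarded when passing to $\phi_\tau$ and the constant from the decoupling inequality are all absorbed into $K$. With the variance bound above and $M\lesssim\nu$, substituting into the one‑sample Bennett inequality, Chernoff‑optimizing, and taking the (uniform‑in‑$\tau$) expectation over $\tau$ completes the argument.
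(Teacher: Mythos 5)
Your outline identifies the right macroscopic shape — decouple, condition on the second permutation, reduce to a one‑sample combinatorial Bennett inequality, then bound the variance proxy by $\frac1N\|\C\|_{\sf op}^2\|\A\|_{\sf F}^2$ — and the variance bound $\frac1N\|\C P_\tau\A^\top\|_{\sf F}^2\le\frac1N\|\C\|_{\sf op}^2\|\A\|_{\sf F}^2$ is exactly what the paper does in Lemma~\ref{lem:Bennet_2d_decoupled}. But there are two genuine gaps in the decoupling step, which you treat as routine but is where all the work is.

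First, the decoupled object you write down, $\tilde Q_d=\sum_{i,j}d(i,j,\pi(i),\tau(j))$ with $\pi,\tau$ \emph{independent uniform permutations on $[N]$}, is not what combinatorial decoupling produces and is not established anywhere. The paper's decoupling (Theorem~\ref{thm:decoupling} and the bespoke Lemma~\ref{lem:decoupling_zero_diagonal}) replaces $\pi$ by a pair of bijections $\pi_1:I\to J$ and $\pi_2:I^c\to J^c$ between a random half/half split of $[N]$; a clean full‑size decoupling into two independent permutations over $[N]$ is not available here because two independent permutations do not respect the without‑replacement constraint that $\pi$ satisfies. Moreover, the generic degenerate‑tensor decoupling (Theorem~\ref{thm:decoupling}) carries an extraneous subgaussian prefactor $\exp(50\tilde V\lambda^2)$ that comes from the $\Delta(\pi)$ correction terms, and that prefactor would have to be addressed. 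The theorem you are proving is stated for $\C$ with zero diagonal precisely because that hypothesis makes the diagonal correction $\sum_i d(i,i,\pi(i),\pi(i))$ vanish identically, which is what lets Lemma~\ref{lem:decoupling_zero_diagonal} (a tailored decoupling for the product kernel $d(i,j,k,\ell)=a_{ij}c_{k\ell}$) produce the clean two‑term bound
\[
\Er[\varphi(Q)] \;\le\; \tfrac12\,\Er\!\left[\varphi\Big(2\alpha\textstyle\sum_{i\in I,j\in I^c}\tilde d_{I,J}(i,j,\pi_1(i),\pi_2(j))\Big)\right] + \tfrac12\,\Er\!\left[\varphi\Big(2\beta\textstyle\sum_{i\ne j}d(i,j,\pi(j),\pi(i))\Big)\right]
\]
without a Gaussian remainder. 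Your outline never uses $c_{ii}=0$ at all after the degenerization remark, so it cannot recover this structure.

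Second, your proposed mechanism for the appearance of $\lfloor N/2\rfloor$ in $\nu$ is incorrect. You claim that ``each row (resp.\ column) of $\C$ has at most $\lfloor N/2\rfloor$ coordinates of a given sign''; this is false — the zero‑sum vector $(1,1,\dots,1,-(N-1))\in\Rb^N$ has $N-1$ positive coordinates. The floor has nothing to do with the sign pattern of $\C$. It enters because the combinatorial decoupling splits $[N]$ into $I$ of size $\lceil N/2\rceil$ and $I^c$ of size $\lfloor N/2\rfloor$, so that after conditioning on $\pi_2$ the relevant ``rows'' $b_{ik}\propto\sum_{j\in I^c}\tilde d_{I,J}(i,\sigma_1(j),k,\sigma_2(j))$ contain exactly $|I^c|=\lfloor N/2\rfloor$ summands; that is how $\nu$ is matched to $|b_{ik}|\le 1$ in Lemma~\ref{lem:Bennet_2d_decoupled}. (Incidentally, even if one established a full‑size decoupling, the resulting $N$‑term scale parameter could only be reduced to the half‑length $\nu$ by absorbing a factor of two into the universal constant, not by the sign‑partition argument you sketch.)

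On the one‑sample step you genuinely diverge: you propose a Doob martingale with a Freedman‑type bound, whereas the paper proves Lemma~\ref{lem:Bennet_1d} by a truncation at level $\rho=1/(3\lambda)$, handling the large part via the modified log‑Sobolev argument of Lemma~\ref{lem:Bennet_1d_lem1} and the small part via the subgaussian Lemma~\ref{lem:Bennet_1d_lem2}. A Freedman‑type route for sampling without replacement would plausibly give the same Bennett tail and is a legitimate alternative; the martingale increments are indeed $O(\max_{i,k}|b(i,k)|)$ after the usual algebra. But this is downstream of, and cannot compensate for, the missing decoupling step above.
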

Here $\nu$ measures the largest possible absolute inner product obtainable by selecting half of the coordinates from a row of $\A$ and half of the coordinates from a row of $\C$, and then matching the selected coordinates via an arbitrary bijection. $\lfloor N/2\rfloor$ arises from the decoupling step in the proof, where the index set is split into complementary blocks of sizes $\lceil N/2\rceil$ and $\lfloor N/2\rfloor$. The maximization over $\sigma$ and $\tilde\sigma$ parametrizes all possible ways of selecting and matching these coordinates.

\subsection{Examples in nonparametric statistics}

Main targets in nonparametric statistics include quantifying the stochastic similarity between multiple samples and measuring the strength of dependence within a single sample. In a classical observation, \cite{daniels1944relation} pointed out that many such nonparametric statistics---including Spearman's rho and Kendall's tau---can be formulated in the form \eqref{eq:dips}, and thus fall within the broader family of DIPS.

More specifically, we consider the following four examples.

\begin{example}[Mann-Whitney-Wilcoxon statistic]\label{Mann-Whitney-Wilcoxon Statistic}
Let $X_1,\dots,X_m$ and $Y_1,\dots,Y_n$ be independent samples from continuous distribution functions $F_X$ and $F_Y$, respectively, and set $N=m+n$.
The Mann-Whitney-Wilcoxon statistic $U$ counts the number of pairs with $X_i<Y_j$, namely,
\[
  U = \sum_{i=1}^m \sum_{j=1}^n \ind(X_i < Y_j),
\]
with $\ind(\cdot)$ representing the indicator function.

Pool the data by defining $Z_i := X_i$ for $i\in[m]$ and $Z_{m+j} := Y_j$ for $j\in[n]$. Under $H_0 : F_X = F_Y$, we can show that $U$ has the same distribution as
\[
  Q_{\sf U}
  = \sum_{i,j\in[N]} c_{ij}^{\sf U}\, a_{\pi(i)\pi(j)}^{\sf U},
  \quad \text{with }
  c_{ij}^{\sf U} := \ind(i \le m,\ j > m),
  \quad
  a_{k\ell}^{\sf U} := \ind(Z_k < Z_\ell).
\]

Applying Corollary~\ref{cor:uncentered2} below 
and performing straightforward calculations, we obtain that there exists a universal constant $K>0$ such that, for all $t>0$,
\[
  \Prb\Big(Q_{\sf U} - \Er[Q_{\sf U}] > t\Big)
  \le
  \exp\Big(- \frac{K\, t^2}{\,Nmn + Nt\,}\Big)
  \;+\;
  \exp\Big(- \frac{K\, t^2}{\,\min\{m,n\}^2 + \min\{m,n\} t\,}\Big).
\]
\end{example}

\begin{example}[Daniels’ generalized correlation coefficient]\label{example:daniels}
Assume that $\{(X_i,Y_i)\}_{i\in[N]}$ are independent and identically distributed copies of a representative pair $(X,Y)$ with continuous marginals $F_X$ and $F_Y$.
Assign to each pair $(X_i,X_j)$ a score $c_{ij}^{\sf R}$ and to each $(Y_i,Y_j)$ a score $a_{ij}^{\sf R}$, with $c_{ij}^{\sf R}=-c_{ji}^{\sf R}$ and $a_{ij}^{\sf R}=-a_{ji}^{\sf R}$.  
Let $\C^{\sf R}=[c_{ij}^{\sf R}]$ and $\A^{\sf R}=[a_{ij}^{\sf R}]$ denote the corresponding antisymmetric score matrices.  

\citet{daniels1944relation} introduced the generalized correlation coefficient 
\[
  R := 
  \frac{
    \sum_{i,j=1}^N c_{ij} a_{ij}
  }{
    \Big( \sum_{i,j=1}^N c_{ij}^2 \Big)^{1/2}
    \Big( \sum_{i,j=1}^N a_{ij}^2 \Big)^{1/2}
  }.
\]
Under 
\begin{align}\label{eq:null-independence}
  H_0: X \text{ is independent of } Y,
\end{align}
$R$ has the same distribution as 
\[
  Q_{\sf R} 
  = \frac{\sum_{i,j\in[N]} c_{ij}^{\sf R}\, a_{\pi(i)\pi(j)}^{\sf R}}{{\left(\sum_{i,j=1}^N (c^{R}_{ij})^2\right)^{1/2}\left(\sum_{i,j=1}^N (a^{R}_{ij})^2\right)^{1/2}}}.
\]

\citet{daniels1944relation}'s framework encompasses several classical statistics as special cases, including Pearson’s  correlation, Spearman’s rank correlation $\rho$, and Kendall’s $\tau$, under suitable choices of score functions:
\begin{enumerate}[itemsep=-.5ex,label=(\roman*)]
\item Pearson's correlation coefficient ($\rho_{\sf P}$) corresponds to
\[
  c_{ij}^{\sf R} = X_i-X_j, 
  \qquad 
  a_{ij}^{\sf R} = Y_i-Y_j.
\]
\item Kendall's tau ($\rho_{\sf K}$) corresponds to
\[
  c_{ij}^{\sf R} = \operatorname{sgn}(X_i-X_j), 
  \qquad 
  a_{ij}^{\sf R} = \operatorname{sgn}(Y_i-Y_j),
\]
where $\operatorname{sgn}(x) := \ind(x>0)-\ind(x<0)$ is the sign function.

\item Spearman's rho ($\rho_{\sf S}$) corresponds to
\[
  c_{ij}^{\sf R} = r_i^x-r^x_j, 
  \qquad 
  a_{ij}^{\sf R} = r_i^y-r^y_j,
\]
where $r_i^x$ denotes the ranks of $X_i$ in the samples
$\{X_k\}_{k\in[N]}$ and  $r_i^y$ denotes the ranks of $Y_i$ in $\{Y_k\}_{k\in[N]}$.
\end{enumerate}

Under $H_0$ in \eqref{eq:null-independence}, Corollary \ref{cor:uncentered2} below implies that there exists a universal constant $K>0$ such that, for all $t>0$,
\begin{align*}
  \Prb\Big( \rho_{\sf P} - \Er[\rho_{\sf P}] \ge t \Big)
  &\le 
  \exp\Big(
    -\,\frac{
        K\, t^2
    }{
        N^{-1} 
        + 
        \dfrac{
            \max_i |X_i - \overline X| \, \max_k |Y_k - \overline Y|
        }{
            S_X S_Y
        }\,
        t
    }
  \Big),
    \\
  \Prb\Big( \rho_{\sf K} - \Er[\rho_{\sf K}] \ge t \Big)
  &\le 
  \exp\Big( - K N t^2 / (1 + t) \Big)
  + 
  \exp\Big( - K N^2 t^2 / (1 + N t) \Big),
  \\
  \Prb\Big( \rho_{\sf S} - \Er[\rho_{\sf S}] \ge t \Big)
  &\le 
  \exp\Big( - K N t^2 / (1 + t) \Big),
\end{align*}
where
\[
  \overline X := N^{-1} \sum_{i=1}^N X_i, 
  \qquad 
  \overline Y := N^{-1} \sum_{i=1}^N Y_i,
\]
and
\[
  S_X := \Big( \sum_{i=1}^N (X_i - \overline X)^2 \Big)^{1/2},
  \qquad
  S_Y := \Big( \sum_{i=1}^N (Y_i - \overline Y)^2 \Big)^{1/2}.
\]
\end{example}

\begin{example}[Chatterjee's rank correlation]\label{example:chatterjee}
\citet{chatterjee2021new} has recently introduced a rank correlation as an appealing alternative to the correlation coefficients in Example~\ref{example:daniels}; see, e.g., \cite{dette2013copula}, \cite{han2021extensions}, \cite{chatterjee2024survey}, and \cite{lin2022limit} for more discussions.  
Under \eqref{eq:null-independence}, 
Chatterjee's rank correlation has the same distribution as
\[
  \xi_N
  = 1 - \frac{3}{N^2 - 1} \sum_{i=1}^{N-1} \big| \pi(i+1) - \pi(i) \big|.
\]
Taking $c_{ij}^{\sf\xi} := \ind(i-j=1)$ and $a_{k\ell}^{\sf\xi} := \big| k - \ell \big|$, we can rewrite $\xi_N$ as
\[
  \xi_N
  = 1 - \frac{3}{N^2 - 1}
    \sum_{i,j\in[N]} c_{ij}^{\sf\xi} \, a_{\pi(i)\pi(j)}^{\sf\xi}.
\]
Thus, Chatterjee’s rank correlation admits a representation as a combinatorial quadratic form as well and its stochastic properties under $H_0$ in  \eqref{eq:null-independence} can be analyzed analogously.

More specifically, applying Corollary~\ref{cor:uncentered2}, we obtain the existence of a universal constant $K>0$ such that, for all $t>0$,
\[
  \Prb\big( \xi_N - \Er[\xi_N] \ge t \big)
  \le 
  \exp\Big( - \frac{K N^2 t^2}{1 + Nt} \Big)
  \;+\;
  \exp\Big( - \frac{K N t^2}{1 +\, t} \Big).
\]
\end{example}

\begin{example}[Friedman and Rafsky's graph correlation]
\citet{friedman1983graph} introduced a graph-based correlation coefficient to extend classical bivariate correlation measures to general metric spaces via the use of similarity graphs.

More specifically, let $G_x(V,E_x)$ and $G_y(V,E_y)$ be two graphs constructed from $\{X_i\}_{i\in[N]}$ and $\{Y_i\}_{i\in[N]}$, respectively, to capture within-sample similarity (e.g., via $k$-nearest neighbor graphs or minimum spanning trees). A measure of dependence between $\{X_i\}_{i\in[N]}$ and $\{Y_i\}_{i\in[N]}$ is then defined as
\[
  \Gamma
  = \sum_{i,j\in[N]} \ind\big((i,j)\in E_x \text{ and } (i,j)\in E_y\big).
\]
Under $H_0$ in  \eqref{eq:null-independence}, we can show that $\Gamma$ has the same distribution as
\[
  Q_{\sf \Gamma}
  = \sum_{i,j\in[N]} c_{ij}^{\sf \Gamma} \, a_{\pi(i)\pi(j)}^{\sf \Gamma},
\]
where
\[
  c_{ij}^{\sf \Gamma} := \ind\big((i,j)\in E_x\big),
  \qquad
  a_{ij}^{\sf \Gamma} := \ind\big((i,j)\in E_y\big).
\]
Introduce $\widetilde \C = [\tilde c_{ij}]$ and $\widetilde \A = [\tilde a_{ij}]$ to be the doubly centered versions of $[c_{ij}^{\sf\Gamma}]$ and $[a_{ij}^{\sf\Gamma}]$, defined by
\[
  \tilde c_{ij}
  := c_{ij}^{\sf\Gamma}
     - \frac{1}{N}\sum_{k=1}^N c_{ik}^{\sf\Gamma}
     - \frac{1}{N}\sum_{k=1}^N c_{kj}^{\sf\Gamma}
     + \frac{1}{N^2} \sum_{k,\ell=1}^N c_{k\ell}^{\sf\Gamma},
\]
and similarly for $\tilde a_{ij}$.  In addition, let $\widetilde{\sf d}_x(i)$ and $\widetilde{\sf d}_y(i)$ denote the centered degrees of vertex $i$ in $G_x$ and $G_y$, respectively:
\[
  \widetilde{\sf d}_x(i)
  := {\sf d}_x(i) - \frac{1}{N}\sum_{k=1}^N {\sf d}_x(k),
  \qquad \text{with }
  {\sf d}_x(i) := \big| \{ j : (i,j)\in E_x \} \big|,
\]
and an analogous definition for $\widetilde{\sf d}_y(i)$.

Applying Corollary~\ref{cor:uncentered2} below then yields the following tail bound
\[
  \Prb\Big( Q_{\sf \Gamma} - \Er[Q_{\sf \Gamma}] \ge t \Big)
  \le 
  \exp\Big( - \frac{K t^2}{{\sf V}_a + {\sf B}_a t} \Big)
  + 
  \exp\Big( - \frac{K t^2}{{\sf V}_d + {\sf B}_d t} \Big),
\]
where
\begin{align*}
    {\sf V}_a 
    &:= \frac{1}{N^3}
        \Big( \sum_{i=1}^N \widetilde{\sf d}_x(i)^2 \Big)
        \Big( \sum_{i=1}^N \widetilde{\sf d}_y(i)^2 \Big),\\[0.6ex]
    {\sf B}_a 
    &:= \frac{1}{N}
        \max_{i\in[N]} \big| \widetilde{\sf d}_x(i) \big|
        \max_{i\in[N]} \big| \widetilde{\sf d}_y(i) \big|,\\[0.6ex]
    {\sf V}_d 
    &:= \frac{1}{N^5}
        \Big( \sum_{i=1}^N \widetilde{\sf d}_x(i)^2 \Big)
        \Big( \sum_{i=1}^N \widetilde{\sf d}_y(i)^2 \Big) \\[0.6ex]
    &\quad
      + \frac{1}{N^2}
        \Big(
            |E_x|
            - \frac{1}{N}\sum_{i=1}^N \widetilde{\sf d}_x(i)^2
            - \frac{2|E_x|^2}{N^2}
        \Big)
        \Big(
            |E_y|
            - \frac{1}{N}\sum_{i=1}^N \widetilde{\sf d}_y(i)^2
            - \frac{2|E_y|^2}{N^2}
        \Big),\\[0.6ex]
    {\sf B}_d
    &:= \max_{\sigma\in \mathcal{S}_N} \Big\| \tilde{\C} \circ \tilde{\A}^{\sigma} \Big\|_{\sf op}.
\end{align*}
\end{example}

\subsection{An example in causal inference}\label{sec:ci}

Regression adjustment is widely used to improve efficiency in randomized experiments. \cite{lei2021Regression} studied this method in completely
randomized experiments with a diverging number of covariates and derived a decomposition of the regression-adjusted estimator
$\hat\tau_{\mathrm{adj}}$ proposed by \citet{lin2013agnostic}.
In the decomposition, the bias terms appear in quadratic forms.

More specifically, consider a completely randomized experiment with
$N$ units and a covariate matrix $\mathbf X\in\mathbb{R}^{N\times p}$.  For each treatment arm $\omega\in\{0,1\}$, let $\bm Y(\omega)\in\mathbb{R}^N$ denotes the
potential outcome vector, and let $\bm e(\omega)=(e_1(\omega),\dots,e_N(\omega))^\top$ be the
population regression residuals obtained by regressing $\bm Y(\omega)$ on the covariates. Let
\[
  \mathbf H := \mathbf X (\mathbf X^\top \mathbf X)^{-1} \mathbf X^\top = [h_{ij}] \in \mathbb{R}^{N\times N}
\]
be the hat matrix, and define
\[
  \mathbf Q(\omega)= \big[ q_{ij}(\omega) \big]_{i,j\in[N]} := \mathbf H\,\mathrm{diag}\big(\bm e(\omega)\big),
\]
where $\operatorname{diag}(\bm e(\omega))$ denotes the $N\times N$ diagonal matrix with diagonal entries $e_1(\omega),\dots,e_N(\omega)$. Let 
\[
T_\omega \subset [N] ~~\text{with a fixed size } N_\omega := |T_{\omega}| ~~\text{and proporstion } p_\omega := N_\omega/N
\]
be the set of units assigned to the treatment level $\omega$.  The bias of $\hat\tau_{\mathrm{adj}}$ can
then be written as a function of
\[
  S_{T_\omega} := \sum_{i,j \in T_\omega} q_{ij}(\omega)
\]
for each arm $\omega\in\{0,1\}$.

Under complete randomization, $S_{T_\omega}$ has the same distribution as the following DIPS: 
\[
  Q_{T_\omega}
  = \sum_{i,j \in [N]} c_{ij}^{\sf T_\omega} \, a_{\pi(i)\pi(j)}^{\sf T_\omega},
  \qquad \text{with }
  c_{ij}^{\sf T_\omega} := \ind( i \le N_\omega,\ j \le N_\omega ) \text{ and }
  a_{ij}^{\sf T_\omega} := q_{ij}(\omega).
\]
Following \cite{lei2021Regression}, we assume $\mathbf X$ to be column-centered, which ensures that both $\mathbf{H}$ and $\mathbf Q(\omega)$ are doubly centered. Applying Corollary~\ref{cor:uncentered2}, we can show that there exists a universal constant $K>0$ such that, for any $t>0$, 
\begin{align*}
  \Prb\big(Q_{T_\omega} - \Er [Q_{T_\omega}] \ge t\big)
  &\le
  \exp\left(
    - \frac{K t^2}{ p_1p_0 \|\mathbf Q(\omega)\|_{\sf F}^2
       +   \max\{p_1^2, p_0^2\} 
         \|\mathbf Q(\omega)\|_{\sf op}t}
  \right) 
  \\
  &\le
  \exp\left(
    - \frac{K t^2}{ p_1p_0 \sum_{i=1}^N e_i(\omega)^2 \max_{1\leq i\leq N} h_{ii} 
       +   \max\{p_1^2, p_0^2\} 
         \max_{1\leq i\leq N} | e_i(\omega) | t}
  \right) ,
\end{align*}
where the second inequality follows from
$
\|\Q(\omega)\|_{\sf F}^2 
= \sum_{i=1}^N e_i(\omega)^2 h_{ii} 
\leq \sum_{i=1}^N e_i(\omega)^2 \max_{1\leq i\leq N} h_{ii} $ by \citet[][Lemma A.10]{lei2021Regression} and $\|\mathbf Q(\omega)\|_{\sf op} 
\leq \| \mathbf H \|_{\sf op} \| \mathrm{diag}\big(\bm e(\omega)\big) \|_{\sf op}
= \max_{1\leq i\leq N} | e_i(\omega) |$. 

\section{Main results}\label{sec:main}

We begin by introducing some necessary terminology for tensors and DIPS.  
For any general fourth-order tensor $\W = [w(i,j,k,\ell)] \in \Rb^{N\times N\times N\times N}$, define
\begin{align*} 
w(i,j,k,\cdot) &:= \frac{1}{N}\sum_{\ell=1}^{N} w(i,j,k,\ell),\quad w(i,j,\cdot,\cdot) := \frac{1}{N^2} \sum_{k,\ell=1}^{N}w(i,j,k,\ell),\\ 
w(i,\cdot,\cdot,\cdot)&:= \frac{1}{N^3} \sum_{j,k,\ell=1}^{N} w(i,j,k,\ell),\quad w(\cdot,\cdot,\cdot,\cdot) := \frac{1}{N^4} \sum_{i,j,k,\ell=1}^{N} w(i,j,k,\ell), 
\end{align*}
with other partial averages defined analogously.

We call a fourth-order tensor $\W$ \emph{partly degenerate} if at least one of its partial averages is zero. A partly degenerate kernel is further said to be
\emph{degenerate} if all partial averages are zero, i.e., for all $i,j,k,\ell\in[N]$,
\[
  w(i,j,k,\cdot)
  = w(i,j,\cdot,\ell)
  = w(i,\cdot,k,\ell)
  = w(\cdot,j,k,\ell)
  = 0.
\]
In what follows, we reserve the notation $\D = [d(i,j,k,\ell)]$ for partly degenerate tensors and use $\W$ to denote a general (not necessarily degenerate) tensor.

A statistic $Q_w$ of the form \eqref{eq:dips} can always be decomposed into the sum of a single-indexed term and a degenerate DIPS, analogous to the classical Hoeffding decomposition for U-statistics. More precisely,
\begin{align}\label{eq:decomp}
&\sum_{i,j\in[N]} w(i,j,\pi(i),\pi(j))
  - \Er\Big[\sum_{i,j\in[N]} w(i,j,\pi(i),\pi(j))\Big] \notag\\
=& 
  N \left(\sum_{i=1}^N a_w\big(i,\pi(i)\big)-\Er\left[\sum_{i=1}^N a_w\big(i,\pi(i)\big)\right]\right)\notag
  \\
&\quad+ \sum_{i,j\in[N]} d_w\big(i,j,\pi(i),\pi(j)\big)-\Er\left[\sum_{i,j\in[N]} d_w\big(i,j,\pi(i),\pi(j)\big)\right],
\end{align}
where, for any $i,j,k,\ell\in[N]$,
\begin{align}\label{eq:aw}
  a_w(i,j)
  :=&\;
  w(i,\cdot,j,\cdot)
  - w(i,\cdot,\cdot,\cdot)
  - w(\cdot,\cdot,j,\cdot)
  + w(\cdot,\cdot,\cdot,\cdot) \notag\\
  &\quad
  + w(\cdot,i,\cdot,j)
  - w(\cdot,i,\cdot,\cdot)
  - w(\cdot,\cdot,\cdot,j)
  + w(\cdot,\cdot,\cdot,\cdot),
\end{align}
and
\begin{align}\label{eq:dw}
  d_w(i,j,k,\ell)
  :=&\;
  w(i,j,k,\ell)
  - w(i,\cdot,\cdot,\cdot)
  - w(\cdot,j,\cdot,\cdot)
  - w(\cdot,\cdot,k,\cdot)
  - w(\cdot,\cdot,\cdot,\ell) \notag\\[0.4ex]
  &\quad
  + w(i,j,\cdot,\cdot)
  + w(i,\cdot,k,\cdot)
  + w(i,\cdot,\cdot,\ell)
  + w(\cdot,j,k,\cdot)
  + w(\cdot,j,\cdot,\ell)
  + w(\cdot,\cdot,k,\ell)\notag\\[0.4ex]
  &\quad
  - w(i,j,k,\cdot)
  - w(i,j,\cdot,\ell)
  - w(i,\cdot,k,\ell)
  - w(\cdot,j,k,\ell)\notag\\[0.4ex]
  &\quad
  + w(\cdot,\cdot,\cdot,\cdot).
\end{align}

In light of the decomposition in \eqref{eq:decomp}, degenerate DIPS plays the central role in determining the stochastic behavior of general DIPS. The next two theorems provide the decoupling and randomization inequalities for such degenerate DIPS. 

\begin{theorem}[Combinatorial decoupling] \label{thm:decoupling}
Let $N\geq 20$ and $\mathbf{D}=[d(i,j,k,\ell)]\in\Rb^{N\times N\times N\times N}$ be a degenerate tensor. Let $I$ and $J$ be independently and uniformly distributed over all subsets of $[N]$ with size $N_1 = \lceil N/2\rceil$, where $\lceil \cdot\rceil$ represents the ceiling function. Let $\pi_1$ be uniformly distributed over all bijections from $I$ to $J$ and $\pi_2$ be uniformly distributed over all bijections from $I^c$ to $J^c$, independent of $\pi_1$. Then, for any $\lambda\geq 0$,
$$\begin{aligned}
&\Er\left[\exp\left(\lambda\left(\sum_{i\ne j} d(i,j,\pi(i),\pi(j))-\Er\left[\sum_{i\ne j} d(i,j,\pi(i),\pi(j))\right]\right)\right)\right]\\
\leq& \exp\left(50\tilde V\lambda^2\right)\Big(\Er \Big[\exp\Big\{9\lambda\Big(\sum_{i\in I,j\in I^c}\tilde{d}_{I,J}(i,j,\pi_1(i),\pi_2(j))\Big)\Big\}\Big]\Big)^{1/2},
\end{aligned}$$
where $\tilde d_{I,J}$ is the centralized version of $d$ restricted over the arrays $(i,j,k,\ell)\in I\times I^c\times J\times J^c$, explicitly defined in \eqref{eq:tilded-IJ}, and 
\begin{equation}
\label{eq:off-diagonal variance}
\tilde V := \frac{1}{N^2}\sum_{i,j,k,\ell\in[N]}d(i,j,k,\ell)^2.
\end{equation}
\end{theorem}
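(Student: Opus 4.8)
Write $Q_0:=\sum_{i\ne j}d(i,j,\pi(i),\pi(j))$. The plan is to run the classical decoupling scheme for degenerate $U$-statistics with two substitutions: the i.i.d.\ Bernoulli selectors are replaced by the random partition $(I,J)$, and the independent copy of the sampling mechanism is replaced by the independent pair of bijections $(\pi_1,\pi_2)$. Concretely, I would (i) symmetrize $d$ and encode $\pi$ through the partition, (ii) use Jensen's inequality to bound the MGF of $Q_0-\Er[Q_0]$ by the MGF of a multiple of a single off-diagonal block, (iii) peel off from that block its genuinely degenerate double-indexed part, which is exactly $\tilde S:=\sum_{i\in I,\,j\in I^c}\tilde d_{I,J}(i,j,\pi_1(i),\pi_2(j))$, leaving a lower-order remainder, and (iv) split the remainder by Hölder's inequality and absorb it into the factor $\exp(50\tilde V\lambda^2)$.

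\textbf{Encoding and Jensen.} Since $Q_0$ is unchanged when $d$ is replaced by $\bar d(i,j,k,\ell):=\tfrac12\big(d(i,j,k,\ell)+d(j,i,\ell,k)\big)$, and $\tfrac1{N^2}\sum_{i,j,k,\ell}\bar d(i,j,k,\ell)^2\le\tilde V$, I may assume $d(i,j,k,\ell)=d(j,i,\ell,k)$; degeneracy is preserved. The map $(I,J,\pi_1,\pi_2)\mapsto(\sigma,I)$, where $\sigma$ agrees with $\pi_1$ on $I$ and with $\pi_2$ on $I^c$ and $J=\sigma(I)$, is a bijection onto (uniform permutation, uniform $N_1$-subset); hence, introducing an independent uniform $I$, $(\pi,I)\overset{d}{=}(\sigma,I)$. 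Splitting the sum defining $Q_0$ by the block pattern of $(i,j)$ relative to $I$ and using the symmetry of $d$, the two off-diagonal blocks coincide and equal $S_{II^c}:=\sum_{i\in I,j\in I^c}d(i,j,\pi(i),\pi(j))$, while $\Er_I[S_{II^c}]=\tfrac{\kappa}{2}Q_0$ with $\kappa:=\tfrac{2N_1(N-N_1)}{N(N-1)}\in(\tfrac12,1)$ for $N\ge20$. Applying Jensen's inequality (legitimate since $\lambda\ge0$) to the identity $Q_0-\Er[Q_0]=\Er_I\big[\tfrac2\kappa S_{II^c}-\Er[Q_0]\big]$ and then the distributional identity above,
\[
\Er\Big[\exp\big(\lambda(Q_0-\Er[Q_0])\big)\Big]\ \le\ \Er\Big[\exp\Big(\tfrac{2\lambda}{\kappa}\sum_{i\in I,\,j\in I^c}d(i,j,\pi_1(i),\pi_2(j))-\lambda\Er[Q_0]\Big)\Big],\qquad \tfrac2\kappa<4 .
\]

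\textbf{Peeling and Hölder.} Conditioning on $(I,J)$, expand $d$ restricted to $I\times I^c\times J\times J^c$ in its Hoeffding-type decomposition over these index sets (the top, fully degenerate term being $\tilde d_{I,J}$) and sum each component against $(i,j,\pi_1(i),\pi_2(j))$. Because $\pi_1:I\to J$ and $\pi_2:I^c\to J^c$ are bijections, repeated use of the degeneracy relations $\sum_\ell d(i,j,k,\ell)=0$, etc., collapses all components except $\tilde S$, two single-index permutation statistics $H_1=\sum_{i\in I}g_1(i,\pi_1(i))$ and $H_2=\sum_{j\in I^c}g_2(j,\pi_2(j))$ with row/column-centered kernels, and a partition-only term $C(I,J)=\tfrac1{N_1(N-N_1)}\sum_{i,j\in I,\,k,\ell\in J}d(i,j,k,\ell)$. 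Since $\tilde S,H_1,H_2$ are conditionally centered, $\Er[Q_0]=\tfrac2\kappa\Er[C]$, so the exponent on the right-hand side above equals (in law) $\tfrac{2\lambda}{\kappa}\big(\tilde S+H_1+H_2+(C-\Er C)\big)$. Now Hölder's inequality with exponent $p_0=\tfrac{9\kappa}{2}$ on $\tilde S$ and $p_1=p_2=p_3=\tfrac{3}{1-2/(9\kappa)}$ on the other terms (so $\sum1/p_i=1$) gives, for the $\tilde S$-factor, coefficient $p_0\cdot\tfrac2\kappa=9$; and since $p_0\ge2$ for $N\ge20$ while $\Er[\exp(9\lambda\tilde S)]\ge1$, this factor is at most $\big(\Er[\exp(9\lambda\tilde S)]\big)^{1/2}$. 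Each of the remaining three factors has an exponent that is $O(\lambda)$ times a mean-zero statistic---$H_1,H_2$ are combinatorial Hoeffding-type statistics of a uniform bijection and $C-\Er C$ is a degenerate bilinear fluctuation in the two independent random subsets---so each is at most $\exp(c\,\lambda^2\tilde V)$ by the appropriate combinatorial sub-Gaussian/Bernstein MGF estimate, with variance proxies controlled by $\tilde V$ via Cauchy--Schwarz and the degeneracy relations; collecting the ($N\ge20$)-bounded constants $p_i\cdot\tfrac2\kappa$ produces $\exp(50\tilde V\lambda^2)$.

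\textbf{Main obstacle.} The conceptual work is in the encoding/peeling steps; the delicate part is the quantitative control of $H_1+H_2+(C-\Er C)$. In particular one must (a) read the kernels $g_1,g_2$ off the restricted Hoeffding decomposition and check that $\sum g_1^2$ and $\sum g_2^2$---hence the sub-Gaussian proxies of $H_1,H_2$---are $\lesssim\tilde V$, and (b) bound $\Er[\exp(\theta(C-\Er C))]$ for \emph{all} $\theta\ge0$ by $\exp(c\,\theta^2\tilde V)$; this last point is a Hanson--Wright-type estimate for a degenerate bilinear form in sampling-without-replacement indicators, and getting the scaling governed by $\tilde V$ (rather than, say, $N\tilde V$) requires exploiting degeneracy carefully rather than a crude bounded-differences bound. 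Finally, propagating all numerical constants to land precisely at $50$ and $9$ is tedious but routine.
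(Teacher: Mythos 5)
The first half of your plan (the random partition encoding, the distributional identity $(\pi,I)\stackrel{d}{=}(\sigma,I)$ with $J=\sigma(I)$, and Jensen applied to $Q_0-\Er[Q_0]=\Er_I\big[\tfrac2\kappa S_{II^c}-\Er[Q_0]\big]$, with $\tfrac2\kappa\le 4$ for $N\ge 20$) is a legitimate route to the decoupled object and matches the paper in spirit. The gap is in the ``peeling and H\"older'' step, and it is not a tedium-of-constants issue: the structural claim that $H_1,H_2$, and $C-\Er C$ are sub-Gaussian with variance proxy $\lesssim\tilde V$ uniformly in $\lambda\ge 0$ is false.

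\paragraph{Why the remainder bound fails.} After you condition on $(I,J)$ the Hoeffding pieces of the restricted tensor are
\[
d=d_0+(d_1-d_0)+(d_2-d_0)+\tilde d_{I,J},\qquad
g_1(i,k)=(N-N_1)\big(d_1(i,k)-d_0\big),\ \ H_1=\sum_{i\in I}g_1(i,\pi_1(i)),
\]
and analogously for $H_2$. The amplification factor $N-N_1\asymp N/2$ is intrinsic: it comes from summing the rank-one projection over $j\in I^c$. For a degenerate rank-one tensor $d(i,j,k,\ell)=a_ib_jc_ke_\ell$ with all four vectors centered and $O(1)$ entries, one has $\tilde V\asymp N^2$ while
\[
d_1(i,k)=a_ic_k\,\frac{\big(\sum_{I^c}b\big)\big(\sum_{J^c}e\big)}{|I^c|\,|J^c|},
\]
which is $\Theta(1)$ for partitions $I,J$ aligned with $b,e$. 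Then $g_1\asymp N$ and the natural variance proxy of $H_1$ is $\tfrac1{N_1}\sum_{i\in I,k\in J}g_1(i,k)^2\asymp N^3\asymp N\,\tilde V$, an extra factor of $N$; the H\"older factor for $H_1$ (with your $p_1\asymp 5$ and coefficient $\tfrac{2p_1}{\kappa}\asymp 20$) would deliver $\exp(c\,N\,\tilde V\,\lambda^2)$, not $\exp(c\,\tilde V\,\lambda^2)$. The partition-only term is equally problematic: in the same example $C\propto(\sum_I a)(\sum_I b)(\sum_J c)(\sum_J e)$, a degenerate fourth-order chaos in sampling-without-replacement indicators; establishing $\Er[e^{\theta(C-\Er C)}]\le e^{c\theta^2\tilde V}$ for \emph{all} $\theta\ge 0$ is essentially a Hanson--Wright-type combinatorial bound in its own right, i.e.\ of the same difficulty as the result you are trying to prove, and moreover bounded kernels only give Bernstein (sub-exponential), not sub-Gaussian, tails, so the claimed bound for all $\lambda$ cannot hold.

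\paragraph{The structural difference vs.\ the paper.} The paper conditions on $\pi$ and takes the expectation over the random partition $I$ (with $J=\pi(I)$) \emph{inside} the centering identity. That averaging annihilates all $I$-dependence, so the difference between $\tfrac1\alpha\big(Q_0-\Er Q_0\big)$ and $\Er\big[\tilde d_{I,\pi(I)}(\ldots)\mid\pi\big]$ is a remainder $\Delta(\pi)$ depending on $\pi$ alone, consisting of (i) the ``transposed'' DIPS $\sum_{i\ne j}d(i,j,\pi(j),\pi(i))$ carrying a prefactor $\asymp1/N^2$, hence deterministically bounded in terms of $\tilde V$ and dispatched by $e^x\le x+e^{x^2}$, and (ii) the diagonal term $\sum_i d(i,i,\pi(i),\pi(i))$ carrying a prefactor $\asymp1/N$, handled by the Gaussian randomization Lemma~\ref{lem:1d}, both of which land inside $\exp\!\big(O(\tilde V\lambda^2)\big)$. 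Your plan, by contrast, conditions on $(I,J)$ \emph{after} Jensen, so the remainders retain $O(N)$ amplification and genuine $(I,J)$-dependence; there is no tiny prefactor to cancel the blow-up. Fixing this would require re-averaging over $(I,J)$ before estimating moments, which is exactly the restructuring the paper performs; as written, the proposal does not prove the theorem.

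\paragraph{Minor points.} Your WLOG symmetrization is fine (it decreases $\tilde V$ and preserves degeneracy), but the paper does not need it because it keeps the transposed sum as a separate $\Delta_1$. Also note that the paper does not perform a restricted Hoeffding decomposition at all: it subtracts the $\tilde d_{I,J}$-averages term by term and identifies the conditional expectation of the resulting expression, which sidesteps the need to read off $g_1,g_2$ explicitly.
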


\begin{theorem}[Combinatorial randomization]\label{thm:randomization}
Let $N,M$ be two positive integers and $\pi,\tau$ be two independent random permutations uniformly distributed over $\mathcal{S}_N$ and $\mathcal{S}_M$, respectively. Consider 
\[
\D=[d(i,j,k,\ell)]\in \Rb^{N\times M \times N \times M} 
\]
to be a (possibly random) tensor independent of $(\pi,\tau)$ satisfying $d(\cdot,j,\cdot,\ell)=d(i,\cdot,k,\cdot)=0$ for any $i,k\in[N]$ and $j,\ell\in[M]$. 
Then, for any $\lambda \geq 0$,
\begin{align*}
&\Er\Bigg\{\exp\Bigg(\lambda\sum_{\substack{i\in [N]\\j\in[M]}}  d(i,j,\pi(i),\tau(j))\Bigg)\Bigg\}
\leq \Er\Bigg\{\exp\Bigg(12\lambda\sum_{\substack{i\in [N]\\j\in[M]}}  d(i,j,\pi(i),\tau(j)) G_i G'_j\Bigg)\Bigg\},
\end{align*}
where $G_1, G_2, \ldots,G_1', G_2', \ldots$ are independent standard Gaussian random variables further independent of the system.
\end{theorem}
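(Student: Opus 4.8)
\emph{Overview.} The plan is to derive the two-permutation bound from two successive applications of a one-permutation randomization lemma, the latter being obtained through an exchangeable-pair moment inequality. First, I would upgrade the hypotheses $d(\cdot,j,\cdot,\ell)=d(i,\cdot,k,\cdot)=0$ to \emph{full degeneracy}: $\sum_i d(i,j,k,\ell)=\sum_j d(i,j,k,\ell)=\sum_k d(i,j,k,\ell)=\sum_\ell d(i,j,k,\ell)=0$ for every choice of the three free indices. As in the Hoeffding-type projection \eqref{eq:dw}, one subtracts from $d$ the one- and two-index partial averages; because $\pi$ and $\tau$ are bijections, each subtracted term evaluated along the random maps telescopes into a multiple of a vanishing average — e.g. $\sum_i d(\cdot,j,\pi(i),\tau(j))=N\,d(\cdot,j,\cdot,\tau(j))=0$ — so $\sum_{i,j}d(i,j,\pi(i),\tau(j))$ is unchanged, while the new tensor is fully degenerate and remains independent of $(\pi,\tau)$; one checks that the corrections in the $(1,3)$- and $(2,4)$-slots do not interfere. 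Finally, condition on $\D$, after which $\pi,\tau$ and the two Gaussian families are mutually independent.

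\emph{Reduction to one permutation.} Conditionally on $\tau$, write $\sum_{i,j}d(i,j,\pi(i),\tau(j))=\sum_i a_\tau(i,\pi(i))$ with $a_\tau(i,k):=\sum_j d(i,j,k,\tau(j))$; full degeneracy forces $\sum_k a_\tau(i,k)=\sum_i a_\tau(i,k)=0$, so $a_\tau$ is doubly centered. Applying the one-permutation randomization lemma (stated below) introduces multipliers $(G_i)$ and gives, conditionally on $\tau$, $\Er[\exp(\lambda\sum_i a_\tau(i,\pi(i)))]\le \Er[\exp(c\lambda\sum_{i,j}d(i,j,\pi(i),\tau(j))G_i)]$. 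Then, conditionally on $\pi$ and $(G_i)$, write $\sum_{i,j}d(i,j,\pi(i),\tau(j))G_i=\sum_j b_{\pi,G}(j,\tau(j))$ with $b_{\pi,G}(j,\ell):=\sum_i d(i,j,\pi(i),\ell)G_i$, again doubly centered in $(j,\ell)$ by full degeneracy; a second application introduces $(G'_j)$. Composing the two steps and tracking the (common) constant of the lemma yields the stated factor $12$.

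\emph{The one-permutation lemma.} The core statement is: for a uniform $\pi\in\mathcal S_n$ and a doubly centered (possibly random, independent) array $a$,
\[
\Er\exp\!\Big(\lambda\sum_i a(i,\pi(i))\Big)\le \Er\exp\!\Big(c\lambda\sum_i a(i,\pi(i))G_i\Big)=\Er_\pi\exp\!\Big(\tfrac{c^2\lambda^2}{2}\,\|a_\pi\|_2^2\Big),\qquad\lambda\ge 0,
\]
where $\|a_\pi\|_2^2:=\sum_i a(i,\pi(i))^2$. I would run Chatterjee's exchangeable-pair concentration scheme on $W=\sum_i a(i,\pi(i))$ with $\pi'=\pi\circ(IJ)$ for a uniform transposition $(I,J)$: double centering gives $\Er[W-W'\mid\pi]=\tfrac{2}{n-1}W$, and expanding the conditional variance, using that every row and column of $a$ sums to zero together with $W^2\le n\|a_\pi\|_2^2$, bounds the Stein term $v(\pi):=\tfrac{n-1}{4}\Er[(W-W')^2\mid\pi]$ by $C_1\|a_\pi\|_2^2+C_2\,\Er_\pi\|a_\pi\|_2^2$. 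The resulting differential inequality $(\log\Er e^{\lambda W})'\le C_1\lambda\,\Er[\|a_\pi\|_2^2 e^{\lambda W}]/\Er[e^{\lambda W}]+C_2\lambda\,\Er_\pi\|a_\pi\|_2^2$, integrated and compared with $\log\Er_\pi\exp(\tfrac{c^2\lambda^2}{2}\|a_\pi\|_2^2)\ge\tfrac{c^2\lambda^2}{2}\Er_\pi\|a_\pi\|_2^2$ (Jensen), immediately covers small $\lambda$ and the regime where $\|a_\pi\|_2^2$ concentrates (there $W$ is sub-Gaussian with proxy $\asymp \|a\|_{\sf F}^2/n=\Er_\pi\|a_\pi\|_2^2$). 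For the complementary regime I would split the entries of $a$ at a level of order $1/\lambda$: the truncated part is bounded, hence sub-Gaussian with the right proxy, while the events making the large part of $e^{\lambda W}$ sizeable force $\pi$ onto large entries, where $\|a_\pi\|_2^2$ is correspondingly large, so the self-normalizing right-hand side absorbs the sub-exponential tail. (Alternatively, a modified log-Sobolev inequality on $\mathcal S_n$, in the spirit of \cite{sambale2022concentration}, should yield the same self-normalized control.)

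\emph{Main obstacle.} The delicate point is exactly this last comparison: showing, uniformly in $\lambda\ge0$, that the self-normalized Gaussian-chaos moment generating function $\Er_\pi\exp(c\lambda^2\|a_\pi\|_2^2)$ dominates the sub-exponential blow-up of $\Er_\pi\exp(\lambda W)$. The naive trick of restricting $\pi$ to the permutations that hit one fixed large entry loses a factor $n$ that is unaffordable when $\|a\|_{\sf F}^2/n\gg\max_{i,k}a(i,k)^2$ — yet that is precisely the case in which $\|a_\pi\|_2^2$ concentrates and the Jensen bound already suffices, so the real work is to reconcile the two regimes in a single estimate, i.e. to control the joint law of $W$ and $\|a_\pi\|_2^2$. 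Once the one-permutation lemma is established, the degeneracy reduction and the constant bookkeeping leading to $12$ are routine.
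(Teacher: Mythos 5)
Your overall architecture matches the paper's: write $\sum_{i,j}d(i,j,\pi(i),\tau(j))=\sum_i a_\tau(i,\pi(i))$ with $a_\tau(i,k)=\sum_j d(i,j,k,\tau(j))$, apply a one-permutation randomization lemma to introduce the $G_i$'s, then condition on $\pi,(G_i)$, rewrite as $\sum_j b_{\pi,G}(j,\tau(j))$, and apply the same lemma again to introduce the $G'_j$'s. The factor $12=(2\sqrt{3})^2$ then comes from chaining two instances of a lemma with constant $2\sqrt{3}$. This is precisely how the paper proceeds (its Lemma~\ref{lem:1d}).

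The genuine gap is the one you flag yourself under ``Main obstacle'': you set up the one-permutation lemma via Chatterjee's exchangeable pair, arrive at a bound involving $v(\pi)\lesssim \|a_\pi\|_2^2+\Er\|a_\pi\|_2^2$, and then cannot close the comparison $\Er e^{\lambda W}\le\Er e^{c\lambda^2\|a_\pi\|_2^2}$ uniformly in $\lambda$. You propose a two-regime truncation argument but acknowledge it does not reconcile the regimes. The paper resolves this in one stroke with Chatterjee's self-normalized moment inequality (cited as \citet[Lemma~3.11]{han2024introduction}): $\log\Er e^{\lambda f}\le \lambda^2 r(\psi)/\bigl(2(1-\lambda^2/\psi)\bigr)$ with $r(\psi)=\psi^{-1}\log\Er e^{\psi v}$, and taking $\psi=\tfrac32\lambda^2$ collapses the bound to $\log\Er e^{\frac32\lambda^2 v}$ directly, with no case analysis. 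A Jensen plus Cauchy--Schwarz step then turns $\Er e^{\frac32\lambda^2 v}\le\Er e^{3\lambda^2(\|a_\pi\|_2^2+\Er\|a_\pi\|_2^2)}$ into $\Er e^{6\lambda^2\|a_\pi\|_2^2}$, which is exactly the decoupled Gaussian-chaos MGF $\Er e^{2\sqrt{3}\lambda\sum a(i,\pi(i))G_i}$. That is the ingredient missing from your proposal; without it your proof does not go through.

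Two further remarks. First, the reduction to a \emph{fully} degenerate tensor is both unnecessary and, as sketched, not obviously valid: the one-permutation lemma only needs the array to have \emph{total} sum zero, and the hypotheses $d(\cdot,j,\cdot,\ell)=0$ and $d(i,\cdot,k,\cdot)=0$ already guarantee $\sum_{i,k}a_\tau(i,k)=0$ and $\sum_{j,\ell}a_\pi(j,\ell)=0$ respectively, with no projection. Moreover, if you did try to subtract single-slot partial averages such as $d(i,j,\cdot,\ell)$ or $d(i,j,k,\cdot)$, those terms would not telescope to zero under the permutation sum — only partial averages over one of $\{i,k\}$ or one of $\{j,\ell\}$ telescope via the bijection argument — so the statistic would change. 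Second, you phrase the one-permutation identity $\Er[W-W'\mid\pi]=\tfrac{2}{n-1}W$ as a consequence of ``double centering,'' but it holds already under total-sum-zero; requiring double centering is harmless here but unnecessarily restrictive and slightly at odds with the hypotheses you actually have in hand.
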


The power of Theorems~\ref{thm:decoupling} and~\ref{thm:randomization} can be appreciated through the following result, which provides the \emph{first} concentration inequality for general DIPS.

\begin{theorem}\label{thm:main}
Let $N\geq 20$ and $\mathbf{D}=[d(i,j,k,\ell)]\in\Rb^{N\times N\times N\times N}$ be a fixed degenerate tensor. Introduce
\begin{align}\label{eq:BV}
B:=\max_{\sigma\in \mathcal{S}_N}\Big\|[d(i,j,\sigma(i),\sigma(j))]_{i,j}\Big\|_{\sf op}~~{\rm and}~~V := \frac{1}{N}\sum_{i,k\in[N]}\xi(i,k)^2+\frac{1}{N^2}\sum_{\substack{i,j,k,\ell\in[N]\\i\neq j, k\neq l}}d(i,j,k,\ell)^2,
\end{align}
where $[d(i,j,\sigma(i),\sigma(j))]_{i,j}$ denotes the $N\times N$ matrix whose $(i,j)$-th entry is $d(i,j,\sigma(i),\tilde{\sigma}(j))$ and \[
\xi(i,k) := d(i,i,k,k)-\frac{1}{N}\sum_{k'=1}^N d(i,i,k',k')-\frac{1}{N}\sum_{i'=1}^N d(i',i',k,k)+\frac{1}{N^2}\sum_{i',k'\in [N]}d(i',i',k',k').
\]
Recall $Q_d$ defined in \eqref{eq:degeneratedips}. 
Then, for all $\lambda\in[0,1/5400B]$, 
\[
\begin{aligned} 
&\Er\Big[\exp(\lambda (Q_d-\Er[Q_d]))\Big]\leq \exp\left(\frac{100000V\lambda^2}{1-5400B\lambda}\right),\\
\end{aligned}
\]
 which implies that for any $t>0$,
\[
\Prb\Big(Q_d-\Er[Q_d]\geq t\Big)\leq\exp\left(-\frac{t^2}{400000V+10800Bt}\right).
\]
\end{theorem}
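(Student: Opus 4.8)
The plan is to bound the MGF $\Er[\exp(\lambda(Q_d-\Er[Q_d]))]$ for $\lambda\in[0,1/5400B]$ by chaining the decoupling inequality (Theorem~\ref{thm:decoupling}), the randomization inequality (Theorem~\ref{thm:randomization}), and then a direct analysis of the resulting Gaussian chaos. First I would separate the diagonal and off-diagonal parts: write $Q_d=\sum_{i\ne j}d(i,j,\pi(i),\pi(j))+\sum_i d(i,i,\pi(i),\pi(i))$. The diagonal term $\sum_i d(i,i,\pi(i),\pi(i))=\sum_i \xi(i,\pi(i))+(\text{deterministic correction})$ is a single-indexed Hoeffding-type statistic in the doubly centered array $\xi$, whose MGF is controlled by standard combinatorial arguments (e.g. via the representation of sampling without replacement as a martingale, or Hoeffding's reduction to independent sampling), yielding a Bernstein bound with variance proxy $\tfrac1N\sum_{i,k}\xi(i,k)^2$ and scale controlled by $\max_{i,k}|\xi(i,k)|\le B$. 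This accounts for the first summand in $V$.

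For the off-diagonal part I would apply Theorem~\ref{thm:decoupling}, which bounds its centered MGF at parameter $\lambda$ by $\exp(50\tilde V\lambda^2)$ times the square root of the MGF (at parameter $9\lambda$) of the decoupled, block-restricted statistic $\sum_{i\in I,j\in I^c}\tilde d_{I,J}(i,j,\pi_1(i),\pi_2(j))$. Conditionally on $(I,J)$, this decoupled statistic has the product-permutation structure required by Theorem~\ref{thm:randomization} (with the two index sets $I,I^c$ playing the roles of $[N],[M]$ and the degeneracy of $\tilde d_{I,J}$ in the appropriate slots inherited from degeneracy of $d$), so I can pass to the Gaussian chaos $\sum_{i,j}\tilde d_{I,J}(i,j,\pi_1(i),\pi_2(j))G_iG_j'$ at the cost of another factor $12$ in the exponent. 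The remaining task is to bound the MGF of a Gaussian bilinear chaos with a random matrix $\mathbf M_{I,J}:=[\tilde d_{I,J}(i,j,\pi_1(i),\pi_2(j))]_{i\in I,j\in I^c}$: conditionally on everything but the Gaussians, $\mathbf{G}^\top\mathbf M_{I,J}\mathbf{G}'$ is a Gaussian chaos of order two, and the classical bound gives $\Er_G[\exp(\mu\,\mathbf G^\top\mathbf M\mathbf G')]\le \exp(c\mu^2\|\mathbf M\|_{\sf F}^2)$ whenever $\mu\|\mathbf M\|_{\sf op}\le c'$. Here $\|\mathbf M_{I,J}\|_{\sf op}\le B$ deterministically (it is a submatrix of a matrix of the form $[d(i,j,\sigma(i),\sigma(j))]$ after accounting for the centering, up to absolute constants — this is where the $\max_\sigma\|\cdot\|_{\sf op}$ definition of $B$ is used, and the centering correction contributes only rank-one-type perturbations bounded by $O(\tilde V^{1/2})$), and $\Er\|\mathbf M_{I,J}\|_{\sf F}^2\lesssim \tilde V\le V$ by averaging over the random bijections and subsets. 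Combining the conditional Gaussian bound with the boundedness of $\|\mathbf M_{I,J}\|_{\sf op}$ gives, after taking the outer expectation and the square root, a contribution of the form $\exp(cV\lambda^2/(1-c'B\lambda))$ on the range $\lambda\lesssim 1/B$.

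Collecting the diagonal Bernstein bound and the off-diagonal decoupling/randomization bound, and tracking the numerical constants through the factors $50$, $9$, $12$, the square root, and the Gaussian-chaos constant, yields $\Er[\exp(\lambda(Q_d-\Er[Q_d]))]\le\exp(100000V\lambda^2/(1-5400B\lambda))$ for $\lambda\in[0,1/5400B]$; the stated tail bound then follows by the standard Chernoff optimization (choosing $\lambda$ proportional to $t/(V+Bt)$ and using that $1-5400B\lambda\ge 1/2$ on the optimal range). I expect the main obstacle to be the Gaussian-chaos step on the decoupled statistic, specifically arguing carefully that the operator norm of the random matrix $\mathbf M_{I,J}$ — including the effect of the centering that defines $\tilde d_{I,J}$ — is bounded by $O(B)$ uniformly over $(I,J,\pi_1,\pi_2)$, since the centering mixes in averages over rows and columns that are not literally of the form $d(i,j,\sigma(i),\sigma(j))$; handling these correction terms (bounding their operator norm by $B$ plus lower-order $\tilde V^{1/2}$ contributions that get absorbed into $V\lambda^2$) is the delicate part. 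A secondary bookkeeping difficulty is ensuring all the constants compose to fit under the advertised $5400$ and $100000$, which forces the intermediate bounds to be somewhat loose but explicit.
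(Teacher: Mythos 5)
Your overall architecture matches the paper's exactly: split $Q_d$ into a diagonal single-indexed term and an off-diagonal term via Cauchy–Schwarz; bound the diagonal with a Bernstein-type inequality for Hoeffding statistics in the doubly centered array $\xi$; and bound the off-diagonal by chaining Theorem~\ref{thm:decoupling}, Theorem~\ref{thm:randomization}, and a Gaussian bilinear chaos MGF estimate. However, there is a genuine gap in your Gaussian-chaos step. After conditioning on $(I,J,\pi_1,\pi_2)$, the chaos bound gives $\Er_{G,G'}\big[\exp(\mu\, \mathbf G^\top \mathbf M\mathbf G')\big]\le \exp\big(c\mu^2\|\mathbf M\|_{\sf F}^2/(1-c'\|\mathbf M\|_{\sf op}\mu)\big)$, so to take the outer expectation you must control $\Er\big[\exp(c\mu^2\|\mathbf M\|_{\sf F}^2)\big]$ — the \emph{MGF} of the random squared Frobenius norm $\sum_{i,j}\tilde d_{I,J}(i,j,\pi_1(i),\pi_2(j))^2$ — not merely its mean. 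Your claim ``$\Er\|\mathbf M_{I,J}\|_{\sf F}^2\lesssim V$, after taking the outer expectation'' jumps over precisely this step; bounding the mean does not bound the exponential moment. The paper's Lemma~\ref{lem:2d_MGF} does the real work here, applying two nested Chatterjee-style exchangeable-pair MGF bounds (once conditioning on $\tau$ with matrix $b_\tau(i,k)=(\sum_j d(i,j,k,\tau(j))^2)^{1/2}$, then on $\pi$ with $b_\Sigma$) to convert $\|\mathbf M\|_{\sf F}^2$ into $\tilde V_{\mathrm{rect}}\le V$ at MGF level; that nested step is indispensable and your proposal omits it.

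A secondary, smaller discrepancy: you flag the operator-norm control of the centered random matrix $\mathbf M_{I,J}$ as ``the delicate part'' and speculate about rank-one perturbations contributing ``lower-order $\tilde V^{1/2}$ terms absorbed into $V\lambda^2$.'' The paper's treatment is different and cleaner: $\tilde d_{I,J}$ is a sum of 16 partial-average terms, and the key observation is that replacing $d$ by any of its partial averages cannot increase $\max_\sigma\|[d(i,j,\sigma(i),\sigma(j))]_{i,j}\|_{\sf op}$, so the triangle inequality gives the deterministic bound $B_2\le 16B$ directly, with no lower-order correction and no dependence on $\tilde V$. So the op-norm step you worried about is actually routine once phrased correctly, while the step you glossed over (the MGF of $\|\mathbf M\|_{\sf F}^2$) is where the substantive work lies.
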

Combining Theorem~\ref{thm:main}, the decomposition~\eqref{eq:decomp}, and a Bernstein-type inequality for Hoeffding's statistics \citep[Theorem~3.7]{han2024introduction}, originally due to \citet{chatterjee2007Stein}, we obtain the following corollary for $Q_w$ with a general, possibly non-degenerate tensor.

\begin{corollary}
\label{cor:uncentered2}
 Let $N\geq 20$ and $\W=[w(i,j,k,\ell)]\in\Rb^{N\times N\times N\times N}$ be a fixed tensor. Recall $Q_w$ defined in \eqref{eq:dips}. 
Then, there exists a universal constant $K>0$ such that for any $t>0$,
$$
\Prb(Q_w-\Er[Q_w] \geq t)
\leq \exp\left(-\frac{Kt^2}{V_a+B_at}\right)+ \exp\left(-\frac{Kt^2}{V_d+B_dt}\right),
$$
where
\begin{align*}
B_a:=N\max_{i,j\in[N]}\Big|a_w(i,j)\Big|,~~V_a:=N\sum_{i,j\in[N]} a_w(i,j)^2, ~~B_d:=\max_{\sigma\in \mathcal{S}_N}\Big\|\Big[d_w(i,j,\sigma(i),\sigma(j))\Big]_{i,j}\Big\|_{\sf op},\\
\xi_w(i,k) = d_w(i,i,k,k)-\frac{1}{N}\sum_{k'=1}^N d_w(i,i,k',k')-\frac{1}{N}\sum_{i'=1}^N d_w(i',i',k,k)+\frac{1}{N^2}\sum_{i',k'\in [N]}d_w(i',i',k',k')\\
{\rm and}~~V_d:=\frac{1}{N}\sum_{i,k\in[N]}\xi_w(i,k)^2+\frac{1}{N^2}\sum_{\substack{i,j,k,\ell\in[N]\\i\neq j, k\neq l}}d_w(i,j,k,\ell)^2,
\end{align*}
with $a_w(\cdot,\cdot)$ and $d_w(\cdot,\cdot,\cdot,\cdot)$ introduced in \eqref{eq:aw} and \eqref{eq:dw}, respectively.
\end{corollary}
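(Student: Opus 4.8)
The plan is to use the Hoeffding-type decomposition~\eqref{eq:decomp} to split $Q_w - \Er[Q_w]$ into a scaled linear permutation statistic and a degenerate DIPS, to bound each piece by an existing concentration inequality, and to combine the two bounds by a union bound.

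First I would invoke~\eqref{eq:decomp}, which yields $Q_w - \Er[Q_w] = N(S_a - \Er[S_a]) + (Q_{d_w} - \Er[Q_{d_w}])$ with $S_a := \sum_{i\in[N]} a_w(i,\pi(i))$ and $Q_{d_w} := \sum_{i,j\in[N]} d_w(i,j,\pi(i),\pi(j))$, and which also certifies that $\D_w = [d_w(i,j,k,\ell)]$ is a degenerate tensor. From the explicit form~\eqref{eq:aw} one reads off, in addition, that the matrix $[a_w(i,j)]$ is doubly centered, i.e.\ $\sum_j a_w(i,j) = \sum_i a_w(i,j) = 0$ for every index. A union bound then reduces the claim to separately controlling $\Prb(N(S_a - \Er[S_a]) \ge t/2)$ and $\Prb(Q_{d_w} - \Er[Q_{d_w}] \ge t/2)$.

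For the linear layer I would view $N S_a = \sum_{i\in[N]} b(i,\pi(i))$ as a Hoeffding permutation statistic with kernel $b(i,j) := N\,a_w(i,j)$ and apply the Bernstein-type inequality for Hoeffding's statistics \citep[Theorem~3.7]{han2024introduction} (originally due to \citet{chatterjee2007Stein}). Double-centeredness of $a_w$ makes the relevant variance proxy proportional to $N^{-1}\sum_{i,j} b(i,j)^2 = N\sum_{i,j} a_w(i,j)^2 = V_a$, while the boundedness parameter is $\max_{i,j}|b(i,j)| = N\max_{i,j}|a_w(i,j)| = B_a$; evaluated at $t/2$ this gives $\Prb(N(S_a - \Er[S_a]) \ge t/2) \le \exp(-K_1 t^2/(V_a + B_a t))$ for some universal $K_1>0$. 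For the degenerate layer, $\D_w$ is a fixed degenerate tensor, so Theorem~\ref{thm:main} applies directly with $B = B_d$, $\xi = \xi_w$, and $V = V_d$; evaluating its tail bound at $t/2$ yields $\Prb(Q_{d_w} - \Er[Q_{d_w}] \ge t/2) \le \exp(-K_2 t^2/(V_d + B_d t))$ for some universal $K_2>0$. Substituting both estimates into the union bound and setting $K := \min\{K_1, K_2\}$ completes the proof.

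Given the decomposition and the degeneracy of $\D_w$ supplied by~\eqref{eq:decomp}, the argument is essentially a two-line reduction, so the only point requiring care is lining up the hypotheses of the Hoeffding Bernstein inequality with the stated parameters $V_a$ and $B_a$ --- this is precisely where one must use that $a_w$ is doubly centered, so that the variance proxy collapses exactly as claimed. If one instead wishes to establish~\eqref{eq:decomp} and the degeneracy of $\D_w$ from scratch, that becomes the main obstacle: a lengthy but routine inclusion--exclusion verification over all partial averages of $\W$.
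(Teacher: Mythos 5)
Your proof is correct and follows essentially the same route as the paper: decompose $Q_w-\Er[Q_w]$ via the Hoeffding-type identity~\eqref{eq:decomp}, apply Chatterjee's combinatorial Bernstein inequality \citep[Theorem~3.7]{han2024introduction} to the scaled linear layer $N S_a$, apply Theorem~\ref{thm:main} to the degenerate layer, and combine with a union bound at level $t/2$. The only cosmetic differences are that the paper re-derives~\eqref{eq:decomp} inside the corollary's proof rather than citing it, and that your remark that $[a_w(i,j)]$ is doubly centered (a correct strengthening of the paper's observation $\sum_{i,j}a_w(i,j)=0$) is not actually needed for the cited Bernstein bound to produce $V_a$ and $B_a$ directly.
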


\section{Proofs of the main results} \label{sec:proofs}

In the following, recall that $G_1,G_2, \ldots,G_1',G_2', \ldots,$ are independent standard Gaussian random variables, further independent of the system. Recall that $\bigl[\cdot\bigr]_{i,j}$ denotes the matrix whose $(i,j)$-th entry is given by the enclosed expression, where $i$ and $j$ range over the prescribed index sets (not necessarily $[N]$). For a set $I \subseteq [N]$, we write $|I|$ for its cardinality and $I^c := [N]\setminus I$ for its complement. Additionally, for a permutation (or, more generally, a mapping) $\pi$, we denote $\pi(I) := \{\pi(i) \mid i \in I\}$. For a matrix $A=[a(i,j)] \in \mathbb{R}^{N\times N}$, define 
\begin{equation}
\label{eq:1dstatistics}
f(\pi) := \sum_{i=1}^N a(i,\pi(i)).
\end{equation}
When the dependence on $\pi$ needs to be emphasized, we write $f(\pi)$; otherwise, we write $f$ for simplicity.
\subsection{Auxiliary lemmas}

\begin{lemma}
\label{lem:1d}
Let
\(
\A=[a(i,j)]\in \Rb^{N\times N} 
\)
be a (possibly random) matrix independent of $\pi$ and satisfying $\sum_{i,j\in[N]}a(i,j)=0$. We then have, for any $\lambda\in \mathbb{R}$,
$$
\Er\left[\exp\left(\lambda\sum_{i=1}^N a(i,\pi(i))\right)\right]
\leq \Er\left[\exp\left(2\sqrt{3}\lambda\sum_{i=1}^N  a(i,\pi(i))G_i\right)\right].
$$
\end{lemma}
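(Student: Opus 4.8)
The plan is to prove the randomization inequality for single-indexed permutation statistics by a symmetrization argument that reduces the problem to a comparison between Rademacher and Gaussian multipliers, and then exploit the zero-sum assumption $\sum_{i,j}a(i,j)=0$ to introduce an independent copy of the randomness. First I would introduce an independent copy $\pi'$ of $\pi$ (independent of $\A$ and of everything else). Because $\sum_{i,j}a(i,j)=0$, for any fixed $i$ one has $\Er[a(i,\pi'(i))\mid \A]=N^{-1}\sum_{j}a(i,j)$, which need not vanish; so the naive centering by $\pi'$ does not immediately work. Instead I would exploit the full double sum: writing $S=\sum_i a(i,\pi(i))$, one needs a device that produces, after taking an independent copy, a symmetric (sign-invariant) object. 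The natural route is a \emph{two-permutation decoupling at the level of a single index}: condition on the multiset $\{a(i,\pi(i))\}$ is not enough because the entries are coupled through $\pi$; instead I would pair up coordinates.

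A cleaner and more standard approach, which I would actually carry out: partition $[N]$ randomly, or more simply use the exchangeability of $\pi$ directly. Fix the matrix $\A$. By Jensen's inequality it suffices to bound the conditional MGF given $\A$. Consider the random variable $S=\sum_i a(i,\pi(i))$ and an independent copy $S'=\sum_i a(i,\pi'(i))$ with $\pi'$ an independent uniform permutation. Then $\Er[\exp(\lambda S)] \le \Er[\exp(\lambda(S-\Er[S']))] \cdot \exp(\lambda \Er[S'])$; but $\Er[S']=N^{-1}\sum_{i,j}a(i,j)=0$ by hypothesis, so in fact $\Er[S]=0$ and $\Er[\exp(\lambda S)]\le \Er[\exp(\lambda(S-S'))]$ by Jensen in $\pi'$. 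Now $S-S'$ is symmetric: $S-S'\overset{d}{=}S'-S$. The key step is then to show $S-S'\overset{d}{=}\sum_i \varepsilon_i b_i$-type structure is \emph{not} literally available, since $S-S'=\sum_i[a(i,\pi(i))-a(i,\pi'(i))]$ is a sum of dependent symmetric terms; instead I would use the contraction/comparison principle for such sums. The honest path is: $S-S'$ has the same distribution as $\sum_i \varepsilon_i (a(i,\pi(i))-a(i,\pi'(i)))$ for i.i.d.\ Rademacher $\varepsilon_i$ independent of everything (this uses that conditionally on $(\pi,\pi')$ and on the event that $\pi,\pi'$ agree in a matching-symmetric way — more carefully, one couples $\pi,\pi'$ so that the pair $(\pi(i),\pi'(i))$ is exchangeable within each $i$, which holds because $(\pi,\pi')\overset{d}{=}(\pi',\pi)$). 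Then I replace each Rademacher by a Gaussian at the cost of the factor $\sqrt{\pi/2}\le 2$, and fold in the remaining $-a(i,\pi'(i))$ term by the triangle inequality for MGFs (Cauchy–Schwarz: $\Er[\exp(\lambda(U+V))]\le \Er[\exp(2\lambda U)]^{1/2}\Er[\exp(2\lambda V)]^{1/2}$), giving a combined constant that works out to $2\sqrt3$.

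Concretely, the steps in order: (1) condition on $\A$; reduce to the conditional MGF. (2) Introduce independent copy $\pi'$; use $\Er[S']=0$ and Jensen to get $\Er[\exp(\lambda S)]\le \Er[\exp(\lambda(S-S'))]$. (3) Symmetrize: insert i.i.d.\ Rademacher signs $\varepsilon_i$, using the exchangeability $(\pi,\pi')\overset{d}{=}(\pi',\pi)$ to justify $S-S'\overset{d}{=}\sum_i\varepsilon_i(a(i,\pi(i))-a(i,\pi'(i)))$. (4) Split by Cauchy–Schwarz into the $\pi$-part and the $\pi'$-part, each of the form $\Er[\exp(2\lambda\sum_i\varepsilon_i a(i,\pi(i)))]$. (5) Apply the Gaussian comparison $\Er_\varepsilon[\exp(\mu\sum_i\varepsilon_i c_i)]\le \Er_G[\exp(\sqrt{\pi/2}\,\mu\sum_i G_i c_i)]$ pointwise in the $c_i=a(i,\pi(i))$. (6) Reverse the Cauchy–Schwarz split and collapse $\pi'$ back (since its distribution matches $\pi$'s) to land on $\Er[\exp(2\sqrt3\,\lambda\sum_i a(i,\pi(i))G_i)]$; here the constant bookkeeping $2\cdot\sqrt{\pi/2}\le 2\sqrt3$ and the re-merging need a little care but is routine.

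\textbf{Main obstacle.} The delicate point is Step (3): justifying that $S-S'$ can be written as a Rademacher-weighted sum. Unlike the i.i.d.\ case, the terms $a(i,\pi(i))-a(i,\pi'(i))$ are not independent across $i$, so one cannot simply invoke the standard symmetrization lemma coordinatewise. The correct argument conditions on the \emph{unordered pair} $\{\pi,\pi'\}$ (equivalently, uses that swapping $\pi\leftrightarrow\pi'$ is a measure-preserving involution) to see that, conditionally, each summand independently flips sign — but making this rigorous requires being careful that the sign flips are genuinely independent across $i$, which they are because the involution acts by a single global swap, yielding a \emph{single} sign, not $N$ independent ones. This forces a more careful route: rather than Rademacher symmetrization, keep $S-S'$ intact and directly compare its MGF to a Gaussian chaos by a Gaussian-interpolation (Slepian/Gaussian smart-path) argument, or alternatively invoke the abstract randomization machinery already available. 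I expect the cleanest fix is to absorb this into a short lemma stating that for a zero-sum matrix the permutation sum $\sum_i a(i,\pi(i))$ has a sub-Gaussian-type MGF dominated by its Gaussian-multiplier analogue, proved by the interpolation method; the constant $2\sqrt3$ then emerges from optimizing that interpolation.
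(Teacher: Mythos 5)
Your own diagnosis of the obstacle is correct, and it is fatal to the route you sketched. The symmetrization in Step~(3) does not go through: after introducing an independent copy $\pi'$, the involution $(\pi,\pi')\mapsto(\pi',\pi)$ is measure-preserving but flips the sign of \emph{all} summands $a(i,\pi(i))-a(i,\pi'(i))$ simultaneously, so it only yields $S-S'\overset{d}{=}-(S-S')$, i.e.\ one global Rademacher sign, not $N$ independent ones. There is no conditioning (on the unordered pair $\{\pi,\pi'\}$ or otherwise) under which the coordinatewise signs become independent, precisely because both $\pi$ and $\pi'$ are global bijections and their coordinates are coupled; so Steps~(3)--(5) collapse. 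The closing suggestion to ``prove it by Gaussian interpolation'' is not a proof -- it restates the lemma as a goal without an argument, and the announced constant $2\sqrt3$ is not derived from anything.

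The paper's actual proof takes a fundamentally different route that sidesteps symmetrization entirely: it invokes Chatterjee's exchangeable-pair MGF method for permutations. The exchangeable pair is \emph{not} an independent copy but the local move $\pi'=\pi\circ(i\,j)$ with $i,j$ uniform on $[N]$. One defines $F(\pi,\pi')=\tfrac{N}{2}\bigl(f(\pi)-f(\pi')\bigr)$, verifies the antisymmetry and the identity $\Er[F(\pi,\pi')\mid\pi]=f(\pi)$ (this is where $\sum_{i,j}a(i,j)=0$ enters), and computes the variance proxy $v(\pi)=\tfrac12\Er[|(f(\pi)-f(\pi'))F(\pi,\pi')|\mid\pi]$, which is bounded by $2\sum_i a(i,\pi(i))^2+\tfrac{2}{N}\sum_{i,j}a(i,j)^2$. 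Chatterjee's lemma (cited as Lemma~3.11 in \citealp{han2024introduction}) with the self-referential choice $\psi=\tfrac32\lambda^2$ then gives $\Er[\exp(\lambda f)]\le\Er[\exp(\tfrac32\lambda^2 v(\pi))]$, a further Jensen step reduces the second term in $v$ to give $\Er[\exp(6\lambda^2\sum_i a(i,\pi(i))^2)]$, and one recognizes this last display as exactly the Gaussian MGF $\Er[\exp(2\sqrt3\,\lambda\sum_i a(i,\pi(i))G_i)]$, which is where the constant $2\sqrt3$ comes from. If you want to salvage your own outline, the place to look is not classical de~la~Pe\~na--Gin\'e symmetrization (which, as you found, has no foothold here) but log-Sobolev or exchangeable-pair machinery for the symmetric group.
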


\begin{lemma}
\label{lem:1d_MGF}
Let $\A=[a(i,j)] \in \mathbb{R}^{N \times N}$ be a fixed matrix. We then have,  for any $|\lambda|< \sqrt{2}/B_A$,
\[
\Er\left[\exp\left(\lambda\sum_{i=1}^N a(i,\pi(i))G_i\right)\right]\leq \exp\left(\frac{V_A\lambda^2-V_AB_A^2\lambda^4/4}{2(1-B_A^2\lambda^2/2)}\right),
\]
where 
\begin{equation}
\label{eq:defineBVforA}
B_A := \max_{i,j\in[N]}|a(i,j)|~~ \text{ and }~~ V_A := \frac{1}{N}\sum_{i,j\in[N]}a(i,j)^2.
\end{equation}
\end{lemma}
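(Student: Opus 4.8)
The plan is to estimate the moment-generating function of the Gaussian chaos $\sum_{i=1}^N a(i,\pi(i))G_i$ by conditioning first on $\pi$ and then handling the residual expectation over $\pi$. Conditionally on $\pi$, the sum $\sum_i a(i,\pi(i))G_i$ is a centered Gaussian with variance $s_\pi^2 := \sum_{i=1}^N a(i,\pi(i))^2$, so that
\[
\Er\!\left[\exp\!\Big(\lambda\textstyle\sum_{i=1}^N a(i,\pi(i))G_i\Big)\,\Big|\,\pi\right]
= \exp\!\left(\tfrac{\lambda^2}{2}\, s_\pi^2\right).
\]
Hence the problem reduces to bounding $\Er\big[\exp(\tfrac{\lambda^2}{2}\, s_\pi^2)\big]$, i.e.\ the MGF of the single-indexed permutation statistic $s_\pi^2 = \sum_{i=1}^N b(i,\pi(i))$ with $b(i,k) := a(i,k)^2$, evaluated at the parameter $\lambda^2/2$.

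Next I would control this latter MGF by a truncation/expansion argument exploiting that each summand $b(i,\pi(i)) = a(i,\pi(i))^2$ lies in $[0, B_A^2]$. The key deterministic bound is that $s_\pi^2 \le B_A^2 \cdot \#\{i : a(i,\pi(i)) \ne 0\}$ is bounded by $N B_A^2$, but more usefully $\Er[s_\pi^2] = \tfrac1N\sum_{i,k} a(i,k)^2 = V_A$ by exchangeability (each pair $(i,k)$ is hit with probability $1/N$). One then expands $\exp(\tfrac{\lambda^2}{2}s_\pi^2) = \sum_{m\ge0} \tfrac{1}{m!}(\tfrac{\lambda^2}{2})^m (s_\pi^2)^m$ and bounds $\Er[(s_\pi^2)^m]$. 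Writing $s_\pi^2 = \sum_i b(i,\pi(i))$ and using $0 \le b(i,\pi(i)) \le B_A^2$, one gets $(s_\pi^2)^m \le (B_A^2)^{m-1} s_\pi^2 \cdot$ (something), or more carefully $\Er[(s_\pi^2)^m] \le (B_A^2)^{m-1} \Er[s_\pi^2 (s_\pi^2)^{m-1}]$; iterating, or better, using the bound $\Er[(s_\pi^2)^m]\le (B_A^2)^{m-1}\,m!\cdot\Er[s_\pi^2]\cdot(\text{geometric factor})$—actually the clean route is: since $s_\pi^2\le B_A^2 N$ is not tight enough, instead use that for a sum of nonnegative terms each $\le B_A^2$, $(s_\pi^2)^m \le (B_A^2)^{m-1}\, s_\pi^2 \cdot (\max \text{ over a chain})$ giving $\Er[(s_\pi^2)^m] \le (B_A^2)^{m-1} V_A \cdot (\text{const})^{m-1}$; summing the series then yields $\Er[\exp(\tfrac{\lambda^2}{2}s_\pi^2)] \le 1 + \tfrac{V_A \lambda^2/2}{1 - c B_A^2\lambda^2} \le \exp(\cdots)$, and matching constants against the stated $\tfrac{V_A\lambda^2 - V_A B_A^2\lambda^4/4}{2(1-B_A^2\lambda^2/2)}$ pins down the precise form. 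The exact target suggests the cleaner identity $\Er[(s_\pi^2)^m]\le V_A(B_A^2)^{m-1}$ for $m\ge1$ is what is used (this holds since $\Er[(s_\pi^2)^m]=\Er[(\sum_i b(i,\pi(i)))(s_\pi^2)^{m-1}]\le B_A^2\,\Er[(s_\pi^2)^{m-1}]$ iterated down to $\Er[s_\pi^2]=V_A$), whence
\[
\Er\!\Big[e^{\frac{\lambda^2}{2}s_\pi^2}\Big]
\le 1 + \sum_{m\ge1}\frac{(\lambda^2/2)^m}{m!}V_A(B_A^2)^{m-1}
= 1 + \frac{V_A}{B_A^2}\Big(e^{B_A^2\lambda^2/2}-1\Big),
\]
and then $\log(1+x)\le x$ plus the elementary inequality $e^u - 1 \le \frac{u - u^2/2\cdot(\text{something})}{1-u/\cdots}$ type manipulation delivers the stated closed form; I would verify the algebra reconciling $\frac{V_A}{B_A^2}(e^{B_A^2\lambda^2/2}-1)$ with $\exp\big(\frac{V_A\lambda^2 - V_A B_A^2\lambda^4/4}{2(1-B_A^2\lambda^2/2)}\big)-$type bounds, possibly via a Bennett-to-Bernstein conversion.

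The main obstacle I anticipate is getting the constants to match the precise form stated, in particular justifying the per-moment bound $\Er[(s_\pi^2)^m]\le V_A (B_A^2)^{m-1}$ (or whichever variant the authors actually use) rigorously via exchangeability of $\pi$—the inequality $\Er[(\sum_i b(i,\pi(i)))\cdot g(\pi)]\le B_A^2\,\Er[g(\pi)]$ for the nonnegative functional $g(\pi)=(s_\pi^2)^{m-1}$ is not quite immediate because $g$ depends on $\pi$, so one needs the bound $\sum_i b(i,\pi(i))\le B_A^2\cdot|\{i:a(i,\pi(i))\ne0\}|$ and then a separate argument, or instead the crude $\sum_i b(i,\pi(i))\le \sum_i B_A^2 \cdot \mathbf 1\{\cdots\}$—but actually the truly clean fact is $s_\pi^2\le B_A^2\cdot N$ is too lossy, so the correct move is $\Er[(s_\pi^2)^m]=\Er[(s_\pi^2)^{m-1}\sum_i b(i,\pi(i))]$ and bound $(s_\pi^2)^{m-1}\le (B_A^2 N)^{m-1}$? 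No—that reintroduces $N$. The resolution is that the stated bound is genuinely dimension-free, so the argument must use the pointwise bound $s_\pi^2\le B_A^2\cdot\#(\text{nonzero summands})$ together with $\Er[s_\pi^2]=V_A$ via a more careful combinatorial moment computation, or a Maurey–type/martingale argument on $\pi$; pinning down this step and the induced constants is where the real work lies, while the Gaussian conditioning step and the final series summation are routine.
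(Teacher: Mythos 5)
The first step of your proposal matches the paper: Gaussian conditioning reduces the problem to bounding $\Er\bigl[\exp\bigl(\tfrac{\lambda^2}{2}s_\pi^2\bigr)\bigr]$ with $s_\pi^2=\sum_i a(i,\pi(i))^2$. Beyond that, however, the route you sketch breaks down, and you correctly flag the weak point yourself.

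The per-moment bound you try to use, $\Er[(s_\pi^2)^m]\leq V_A\,(B_A^2)^{m-1}$ for $m\geq 1$, is false. Take $a(i,j)\equiv c$ for all $i,j$. Then $B_A^2=c^2$, $V_A=Nc^2$, and $s_\pi^2=Nc^2$ deterministically, so $\Er[(s_\pi^2)^2]=N^2c^4$, while the claimed bound gives $V_AB_A^2=Nc^4$. For $N>1$ the claim fails, and the resulting MGF bound $1+\tfrac{V_A}{B_A^2}(e^{B_A^2\lambda^2/2}-1)$ can sit strictly below the true MGF $e^{V_A\lambda^2/2}$ in this example. The reason the iteration $\Er[(s_\pi^2)^m]=\Er[(s_\pi^2)^{m-1}\sum_i b(i,\pi(i))]\le B_A^2\Er[(s_\pi^2)^{m-1}]$ does not work is exactly the one you suspected: the nonnegative sum $\sum_i b(i,\pi(i))$ involves $N$ terms each bounded by $B_A^2$, so it is bounded pointwise by $NB_A^2$, not by $B_A^2$; there is no way to "peel off" a single $B_A^2$ without losing a factor of $N$. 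The lemma's dimension-free bound requires a genuinely different input.

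What the paper does at this step is apply Chatterjee's exchangeable-pair concentration for permutation statistics. It forms the centered functional $f_2(\pi)=s_\pi^2-\Er[s_\pi^2]$, the antisymmetric pair functional $F_2(\pi,\pi')=\tfrac N2(s_\pi^2-s_{\pi'}^2)$ with $\Er[F_2\mid\pi]=f_2(\pi)$, and verifies a \emph{self-bounding} inequality of the form
\[
v_2(\pi):=\tfrac12\,\Er\bigl[|(f_2(\pi)-f_2(\pi'))F_2(\pi,\pi')|\,\big|\,\pi\bigr]\leq B_A^2\,f_2(\pi)+B_A^2 V_A.
\]
This linear-in-$f_2$ structure is exactly what Chatterjee's Bernstein-type MGF lemma (Lemma~3.10 of \citealp{han2024introduction}) is designed to exploit, and it yields the stated closed form after substituting $\lambda'=\lambda^2/2$. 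Your moment-expansion approach has no mechanism to encode this self-bounding feature, which is why the constants cannot be made to close. To repair your argument you would need to replace the naive moment iteration with either this exchangeable-pair machinery or an equivalent modified log-Sobolev/entropy argument for bounded permutation statistics.
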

\begin{lemma}
\label{lem:2d_MGF}
Under the setting of Theorem \ref{thm:randomization}, we have, for any $|\lambda|< 0.64/B_{\mathrm{rect}}$,\[\Er\left[\exp\left(\lambda{\sum_{\substack{i\in[N]\\ j\in[M]}} d(i,j,\pi(i),\tau(j))G_iG'_j}\right)\right]\leq \exp\left(\frac{\tilde{V}_{\mathrm{rect}}\lambda^2}{2(1-\frac{B_{\mathrm{rect}}\lambda}{0.64})}\right),\]
where \begin{equation}
\label{eq:defineBVrect}
B_{\mathrm{rect}} := \max_{\substack{\sigma\in \mathcal{S}_N\\\tilde{\sigma}\in \mathcal{S}_M}}\Big\|[d(i,j,\sigma(i),\tilde{\sigma}(j))]_{i,j}\Big\|_{\sf op} ~~\text{ and }~~ \tilde{V}_{\mathrm{rect}} := \frac{1}{NM}\sum_{\substack{i,k\in[N]\\ j,\ell\in [M]}}d(i,j,k,\ell)^2.
\end{equation}
\end{lemma}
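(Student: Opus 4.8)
The plan is to remove the randomness in three stages --- first the Gaussians $G,G'$, then the permutation $\pi$ given $\tau$, then $\tau$ --- and to close with a one-variable inequality. Write $B:=B_{\mathrm{rect}}$ and, for each realization of $(\D,\pi,\tau)$, set $M_{\pi,\tau}:=[d(i,j,\pi(i),\tau(j))]_{i\in[N],\,j\in[M]}\in\Rb^{N\times M}$, so that the object of interest is the decoupled bilinear Gaussian chaos $G^{\top}M_{\pi,\tau}G'$ with $G=(G_i)_{i\in[N]}$ and $G'=(G'_j)_{j\in[M]}$. Since the $G_i,G'_j$ are independent of $(\D,\pi,\tau)$, I would first condition on $\D$ and treat it as deterministic, so that $B$ and $\tilde V_{\mathrm{rect}}$ become constants (the general case follows by an outer expectation over $\D$ at the end). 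Conditioning further on $(\pi,\tau,G')$, the chaos is a centered Gaussian in $G$ with variance $\|M_{\pi,\tau}G'\|^2$, so its conditional MGF at $\lambda$ equals $\exp(\tfrac{\lambda^2}{2}\|M_{\pi,\tau}G'\|^2)$; integrating out $G'$ by the standard Gaussian-quadratic-form computation gives $\Er[\exp(\lambda G^{\top}M_{\pi,\tau}G')\mid\pi,\tau]=\prod_{l}(1-\lambda^2 s_l^2)^{-1/2}$, where $s_1\ge s_2\ge\cdots$ are the singular values of $M_{\pi,\tau}$ (the product is finite because $\lambda s_1\le\lambda\|M_{\pi,\tau}\|_{\sf op}\le\lambda B<1$). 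Using $-\log(1-u)\le u/(1-u)$ on $[0,1)$ together with $s_l\le B$, this is at most $\exp(\mu\|M_{\pi,\tau}\|_{\sf F}^2)$ with $\mu:=\lambda^2/\{2(1-\lambda^2 B^2)\}$, so it remains to bound $\Er_{\pi,\tau}[\exp(\mu\sum_{i\in[N],\,j\in[M]}d(i,j,\pi(i),\tau(j))^2)]$.

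The heart of the argument is this last expectation, which I would control by two applications of Lemma~\ref{lem:1d_MGF}. Condition on $\tau$ and write $\sum_{i,j}d(i,j,\pi(i),\tau(j))^2=\sum_{i\in[N]}a^{\tau}(i,\pi(i))^2$, where $a^{\tau}(i,k):=(\sum_{j\in[M]}d(i,j,k,\tau(j))^2)^{1/2}$. The crucial observation is that $a^{\tau}(i,k)^2$ is exactly the squared $\ell_2$-norm of the $i$-th row of the matrix $[d(i',j,\sigma(i'),\tau(j))]_{i',j}$ for any $\sigma\in\mathcal S_N$ with $\sigma(i)=k$, whence $a^{\tau}(i,k)\le B$ for every $i,k$ and every realization of $\tau$. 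Since $\Er[\exp(\theta\sum_i a(i,\pi(i))^2)]$ is precisely the Gaussian-weighted quantity bounded in Lemma~\ref{lem:1d_MGF} (apply that lemma with its parameter equal to $\sqrt{2\theta}$), taking $\theta=\mu$ yields a bound $\exp(\mu_1\Er_{\pi}[\sum_i a^{\tau}(i,\pi(i))^2\mid\tau])$ with the explicit $\mu_1:=\mu(2-B^2\mu)/\{2(1-B^2\mu)\}\ge\mu$, valid whenever $\mu B^2<1$. Now $\Er_{\pi}[\sum_i a^{\tau}(i,\pi(i))^2\mid\tau]=\tfrac1N\sum_{i,k}a^{\tau}(i,k)^2=\sum_{j\in[M]}b(j,\tau(j))^2$ with $b(j,\ell):=(\tfrac1N\sum_{i,k\in[N]}d(i,j,k,\ell)^2)^{1/2}$, and $b(j,\ell)^2$ equals the average over $\sigma\in\mathcal S_N$ of the squared $\ell_2$-norm of the $j$-th column of $[d(i,j',\sigma(i),\tilde\sigma(j'))]_{i,j'}$ (for any $\tilde\sigma\in\mathcal S_M$ with $\tilde\sigma(j)=\ell$), so $b(j,\ell)\le B$ as well. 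A second application of Lemma~\ref{lem:1d_MGF}, now in $\tau$, then bounds $\Er_{\tau}[\exp(\mu_1\sum_j b(j,\tau(j))^2)]$ by $\exp(\mu_2\tilde V_{\mathrm{rect}})$, where $\mu_2:=\mu_1(2-B^2\mu_1)/\{2(1-B^2\mu_1)\}$ and $\tfrac1M\sum_{j,\ell}b(j,\ell)^2=\tfrac1{NM}\sum_{i,j,k,\ell}d(i,j,k,\ell)^2=\tilde V_{\mathrm{rect}}$.

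Composing the three bounds yields $\Er[\exp(\lambda G^{\top}M_{\pi,\tau}G')]\le\exp(\mu_2\tilde V_{\mathrm{rect}})$, where $\mu_2=\mu_2(\lambda)$ arises from $\lambda$ by the explicit substitutions $\mu\mapsto\mu_1\mapsto\mu_2$ above. It then remains to verify the elementary one-variable inequality $\mu_2\le\lambda^2/\{2(1-B\lambda/0.64)\}$ on $\lambda\in[0,0.64/B)$, together with $\mu B^2<1$ and $\mu_1 B^2<1$ there so that both invocations of Lemma~\ref{lem:1d_MGF} are legitimate. Since $\mu=\tfrac{\lambda^2}{2}(1+O(\lambda^2B^2))$ and each substitution multiplies by $1+O(\mu B^2)$, the composite exponent is $\tfrac{\lambda^2}{2}(1+O(\lambda^2 B^2))$, which lies below $\tfrac{\lambda^2}{2}(1-B\lambda/0.64)^{-1}$ once the radius has been cut to $0.64/B$; this is a routine computation with explicit constants. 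Finally, taking an outer expectation over $\D$ --- the bound holding conditionally on $\D$ with $B_{\mathrm{rect}},\tilde V_{\mathrm{rect}}$ evaluated at that $\D$ --- gives the general statement. The only genuine obstacle is the middle step: recognizing $\sum_{i,j}d(i,j,\pi(i),\tau(j))^2$ as two nested combinatorial sums of squares whose entries are bounded by $B_{\mathrm{rect}}$, via their identification with squared row or column norms (or averages thereof) of the permuted matrices defining $B_{\mathrm{rect}}$, after which Lemma~\ref{lem:1d_MGF} does the work; I note in passing that the degeneracy hypotheses $d(\cdot,j,\cdot,\ell)=d(i,\cdot,k,\cdot)=0$ inherited from Theorem~\ref{thm:randomization} are not used here.
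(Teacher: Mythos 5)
Your proposal follows essentially the same route as the paper's proof: bound the conditional Gaussian-chaos MGF by $\exp\bigl(\mu\,\|M_{\pi,\tau}\|_{\sf F}^2\bigr)$ via operator-norm control, then control $\Er\exp\bigl(\mu\sum_{i,j}d(i,j,\pi(i),\tau(j))^2\bigr)$ by two nested applications of the combinatorial sum-of-squares MGF bound underlying Lemma~\ref{lem:1d_MGF} (first in $\pi$ given $\tau$, then in $\tau$), using the same row/column-norm identifications to get $a^\tau(i,k),\,b(j,\ell)\le B_{\mathrm{rect}}$, and close with an explicit one-variable comparison. The only substantive difference is cosmetic---you derive the first step directly from the singular-value expansion of the bilinear chaos MGF, yielding the slightly sharper denominator $1-\lambda^2B_{\mathrm{rect}}^2$, whereas the paper cites the Boucheron--Lugosi--Massart bound on the Hermitian dilation giving $1-\lambda B_{\mathrm{rect}}$; and you defer (but correctly identify) both the admissibility checks $\mu B_{\mathrm{rect}}^2<1$ and $\mu_1 B_{\mathrm{rect}}^2<1$ and the closing inequality $\mu_2\le\lambda^2/\{2(1-B_{\mathrm{rect}}\lambda/0.64)\}$ on $\lambda\in[0,0.64/B_{\mathrm{rect}})$, which the paper carries out in full and which do hold.
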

\begin{lemma}
\label{lem:Bennet_MGF}
Let \(X\) be a real random variable satisfying, for any \(\lambda\ge 0\),
\[
\Er\big[e^{\lambda X}\big]\le\exp\big(C\,\lambda^{2}e^{4\lambda}\big)
\]
for some constant \(C>0\). Then for any \(t\ge 0\),
\[
\Prb\Big(X\ge t\Big)
\le
\exp\left(-\frac{t}{12}\log\left(1+\frac{t}{C}\right)\right).
\]
\end{lemma}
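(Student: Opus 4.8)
\textbf{Proof proposal for Lemma~\ref{lem:Bennet_MGF}.}
The plan is to run a Chernoff bound against the hypothesized moment-generating-function estimate and then make an explicit, near-optimal choice of the free parameter. First I would apply Markov's inequality together with the hypothesis to obtain, for every $\lambda\ge 0$ (the hypothesis is assumed only for nonnegative $\lambda$, which is all we will use),
\[
\Prb(X\ge t)\le e^{-\lambda t}\,\Er[e^{\lambda X}]\le \exp\bigl(-\lambda t+C\lambda^{2}e^{4\lambda}\bigr).
\]
The infimum over $\lambda$ has no closed form, so instead of optimizing exactly I would substitute $\lambda=\tfrac18\log\!\bigl(1+t/C\bigr)$, which is $\ge 0$ for $t\ge 0$ and equals $0$ when $t=0$ (in which case the claimed bound is trivial). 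This scaling is chosen precisely so that $e^{4\lambda}=\sqrt{1+t/C}$, which is what keeps the ``variance'' term $C\lambda^{2}e^{4\lambda}$ small relative to the linear term $\lambda t$.

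With this substitution the exponent becomes
\[
\frac18\log\!\Bigl(1+\frac tC\Bigr)\left(-t+\frac{C}{8}\,\log\!\Bigl(1+\frac tC\Bigr)\sqrt{1+\tfrac tC}\,\right),
\]
so the whole argument reduces to the elementary scalar inequality
\[
\log(1+u)\sqrt{1+u}\le u\qquad\text{for all }u\ge 0,
\]
applied with $u=t/C$. I would verify this by setting $h(u):=u/\sqrt{1+u}-\log(1+u)$, noting $h(0)=0$, and computing $h'(u)=\dfrac{1}{1+u}\cdot\dfrac{1+u/2-\sqrt{1+u}}{\sqrt{1+u}}\ge 0$, the last step being $(1+u/2)^{2}\ge 1+u$. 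Feeding this back in shows the bracketed factor is at most $-t+t/8=-7t/8$, hence the exponent is at most $-\tfrac{7t}{64}\log(1+t/C)$; since $\tfrac{7}{64}\ge\tfrac1{12}$ and $t\log(1+t/C)\ge 0$, the stated bound follows.

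I do not anticipate a genuine obstacle here. The only nonroutine ingredient is recognizing the correct order of magnitude $\lambda\asymp\log(1+t/C)$ needed to absorb the $e^{4\lambda}$ factor; once that is fixed, what remains is a short calculation plus the one-line inequality $\log(1+u)\sqrt{1+u}\le u$. The deliberately loose constant $\tfrac1{12}$ leaves ample slack, so there is no need to track the numerical coefficients carefully.
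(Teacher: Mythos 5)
Your proof is correct, and at a high level it mirrors the paper's: a Chernoff bound against the assumed MGF estimate, followed by a near-optimal choice $\lambda\asymp\log(1+t/C)$ and a one-variable scalar inequality. The details of the parameter choice differ. The paper defines $\lambda$ implicitly by the balance equation $(4\lambda^{2}+2\lambda)e^{4\lambda}=t/C$ and then shows that this $\lambda$ is bounded below by $\tfrac16\log(1+t/C)$ (the paper's prose has a sign typo, writing $\lambda\le\tfrac16\log(1+t/C)$ where the inequality actually established and used is $\ge$), which reduces to verifying an inequality of the form $(4\mu^{2}+2\mu)e^{4\mu}\le e^{6\mu}-1$. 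You instead substitute the explicit value $\lambda=\tfrac18\log(1+t/C)$, chosen precisely so that $e^{4\lambda}=\sqrt{1+t/C}$, and the whole argument collapses to the clean elementary inequality $\log(1+u)\sqrt{1+u}\le u$, which you verify by a two-line monotonicity calculation: $h(u)=u/\sqrt{1+u}-\log(1+u)$ has $h(0)=0$ and $h'(u)=\bigl(1+u/2-\sqrt{1+u}\bigr)/(1+u)^{3/2}\ge 0$ since $(1+u/2)^{2}\ge 1+u$. Both routes work; yours is somewhat more transparent, and it even yields the slightly better exponent constant $\tfrac{7}{64}$ before being relaxed to the stated $\tfrac{1}{12}$.
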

\begin{lemma}
\label{lem:Bennet_1d_lem1}
Let $\A=[a(i,j)] \in \mathbb{R}^{N \times N}$ be a fixed matrix whose entries are in $[0,1]$ and $f$ be defined as in \eqref{eq:1dstatistics}. Then for any $\lambda\geq 0$,
\[
\Er\Big[\exp\Big\{\lambda(f-\Er[f])\Big\}\Big]\leq \exp\Big(2\,\Er[f]\lambda(e^{2\lambda}-1)\Big).
\]
\end{lemma}
\begin{lemma}
\label{lem:Bennet_1d_lem2}
Let $\A=[a(i,j)] \in \mathbb{R}^{N \times N}$ be a fixed matrix whose entries are in $[-1,1]$ and $f$ be defined as in \eqref{eq:1dstatistics}. 
Then for any $|\lambda|\leq 1/3$,
\[
\Er\Big[\exp\Big\{\lambda(f-\Er[f])\Big\}\Big]\leq \exp(12V_A\lambda^2),
\]    
where $V_A$ is introduced in \eqref{eq:defineBVforA}.
\end{lemma}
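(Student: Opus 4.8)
The plan is to interpolate between two regimes. For very small $|\lambda|$, the quadratic bound $\exp(12 V_A \lambda^2)$ is the natural target, and the cleanest route is to bound the cumulant generating function $\psi(\lambda) := \log \Er[\exp(\lambda(f-\Er[f]))]$ directly via the bounded-difference / exchangeable-pair structure already exploited elsewhere in the paper. Concretely, $f(\pi) = \sum_i a(i,\pi(i))$ changes by at most $|a(i,\pi(i))-a(i,\tau(i))| + |a(j,\pi(j))-a(j,\tau(j))| \le 4$ when $\pi$ is modified by a single transposition $\tau = \pi\circ(i\,j)$, since all entries lie in $[-1,1]$. This puts us squarely in the setting where a Chatterjee-type Stein/exchangeable-pair argument (the same machinery behind \citet[Theorem~3.7]{han2024introduction} invoked later) yields $\psi''(\lambda) \le$ a bound controlled by the ``variance proxy'' $\sum_{i,j}a(i,j)^2/N = V_A$ uniformly over $|\lambda|\le 1/3$; integrating twice with $\psi(0)=\psi'(0)=0$ gives the claim. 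Alternatively — and this is probably the shortest path given what is already available — I would derive it as a corollary of Lemma~\ref{lem:1d} and Lemma~\ref{lem:1d_MGF}: apply Lemma~\ref{lem:1d} to the centered matrix $\bar a(i,j) := a(i,j) - \frac1N\sum_k a(i,k) - \frac1N\sum_k a(k,j) + \frac1{N^2}\sum_{k,\ell}a(k,\ell)$ (whose row and column sums vanish, so $\sum_{i,j}\bar a(i,j)=0$ and $f - \Er[f] = \sum_i \bar a(i,\pi(i))$ up to a harmless deterministic shift), then apply Lemma~\ref{lem:1d_MGF} with $\lambda \mapsto 2\sqrt3\,\lambda$.

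The key steps in order are: (i) reduce from $a$ to its doubly-centered version $\bar a$, checking that $f - \Er[f] = \sum_i \bar a(i,\pi(i))$ and that $B_{\bar A} \le 4$, $V_{\bar A} \le V_A$ (the doubly-centering projection is a contraction in the relevant norm, giving $\sum_{i,j}\bar a(i,j)^2 \le \sum_{i,j} a(i,j)^2$); (ii) invoke Lemma~\ref{lem:1d} to pass to the Gaussian-randomized statistic $\sum_i \bar a(i,\pi(i)) G_i$ at the price of the factor $2\sqrt3$; (iii) invoke Lemma~\ref{lem:1d_MGF} at argument $2\sqrt3\,\lambda$, whose hypothesis $|2\sqrt3\,\lambda| < \sqrt2/B_{\bar A}$ is satisfied for $|\lambda| \le 1/3$ since $2\sqrt3/3 = 2/\sqrt3 < \sqrt2/4 \cdot$ — wait, one must check the constant carefully here: $2\sqrt3\,\lambda \cdot B_{\bar A} \le 2\sqrt3 \cdot \tfrac13 \cdot 4 = 8\sqrt3/3 \approx 4.6$, which is \emph{not} $< \sqrt2$, so the crude bound $B_{\bar A}\le 4$ is too lossy and one needs either a sharper bound on $B_{\bar A}$ or a direct argument. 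This signals that the exchangeable-pair route of the first paragraph, or a more careful tracking of constants, is the safer option.

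The main obstacle is exactly this constant-chasing: getting a clean quadratic bound valid on the \emph{fixed} interval $|\lambda|\le 1/3$ with the \emph{explicit} constant $12$ in front of $V_A$. The difficulty is that the naive composition of the two lemmas loses a factor $12$ in the exponent (from $(2\sqrt3)^2$) and simultaneously shrinks the admissible $\lambda$-range by the same factor, so one cannot simply black-box them; instead I expect the proof to run a self-contained exchangeable-pair computation for $\psi(\lambda)$, bounding $|f(\pi) - f(\tau)| \le 2\big(|a(i,\pi(i))| + |a(i,\tau(i))|\big)$ type quantities and using $\sum_{i,j}a(i,j)^2 = N V_A$ together with the elementary inequality $e^x - 1 - x \le x^2 e^{|x|}/2$ (or $\le x^2$ for $|x|$ small), and then verifying numerically that the resulting bound is $\le 12 V_A \lambda^2$ precisely when $|\lambda| \le 1/3$. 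The choice of the threshold $1/3$ and the constant $12$ are almost certainly reverse-engineered to make this last inequality hold, so the ``proof'' is mostly the verification that the CGF bound one obtains is dominated by $12 V_A \lambda^2$ on that range — routine but needing care.
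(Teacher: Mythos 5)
Your diagnosis of the naive composition route is correct as far as it goes: if one doubly centers $a$ to $\bar a$ and then tries to apply Lemma~\ref{lem:1d_MGF} directly to the matrix $\bar A$, the admissibility condition $|2\sqrt3\,\lambda| < \sqrt2/B_{\bar A}$ indeed fails for $|\lambda|$ near $1/3$ once you only know $B_{\bar A}\le 4$. However, your guess about what the paper does instead --- ``a self-contained exchangeable-pair computation for $\psi(\lambda)$'' --- is wrong. The paper \emph{does} run the composition of Lemmas~\ref{lem:1d} and~\ref{lem:1d_MGF}, and it circumvents the $B_{\bar A}$ obstruction by a re-representation trick you missed. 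After applying Lemma~\ref{lem:1d} to the centered matrix $\tilde A$ and integrating out the Gaussians, one lands on $\Er\bigl[\exp\bigl(6\lambda^2\sum_i\tilde a(i,\pi(i))^2\bigr)\bigr]$. At that point the paper replaces $\tilde a(i,\pi(i))^2$ by $a(i,\pi(i))^2$ inside the exponent and then \emph{re-reads} the resulting quantity backwards as the Gaussian MGF $\Er\bigl[\exp\bigl(2\sqrt3\lambda\sum_i a(i,\pi(i))G_i\bigr)\bigr]$ for the \emph{original} matrix $A$, whose entries lie in $[-1,1]$ so that $B_A\le 1$. Lemma~\ref{lem:1d_MGF} then applies with constraint $|2\sqrt3\lambda|<\sqrt2/B_A=\sqrt2$, i.e.\ $|\lambda|<1/\sqrt6\approx 0.408$, which contains $[-1/3,1/3]$. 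The closing arithmetic gives the exponent $\frac{V_A(2\sqrt3\lambda)^2(1-3\lambda^2)}{2(1-6\lambda^2)}=\frac{6(1-3\lambda^2)}{1-6\lambda^2}V_A\lambda^2$, which is a monotone function of $\lambda^2$ equal to exactly $12V_A\lambda^2$ at $\lambda^2=1/9$; this is where the constants $12$ and $1/3$ come from, confirming your ``reverse-engineered'' suspicion.

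Two further remarks. First, since your exchangeable-pair plan is never executed and the CGF computation it relies on is not spelled out, you cannot claim it yields the specific constant $12$ on $[-1/3,1/3]$; you would have to carry it through, and there is no reason it would land on the same constant. Second, a point worth interrogating in the paper's own proof: the step $\Er\bigl[\exp\bigl(6\lambda^2\sum_i\tilde a(i,\pi(i))^2\bigr)\bigr]\le\Er\bigl[\exp\bigl(6\lambda^2\sum_i a(i,\pi(i))^2\bigr)\bigr]$ is not a pointwise inequality over $\pi$ (easy $2\times 2$ counterexamples exist), so it needs a justification --- e.g.\ via a comparison of moments using $\sum_{i,j}\tilde a(i,j)^2\le\sum_{i,j}a(i,j)^2$ --- that the paper states but does not supply. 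You would have to supply such an argument if you adopted this route; the point is simply that the ``direct'' exchangeable-pair path you floated is not the one the paper takes.
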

\begin{lemma}
\label{lem:Bennet_1d}
Fix $\A$ as in Lemma \ref{lem:Bennet_1d_lem2} and let $f$ be defined as in \eqref{eq:1dstatistics}. Then for any $\lambda\geq 0$,
$$
\Er\Big[\exp\Big\{\lambda (f-\Er[f])\Big\}\Big]
\leq \exp(54V_A\lambda^2e^{4\lambda}).
$$
Consequently, for any $t>0$,
$$
\Prb\Big(f-\Er[f] \geq t\Big)
\leq \exp\left\{-\frac{t}{12}\log{\left(1+\frac{t}{54V_A}\right)}\right\},
$$ 
where $V_A$ is introduced in \eqref{eq:defineBVforA}.
\end{lemma}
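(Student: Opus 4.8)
The plan is to obtain the stated tail inequality from the moment-generating-function estimate
\[
\Er\big[\exp\{\lambda(f-\Er[f])\}\big]\le\exp\big(54\,V_A\,\lambda^2 e^{4\lambda}\big),\qquad \lambda\ge 0,
\]
since, once this is in hand, applying Lemma~\ref{lem:Bennet_MGF} with $X=f-\Er[f]$ and $C=54V_A$ returns exactly $\Prb(f-\Er[f]\ge t)\le\exp\{-\tfrac{t}{12}\log(1+\tfrac{t}{54V_A})\}$ (the degenerate case $V_A=0$, in which $f$ is a.s.\ constant, being trivial). Thus the whole content is the displayed MGF bound, which I would prove by splitting into two ranges of $\lambda$.

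For $0\le\lambda\le 1/3$ there is nothing to do beyond Lemma~\ref{lem:Bennet_1d_lem2}: it gives $\Er[\exp\{\lambda(f-\Er[f])\}]\le\exp(12V_A\lambda^2)$, and since $e^{4\lambda}\ge 1$ we have $12V_A\lambda^2\le 54V_A\lambda^2\le 54V_A\lambda^2 e^{4\lambda}$. This already yields the sub-Gaussian (Bernstein-range) part of the claimed bound.

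The substantive case is $\lambda>1/3$, where the factor $e^{4\lambda}$ on the right forces one to upgrade from the sub-Gaussian control of Lemma~\ref{lem:Bennet_1d_lem2} to the Poisson-type control of Lemma~\ref{lem:Bennet_1d_lem1}. Here I would write $f-\Er[f]=\sum_i\hat a(i,\pi(i))$ for the row-centered matrix $\hat a(i,j):=a(i,j)-N^{-1}\sum_{j'}a(i,j')$, which has zero row sums and satisfies $N^{-1}\sum_{i,j}\hat a(i,j)^2\le V_A$, then split $\hat a=\hat a^+-\hat a^-$ into its nonnegative and nonpositive parts and bound
\[
\Er\big[e^{\lambda(f-\Er[f])}\big]\le e^{\lambda\,\Er[\sum_i\hat a^-(i,\pi(i))]}\,\Er\Big[\exp\Big\{\lambda\big(\textstyle\sum_i\hat a^+(i,\pi(i))-\Er[\sum_i\hat a^+(i,\pi(i))]\big)\Big\}\Big],
\]
where the first factor uses only $\sum_i\hat a^-(i,\pi(i))\ge 0$ (deliberately avoiding a Cauchy--Schwarz step, which would double $\lambda$ and destroy the $e^{4\lambda}$ target), and the second factor is handled by Lemma~\ref{lem:Bennet_1d_lem1} after rescaling $\hat a^+$ by its largest entry so that it has entries in $[0,1]$. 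The step I expect to be the crux is controlling the resulting Bennett ``intensity'': by the zero-row-sum property it equals $\tfrac{1}{2N}\sum_{i,j}|\hat a(i,j)|$, an $\ell_1$-type functional that can greatly exceed $V_A\sim N^{-1}\sum_{i,j}\hat a(i,j)^2$ when the entries of $\hat a$ span many magnitudes; taming it seems to require treating the entries of $\hat a$ scale by scale (entries much smaller than $1/\lambda$ falling into the regime where $e^{2s}-1$ linearizes and contributing only $O(V_A\lambda^2)$, while at each coarser scale there are at most $O(V_A\lambda^2)$ entries that can be sampled, contributing a Poisson term of order $V_A\lambda e^{c\lambda}$ with $c<4$), and then recombining so that no step of the recombination reintroduces a bad $\lambda$-dependence. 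Finally I would glue the two ranges together and absorb all numerical losses (the rescalings, the $2\sqrt3$ from any randomization step, the scale-by-scale bookkeeping) into the single constant $54$, chosen generously enough that $\exp(54V_A\lambda^2e^{4\lambda})$ dominates the piecewise bound for all $\lambda\ge 0$.
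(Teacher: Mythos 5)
Your reduction of the problem to the MGF bound is right, and for $0\le\lambda\le 1/3$ your one-line appeal to Lemma~\ref{lem:Bennet_1d_lem2} is fine. But in the range $\lambda>1/3$ your sketch has a real gap, one you yourself flag without resolving. Splitting $\hat a=\hat a^+-\hat a^-$ by \emph{sign} produces, via Lemma~\ref{lem:Bennet_1d_lem1}, a Bennett ``intensity'' $\Er\bigl[\sum_i\hat a^+(i,\pi(i))\bigr]=\frac{1}{2N}\sum_{i,j}|\hat a(i,j)|$, which by Cauchy--Schwarz is only controlled by $\tfrac12\sqrt{NV_A}$. The resulting MGF bound is of order $\exp\bigl(c\sqrt{NV_A}\,\lambda e^{2\lambda}\bigr)$, which for moderate $\lambda$ and small $V_A/N$ is far above the target $\exp(54V_A\lambda^2 e^{4\lambda})$. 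The ``scale-by-scale'' fix you propose is not carried out, and it is not clear how to recombine scales without reintroducing this $\ell_1$ dependence, so the proposal as written does not prove the lemma.

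The idea you are missing is a \emph{magnitude} truncation at the $\lambda$-adaptive threshold $\rho=1/(3\lambda)$, which is exactly what the paper does. Write $f\le f^{\downarrow}+f^{\uparrow}$ with $f^{\downarrow}=\sum_i a(i,\pi(i))\ind\{|a(i,\pi(i))|\le\rho\}$ and $f^{\uparrow}=\sum_i|a(i,\pi(i))|\ind\{|a(i,\pi(i))|>\rho\}$. After rescaling by $\rho$, the small part has entries in $[-1,1]$ and Lemma~\ref{lem:Bennet_1d_lem2} (applied at $\lambda'=1/3$) gives $\Er[e^{\lambda(f^{\downarrow}-\Er f^{\downarrow})}]\le\exp(12V_A\lambda^2)$. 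The point of the threshold is that a Chebyshev-type bound $|x|\le x^2/\rho$ for $|x|>\rho$ gives $\Er[f^{\uparrow}]\le V_A/\rho=3V_A\lambda$, so the Bennett intensity for the large part is automatically $O(V_A\lambda)$ rather than $\ell_1$-type, and Lemma~\ref{lem:Bennet_1d_lem1} gives $\Er[e^{\lambda(f^{\uparrow}-\Er f^{\uparrow})}]\le\exp(6V_A\lambda^2(e^{2\lambda}-1))$. The two pieces are then combined via $e^{a+b}\le\tfrac12(e^{2a}+e^{2b})$; this is the $\lambda\mapsto 2\lambda$ doubling you were trying to avoid, but it is accounted for and is precisely where the final $e^{4\lambda}$ comes from. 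Together with $|\Er f^{\downarrow}|\le\Er f^{\uparrow}\le 3V_A\lambda$ to control the centering offsets, this lands at $\exp(54V_A\lambda^2 e^{4\lambda})$ for all $\lambda\ge 0$ in one stroke, with no case split.
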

\begin{lemma}
\label{lem:Bennet_2d_decoupled}
Let $\pi$ and $\tau$ be defined as in Theorem \ref{thm:randomization}. Let $\mathbf{A},\mathbf{C} \in \mathbb{R}^{N \times M}$ be two fixed matrices, and assume that $\C$ is doubly centered.  Denote the generalized combinatorial bilinear form
$$
 Q_{\sf BF}=\sum_{\substack{i\in [N]\\j\in[M]}} c_{i j} a_{\pi(i)\tau(j)}. 
$$
Then for any $\lambda\ge0$,
$$
\Er\Bigg[\exp\Bigg\{\frac{\lambda ( Q_{\sf BF}-\Er[ Q_{\sf BF}])}{\nu_{\sf rect}}\Bigg\}\Bigg]
\leq \exp\left(\frac{54}{N\nu_{\sf rect}^2}\norm{\C}_{\sf op}^2\norm{\A}^2_{\sf F}\lambda^2e^{4\lambda}\right),
$$
which implies that for any $t\geq0$,
\[
\Prb\Big( Q_{\sf BF}-\Er[ Q_{\sf BF}] \geq t\Big)
\leq \exp\left\{-\frac{t}{12\nu_{\sf rect}}\cdot\log{\left(1+\frac{\nu_{\sf rect} t}{\frac{54}{N}\norm{\C}_{\sf op}^2\norm{\A}_{\sf F}^2}\right)} \right\},
\]
where 
\[
\nu_{\sf rect} := \max_{\substack{i,k\in[N]\\ \sigma,\tilde{\sigma}\in\mathcal{S}_M}}\left|\sum_{j=1}^{M} c_{i\sigma(j)}a_{k\tilde{\sigma}(j)}\right|.
\]
\end{lemma}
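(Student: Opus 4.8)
The plan is to freeze $\tau$, thereby reducing $Q_{\sf BF}$ to a one-dimensional combinatorial statistic in $\pi$, apply the combinatorial Bennett inequality of Lemma~\ref{lem:Bennet_1d} conditionally on $\tau$, and finally replace the resulting (random) variance proxy by a deterministic quantity. For a fixed realization of $\tau$, write
\[
Q_{\sf BF}=\sum_{i\in[N]}\tilde a_\tau\big(i,\pi(i)\big),\qquad \tilde a_\tau(i,k):=\sum_{j\in[M]}c_{ij}\,a_{k\tau(j)},
\]
so that $[\tilde a_\tau(i,k)]_{i,k\in[N]}$ is a fixed $N\times N$ matrix once $\tau$ is given. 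Since $\C$ is doubly centered, $\sum_{i\in[N]}c_{ij}=0$ for every $j\in[M]$, and hence $\Er[Q_{\sf BF}\mid\tau]=\tfrac1N\sum_{i,k\in[N]}\tilde a_\tau(i,k)=\tfrac1N\sum_{j\in[M]}\big(\sum_{i}c_{ij}\big)\big(\sum_{k}a_{k\tau(j)}\big)=0$; in particular $\Er[Q_{\sf BF}]=0$, so that $Q_{\sf BF}-\Er[Q_{\sf BF}]=Q_{\sf BF}$.

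Next I would verify that the frozen kernel is bounded in absolute value by $\nu_{\sf rect}$ for every realization of $\tau$: taking $\sigma=\mathrm{id}$ and $\tilde\sigma=\tau$ in the definition of $\nu_{\sf rect}$ shows $|\tilde a_\tau(i,k)|=\big|\sum_{j\in[M]}c_{ij}a_{k\tau(j)}\big|\le\nu_{\sf rect}$ for all $i,k\in[N]$. Consequently $[\tilde a_\tau(i,k)/\nu_{\sf rect}]_{i,k\in[N]}$ has all entries in $[-1,1]$, and, conditionally on $\tau$, Lemma~\ref{lem:Bennet_1d} applied to $f=Q_{\sf BF}/\nu_{\sf rect}$---whose conditional mean vanishes by the previous step---gives, for every $\lambda\ge0$,
\[
\Er\Big[\exp\big(\lambda Q_{\sf BF}/\nu_{\sf rect}\big)\;\Big|\;\tau\Big]
\le\exp\Big(\tfrac{54\,\lambda^2 e^{4\lambda}}{N\nu_{\sf rect}^2}\sum_{i,k\in[N]}\tilde a_\tau(i,k)^2\Big).
\]

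It then remains to replace $\sum_{i,k}\tilde a_\tau(i,k)^2$ by a bound free of $\tau$. Letting $\A^\tau$ denote $\A$ with its columns permuted by $\tau$, so that $(\A^\tau)_{kj}=a_{k\tau(j)}$, one has $\sum_{i,k\in[N]}\tilde a_\tau(i,k)^2=\big\|\C(\A^\tau)^\top\big\|_{\sf F}^2\le\|\C\|_{\sf op}^2\,\|\A^\tau\|_{\sf F}^2=\|\C\|_{\sf op}^2\,\|\A\|_{\sf F}^2$, where the inequality bounds the Frobenius norm of a matrix product by the operator norm of the left factor times the Frobenius norm of the right one, and the last equality uses that transposition and column permutation both preserve the Frobenius norm. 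Since the right-hand side is deterministic, taking $\Er_\tau$ of the conditional bound yields the asserted MGF estimate, and the tail bound then follows by applying Lemma~\ref{lem:Bennet_MGF} with $X=Q_{\sf BF}/\nu_{\sf rect}$ and $C=\tfrac{54}{N\nu_{\sf rect}^2}\|\C\|_{\sf op}^2\|\A\|_{\sf F}^2$, followed by the rescaling $t\mapsto t/\nu_{\sf rect}$. Each step is short; the places that need care are precisely the two identifications just made---recognizing $\tilde a_\tau(i,k)$ as a rearrangement sum dominated by $\nu_{\sf rect}$, and applying the operator--Frobenius inequality in the direction that lets the $\tau$-permuted factor $\A^\tau$ be measured in Frobenius norm and thereby shed its dependence on $\tau$.
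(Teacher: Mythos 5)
Your proposal is correct and follows essentially the same route as the paper's proof: condition on $\tau$, identify the frozen kernel $\tilde a_\tau(i,k)=\sum_j c_{ij}a_{k\tau(j)}$ (the paper's $b_{ik}\nu_{\sf rect}$), observe $|\tilde a_\tau(i,k)|\le\nu_{\sf rect}$ so that the normalized kernel has entries in $[-1,1]$, apply Lemma~\ref{lem:Bennet_1d} conditionally, bound the conditional variance proxy by $\|\C(\A^\tau)^\top\|_{\sf F}^2\le\|\C\|_{\sf op}^2\|\A\|_{\sf F}^2$ uniformly in $\tau$, and finish with Lemma~\ref{lem:Bennet_MGF}. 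The only stylistic difference is that you make the operator--Frobenius inequality and the column-permutation invariance explicit where the paper states the bound on $\Er[\Sigma_\tau^2]$ directly; your check that $\Er[Q_{\sf BF}\mid\tau]=0$ via $\sum_i c_{ij}=0$ matches the paper's argument.
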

\begin{lemma}
\label{lem:decoupling_zero_diagonal}Under the setting of Theorem \ref{thm:Bennett}, denote $d(i,j,k,\ell) = a_{ij}c_{k\ell }$ for any $i,j,k,\ell\in [N]$ and let $I$,  $J$, $\pi_1$, $\pi_2$ be defined as in Theorem \ref{thm:decoupling}. Then for any convex function $\varphi$,
\begin{align*}
   &\quad\Er\left[\varphi\left(\sum_{i\ne j} d(i,j,\pi(i),\pi(j))\right)\right]\\
   &\leq\frac{1}{2}\Er \left[\varphi\left(2\alpha\sum_{i\in I,j\in I^c}\tilde{d}_{I,J}(i,j,\pi_1(i),\pi_2(j))\right)\right]+ \frac{1}{2}\Er \left[\varphi\left(2\beta\sum_{i\ne j} d(i,j,\pi(j),\pi(i))\right)\right],
\end{align*}
where 
\begin{equation}\label{DefAlphaBeta}
    \alpha := \frac{N(N-1)(N-2)(N-3)}{(N_1-1)(N-N_1-1)\big(N^2 - 3N + 1\big)}, ~~\beta := \frac{1}{N^2 - 3N + 1}, 
\end{equation}
and $\tilde d_{I,J}$ was introduced in \eqref{eq:tilded-IJ}.
\end{lemma}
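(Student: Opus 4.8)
The plan is to reduce the inequality to an exact linear identity and then conclude by two applications of Jensen's inequality. The shape of the bound---$\varphi$ of the original statistic controlled by $\tfrac12\varphi(2\alpha\,\cdot)+\tfrac12\varphi(2\beta\,\cdot)$---already points at this. Abbreviate
\[
S:=\sum_{i\ne j}d(i,j,\pi(i),\pi(j)),\qquad
T:=\sum_{i\ne j}d(i,j,\pi(j),\pi(i)),\qquad
C:=\sum_{i\in I,\,j\in I^c}\tilde d_{I,J}(i,j,\pi_1(i),\pi_2(j)),
\]
and aim to establish the identity
\begin{equation}\label{eq:proposal-identity}
S=\alpha\,\Er[C\mid\pi]+\beta\,T .
\end{equation}
Granting \eqref{eq:proposal-identity}, the conclusion is immediate: since $S=\tfrac12\bigl(2\alpha\,\Er[C\mid\pi]\bigr)+\tfrac12\bigl(2\beta T\bigr)$, convexity of $\varphi$ gives $\varphi(S)\le\tfrac12\varphi\bigl(2\alpha\,\Er[C\mid\pi]\bigr)+\tfrac12\varphi(2\beta T)$; the conditional Jensen inequality bounds $\varphi\bigl(2\alpha\,\Er[C\mid\pi]\bigr)\le\Er[\varphi(2\alpha C)\mid\pi]$; and taking $\Er$ over $\pi$ yields the stated inequality.

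To make sense of $\Er[C\mid\pi]$ I would first realize the decoupled system as a function of $\pi$: let $I$ be uniform over the size-$N_1$ subsets of $[N]$, independent of $\pi$, and set $J:=\pi(I)$, $\pi_1:=\pi|_I$, $\pi_2:=\pi|_{I^c}$. A short counting argument shows that $I$ and $J$ are then independent, each uniform over size-$N_1$ subsets, that $\pi_1\mid(I,J)$ and $\pi_2\mid(I,J)$ are uniform over bijections $I\to J$ and $I^c\to J^c$ respectively, and that they are conditionally independent given $(I,J)$; hence $(I,J,\pi_1,\pi_2)$ has exactly the law prescribed in Theorem~\ref{thm:decoupling}, so that $\Er[\varphi(2\alpha C)]$ is precisely the first term on the right-hand side of the lemma. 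Under this coupling $\pi(i)=\pi_1(i)$ for $i\in I$ and $\pi(i)=\pi_2(i)$ for $i\in I^c$, so $C$ is a deterministic function of $(\pi,I)$ and $\Er[C\mid\pi]=\Er_I[C]$ is a deterministic function of $\pi$.

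The heart of the argument---and the step I expect to be the main obstacle---is the identity \eqref{eq:proposal-identity}. I would exploit the product form $d(i,j,k,\ell)=a_{ij}c_{k\ell}$: by \eqref{eq:tilded-IJ} the centered tensor $\tilde d_{I,J}$ factors through the doubly centered versions of the submatrices $[a_{ij}]_{i\in I,\,j\in I^c}$ and $[c_{k\ell}]_{k\in J,\,\ell\in J^c}$. Expanding this factorization into its monomials, substituting $k=\pi(i)$ and $\ell=\pi(j)$, and taking $\Er_I$, the expectation of each monomial reduces to a ratio of falling factorials in $N,\,N_1,\,N-N_1$ times a partial sum of entries of $\A$ and $\C$ over a sub-block determined by $I$ or by its image $\pi(I)$. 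Here the hypotheses of Theorem~\ref{thm:Bennett}---that $\C$ is doubly centered and has zero diagonal---are invoked repeatedly to collapse those partial sums (for instance $\sum_{k\in\pi(I)}c_{k\ell}=-\sum_{k\in\pi(I^c)}c_{k\ell}$ and $c_{kk}=0$), so that every ``intermediate'' multi-index sum telescopes away and only the two genuine bilinear sums $S$ and $T$ survive. The leading contribution is $\tfrac{N_1(N-N_1)}{N(N-1)}\,S$, while the $O(1/N)$ corrections account for the passage from $\tfrac{N_1(N-N_1)}{N(N-1)}$ to $1/\alpha$ together with the small multiple $\beta/\alpha$ of $T$; matching the rational coefficients produces exactly the constants in \eqref{DefAlphaBeta}. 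This bookkeeping---deciding which monomials survive the centering and verifying that the surviving coefficients line up---is the only delicate part; the coupling and the two Jensen steps are routine.
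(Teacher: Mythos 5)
Your proposal is correct and follows essentially the same route as the paper: it identifies the exact identity
\[
\sum_{i\ne j}d\bigl(i,j,\pi(i),\pi(j)\bigr)=\alpha\,\Er\Biggl[\sum_{i\in I,\,j\in I^c}\tilde d_{I,\pi(I)}\bigl(i,j,\pi(i),\pi(j)\bigr)\,\Big|\,\pi\Biggr]+\beta\sum_{i\ne j}d\bigl(i,j,\pi(j),\pi(i)\bigr),
\]
which follows from a hypergeometric conditional-expectation calculation in which the degeneracy of $\C$ (doubly centered with zero diagonal) kills all intermediate partial-sum terms, and then concludes by convexity of $\varphi$ plus conditional Jensen and the observation that the coupling $J=\pi(I)$, $\pi_1=\pi|_I$, $\pi_2=\pi|_{I^c}$ realizes the law of $(I,J,\pi_1,\pi_2)$ from Theorem~\ref{thm:decoupling}. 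The only substantive difference is one of execution: where you sketch the ``bookkeeping'' for the identity (neatly noting that $\tilde d_{I,J}=\tilde a_I\otimes\tilde c_J$ factors, which the paper does not exploit explicitly), the paper carries the binomial-coefficient computations all the way through.
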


\subsection{Proof of Theorem \ref{thm:decoupling}}
Introduce the explicit definition of $\tilde{d}_{I,J}$: 
\begin{align}\label{eq:tilded-IJ}
&\tilde{d}_{I,J}(i,j,k,\ell):=d(i,j,k,\ell)\notag\\
&-\frac{1}{|I|}\sum_{i\in I} d(i,j,k,\ell)-\frac{1}{|I^{c}|}\sum_{j\in I^c} d(i,j,k,\ell)-\frac{1}{|J|}\sum_{k\in J} d(i,j,k,\ell)-\frac{1}{|J^{c}|}\sum_{\ell\in J^c} d(i,j,k,\ell) \notag\\
&+\frac{1}{|I||I^c|}\sum_{i\in I,j\in I^c} d(i,j,k,\ell)+\frac{1}{|I||J|}\sum_{i\in I,k\in J} d(i,j,k,\ell)+\frac{1}{|I||J^c|}\sum_{i\in I,\ell\in J^c} d(i,j,k,\ell) \notag\\
&+\frac{1}{|I^c||J|}\sum_{j\in I^c,k\in J} d(i,j,k,\ell)+\frac{1}{|I^c||J^c|}\sum_{j\in I^c,\ell\in J^c} d(i,j,k,\ell)+\frac{1}{|J||I^c|}\sum_{k\in J,\ell\in J^c} d(i,j,k,\ell) \notag\\
&-\frac{1}{|I||I^c||J|}\sum_{i\in I,j\in I^c,k\in J} d(i,j,k,\ell)-\frac{1}{|I||I^c||J^c|}\sum_{i\in I,j\in I^c,\ell\in J^c} d(i,j,k,\ell) \notag\\
&-\frac{1}{|I||J||J^c|}\sum_{i\in I,k\in J,\ell\in J^c} d(i,j,k,\ell)-\frac{1}{|I^c||J||J^c|}\sum_{j\in I^c,k\in J,\ell\in J^c} d(i,j,k,\ell) \notag\\
&+\frac{1}{|I||I^c||J||J^c|}\sum_{i\in I,j\in I^c,k\in J,\ell\in J^c}d(i,j,k,\ell).
\end{align}

\begin{proof}[Proof of Theorem \ref{thm:decoupling}]
We have 
$$\begin{aligned}
        &\quad\Er\left[\left.\sum_{i\in I,j\in I^c}\tilde{d}_{I,\pi(I)}(i,j,\pi(i),\pi(j))\right|\pi\right]\\
        &=\Er\left[\sum_{i\in I,j\in I^c}d(i,j,\pi(i),\pi(j))-\sum_{i\in I,j\in I^c}\frac{1}{|I|}\sum_{k\in \pi(I)} d(i,j,k,\pi(j))\right.\\
        &\quad-\left.\left.\sum_{i\in I,j\in I^c}\frac{1}{|I^{c}|}\sum_{\ell\in \pi(I^{c})} d(i,j,\pi(i),\ell)+\sum_{i\in I,j\in I^c}\frac{1}{|I||I^c|}\sum_{k\in \pi(I),\ell\in\pi(I^c)} d(i,j,k,\ell)\right|\pi\right]\\
        &=\Er\left[\left.\sum_{i\in I,j\in I^c}d(i,j,\pi(i),\pi(j))\right|\pi\right]-\Er\left[\left.\sum_{i\in I,j\in I^c}\frac{1}{|I|}\sum_{k\in \pi(I)} d(i,j,k,\pi(j))\right|\pi\right]\\
        &\quad-\Er\left[\left.\sum_{i\in I,j\in I^c}\frac{1}{|I^{c}|}\sum_{\ell\in \pi(I^{c})} d(i,j,\pi(i),\ell)\right|\pi\right]+\Er\left[\left.\sum_{i\in I,j\in I^c}\frac{1}{|I||I^c|}\sum_{k\in \pi(I),\ell\in\pi(I^c)} d(i,j,k,\ell)\right|\pi\right].
    \end{aligned}
$$
For the first and second terms on the right hand side, we have
$$
\Er\left[\left.\sum_{i\in I,j\in I^c}d(i,j,\pi(i),\pi(j))\right|\pi\right] = \frac{\binom{N-2}{N_1-1}}{\binom{N}{N_1}}\sum_{i\neq j}d(i,j,\pi(i),\pi(j)) = \frac{N_1(N-N_1)}{N(N-1)}\sum_{i\neq j}d(i,j,\pi(i),\pi(j)), 
$$
and
\begin{align*}
&\quad\Er\left[\sum_{i\in I, j\in I^{c}}\frac{1}{|I|}\sum_{k\in \pi(I)} d(i,j,k,\pi(j))\middle|\pi\right] \\
&= \frac{1}{N_1}\sum_{i\neq j}\sum_{k=1}^{N}
d(i,j,k,\pi(j))\,
\Prb\left(i\in I, j\notin I, \pi^{-1}(k)\in I\right) \\
&= \frac{1}{N_1}\sum_{i\ne j}\Bigg[
\sum_{\pi^{-1}(k)\notin\{i,j\},k}
d(i,j,k,\pi(j))\frac{\binom{N-3}{N_1-2}}{\binom{N}{N_1}}
+
d\big(i,j,\pi(i),\pi(j)\big)\frac{\binom{N-2}{N_1-1}}{\binom{N}{N_1}}
+
d\big(i,j,\pi(j),\pi(j)\big)\cdot 0
\Bigg] \\
&=
\frac{\binom{N-3}{\,N_1-2\,}}{N_1\binom{N}{\,N_1\,}}
\sum_{i\ne j}\sum_{k=1}^{N} d(i,j,k,\pi(j))
+\left(
\frac{\binom{N-2}{\,N_1-1\,}}{N_1\binom{N}{\,N_1\,}}
-\frac{\binom{N-3}{\,N_1-2\,}}{N_1\binom{N}{\,N_1\,}}
\right)\sum_{i\ne j} d(i,j,\pi(i),\pi(j))
\\
&\quad-\frac{\binom{N-3}{\,N_1-2\,}}{N_1\binom{N}{\,N_1\,}}
\sum_{i\ne j} d(i,j,\pi(j),\pi(j)) \\
&=\frac{(N-N_1)(N-N_1-1)}{N(N-1)(N-2)} \sum_{i\ne j} d(i,j,\pi(i),\pi(j))
-
\frac{(N_1-1)(N-N_1)}{N(N-1)(N-2)} \sum_{i\ne j} d(i,j,\pi(j),\pi(j))\\
&=\frac{(N-N_1)(N-N_1-1)}{N(N-1)(N-2)} \sum_{i\ne j} d(i,j,\pi(i),\pi(j))+
\frac{(N_1-1)(N-N_1)}{N(N-1)(N-2)} \sum_{i} d(i,i,\pi(i),\pi(i)).
\end{align*}
Similarly, for the third term,
\begin{align*}
&\quad\Er\left[\sum_{i\in I, j\in I^{c}}\frac{1}{|I^c|}\sum_{\ell\in \pi(I^c)} d(i,j,\pi(i),\ell)\middle|\pi\right] \\
&=\frac{N_1(N_1-1)}{N(N-1)(N-2)} \sum_{i\ne j} d(i,j,\pi(i),\pi(j))
-
\frac{(N-N_1-1)N_1}{N(N-1)(N-2)} \sum_{i\ne j} d(i,j,\pi(i),\pi(i))\\
&=\frac{N_1(N_1-1)}{N(N-1)(N-2)} \sum_{i\ne j} d(i,j,\pi(i),\pi(j))+
\frac{(N-N_1-1)N_1}{N(N-1)(N-2)} \sum_{i=1}^N d(i,i,\pi(i),\pi(i)).
\end{align*}
For the fourth term, we have 
\begin{align*}
&\quad\Er\left[\sum_{i\in I, j\in I^{c}}\frac{1}{|I||I^{c}|}
\sum_{k\in \pi(I), \ell\in \pi(I^{c})} d(i,j,k,\ell)\middle|\pi\right] \\[2mm]
&= \frac{1}{N_1(N-N_1)}\sum_{i\ne j}\sum_{k\ne \ell}
d(i,j,k,\ell)\Prb\left(i\in I, j\notin I, \pi^{-1}(k)\in I, \pi^{-1}(\ell)\notin I\right) \\
&= \frac{1}{N_1(N-N_1)}\sum_{i\ne j}\Bigg[
\frac{\binom{N-4}{N_1-2}}{\binom{N}{N_1}}
\sum_{\substack{k\notin\{\pi(i),\pi(j)\}\\\ell\notin\{\pi(i),\pi(j)\}, \ell\ne k}}
d(i,j,k,\ell)
+
\frac{\binom{N-3}{N_1-1}}{\binom{N}{N_1}}
\sum_{\substack{\ell\notin\{\pi(i),\pi(j)\}}}
d(i,j,\pi(i),\ell) \\[1mm]
&\quad+
\frac{\binom{N-3}{N_1-2}}{\binom{N}{N_1}}
\sum_{\substack{k\notin\{\pi(i),\pi(j)\}}}
d(i,j,k,\pi(j))\quad+
\frac{\binom{N-2}{N_1-1}}{\binom{N}{N_1}}
d(i,j,\pi(i),\pi(j))
\Bigg] \\
&= \frac{1}{N_1(N-N_1)}\sum_{i\ne j}\Bigg[
\frac{N_1(N_1-1)(N-N_1)(N-N_1-1)}{N(N-1)(N-2)(N-3)}
\sum_{\substack{k\notin\{\pi(i),\pi(j)\}\\ \ell\notin\{\pi(i),\pi(j)\}, \ell\ne k}}
d(i,j,k,\ell) \\
&\quad+\frac{N_1(N-N_1)(N-N_1-1)}{N(N-1)(N-2)}
\sum_{\substack{\ell\notin\{\pi(i),\pi(j)\}}}
d(i,j,\pi(i),\ell)
+
\frac{N_1(N_1-1)(N-N_1)}{N(N-1)(N-2)}
\sum_{\substack{k\notin\{\pi(i),\pi(j)\}}}
d(i,j,k,\pi(j)) \\[1mm]
&\quad+\frac{N_1(N-N_1)}{N(N-1)}d(i,j,\pi(i),\pi(j))
\Bigg]\\
&= -\frac{1}{N_1(N-N_1)}\sum_{i\ne j}\Bigg[
\frac{N_1(N_1-1)(N-N_1)(N-N_1-1)}{N(N-1)(N-2)(N-3)}\big(-d(i,j,\pi(i),\pi(j))-d(i,j,\pi(j),\pi(i))\\
&\quad- 2d(i,j,\pi(i),\pi(i))-2d(i,j,\pi(j),\pi(j))+\sum_k d(i,j,k,k)\big)\\
&\quad+\frac{N_1(N-N_1)(N-N_1-1)}{N(N-1)(N-2)}
\big(d(i,j,\pi(i),\pi(i))+d(i,j,\pi(i),\pi(j))\big)\\
&\quad+
\frac{N_1(N_1-1)(N-N_1)}{N(N-1)(N-2)}
(d(i,j,\pi(i),\pi(j))+d(i,j,\pi(j),\pi(j))) \\
&\quad-\frac{N_1(N-N_1)}{N(N-1)}d(i,j,\pi(i),\pi(j))
\Bigg]\\
&=\frac{(N_1-1)(N-N_1-1)}{N(N-1)(N-2)(N-3)} \sum_{i\ne j} d(i,j,\pi(i),\pi(j))+\frac{(N_1-1)(N-N_1-1)}{N(N-1)(N-2)(N-3)} \sum_{i\ne j} d(i,j,\pi(j),\pi(i))\\
&\quad-\left(\frac{N-N_1-1}{N(N-1)(N-2)}-\frac{2(N_1-1)(N-N_1-1)}{N(N-1)(N-2)(N-3)}\right)
\sum_{i\ne j}d(i,j,\pi(i),\pi(i))\\
&\quad-
\left(\frac{N_1-1}{N(N-1)(N-2)}-\frac{2(N_1-1)(N-N_1-1)}{N(N-1)(N-2)(N-3)}\right)
\sum_{i\ne j}d(i,j,\pi(j),\pi(j)) \\
&\quad -\frac{(N_1-1)(N-N_1-1)}{N(N-1)(N-2)(N-3)} \sum_{i\ne j}\sum_{k=1}^N d(i,j,k,k)\\
&=\frac{(N_1-1)(N-N_1-1)}{N(N-1)(N-2)(N-3)} \sum_{i\ne j} d(i,j,\pi(i),\pi(j))+\frac{(N_1-1)(N-N_1-1)}{N(N-1)(N-2)(N-3)} \sum_{i\ne j} d(i,j,\pi(j),\pi(i))\\
&\quad + \left(\frac1{N(N-1)}-\frac{4(N_1-1)(N-N_1-1)}{N(N-1)(N-2)(N-3)}\right) \sum_{i=1}^N d(i,i,\pi(i),\pi(i))\\
&\quad+\frac{(N_1-1)(N-N_1-1)}{N(N-1)(N-2)(N-3)} \sum_{i,k\in[N]} d(i,i,k,k).
\end{align*}
Therefore,
\begin{align*}
        &\quad\Er\left[\left.\sum_{i\in I,j\in I^c}\tilde{d}_{I,\pi(I)}(i,j,\pi(i),\pi(j))\right|\pi\right]\\
        &=\Er\left[\left.\sum_{i\in I,j\in I^c}d(i,j,\pi(i),\pi(j))\right|\pi\right]-\Er\left[\left.\sum_{i\in I,j\in I^c}\frac{1}{|I|}\sum_{k\in \pi(I)} d(i,j,k,\pi(j))\right|\pi\right]\\
        &\quad-\Er\left[\left.\sum_{i\in I,j\in I^c}\frac{1}{|I^{c}|}\sum_{\ell\in \pi(I^{c})} d(i,j,\pi(i),\ell)\right|\pi\right]+\Er\left[\left.\sum_{i\in I,j\in I^c}\frac{1}{|I||I^c|}\sum_{k\in \pi(I),\ell\in\pi(I^c)} d(i,j,k,\ell)\right|\pi\right]\\
        &=\frac{(N_1-1)(N-N_1-1)\big(N^2 - 3N + 1\big)}{N(N-1)(N-2)(N-3)}\left(\sum_{i\ne j} d(i,j,\pi(i),\pi(j))-\Er\left[\sum_{i\ne j} d(i,j,\pi(i),\pi(j))\right]\right)-\Delta(\pi),
    \end{align*}
where

$$\begin{aligned}
    \Delta(\pi) &= \frac{2(N_1-1)(N-N_1-1)}{N(N-2)(N-3)}\sum_{i=1}^N d(i,i,\pi(i),\pi(i))\\
    &\quad-\frac{(N_1-1)(N-N_1-1)}{N(N-1)(N-2)(N-3)}\sum_{i\ne j} d(i,j,\pi(j),\pi(i))\\
    &\quad -\frac{(N_1-1)(N-N_1-1)}{N(N-1)(N-2)(N-3)} \sum_{i,k\in[N]} d(i,i,k,k)\\
    &\quad-\frac{(N_1-1)(N-N_1-1)\big(N^2 - 3N + 1\big)}{N(N-1)(N-2)(N-3)}\Er\left(\sum_{i\ne j} d(i,j,\pi(i),\pi(j))\right).
\end{aligned}
$$
By the definition of $\alpha$ in \eqref{DefAlphaBeta}, we have
$$       \quad\Er\left[\left.\alpha\left(\sum_{i\in I,j\in I^c}\tilde{d}_{I,\pi(I)}(i,j,\pi(i),\pi(j))+\Delta(\pi)\right)\right|\pi\right]=\sum_{i\ne j} d(i,j,\pi(i),\pi(j))-\Er\left[\sum_{i\ne j} d(i,j,\pi(i),\pi(j))\right].
$$
Since $x\to e^{\lambda x}$ is convex, by Jensen's inequality,
$$
\begin{aligned}
   &\quad\Er\left[\exp\left(\lambda\left(\sum_{i\ne j} d(i,j,\pi(i),\pi(j))-\Er\left[\sum_{i\ne j} d(i,j,\pi(i),\pi(j))\right]\right)\right)\right]\\
   &= \Er \left[\exp\left(\lambda\Er\left[\left.\alpha\left(\sum_{i\in I,j\in I^c}\tilde{d}_{I,\pi(I)}(i,j,\pi(i),\pi(j))+\Delta(\pi)\right)\right|\pi\right]\right)\right]\\
   &\leq\Er \left[\exp\left(\lambda\alpha\left(\sum_{i\in I,j\in I^c}\tilde{d}_{I,\pi(I)}(i,j,\pi(i),\pi(j))+\Delta(\pi)\right)\right)\right]\\
   &\leq \sqrt{\Er[\exp{\left(2\lambda\alpha\Delta(\pi)\right)}]}\sqrt{\Er \left[\exp\left(2\lambda\alpha\sum_{i\in I,j\in I^c}\tilde{d}_{I,\pi(I)}(i,j,\pi(i),\pi(j))\right)\right]}.
\end{aligned}
$$
Since
\[
\Er\left(\sum_{i\ne j} d(i,j,\pi(i),\pi(j))\right) = \frac{1}{N(N-1)}\left(\sum_{i\ne j}\sum_{k\neq \ell} d(i,j,k,\ell)\right) = \frac{1}{N(N-1)}\left(\sum_{i,k\in[N]} d(i,i,k,k)\right),
\]
similarly,
\[
\Er\left(\sum_{i\ne j} d(i,j,\pi(j),\pi(i))\right) = \frac{1}{N(N-1)}\left(\sum_{i\ne j}\sum_{k\neq \ell} d(i,j,k,\ell)\right) = \frac{1}{N(N-1)}\left(\sum_{i,k\in[N]} d(i,i,k,k)\right),
\]
and
\[
\Er\left[\sum_{i=1}^N d(i,i,\pi(i),\pi(i))\right] = \frac{1}{N}\sum_{i,k\in[N]} d(i,i,k,k),
\]
we have,
\[\begin{aligned}
    \Delta(\pi) &= -\frac{(N_1-1)(N-N_1-1)}{N(N-1)(N-2)(N-3)}\left(\sum_{i\ne j} d(i,j,\pi(j),\pi(i))-\Er\left[\sum_{i\ne j} d(i,j,\pi(j),\pi(i))\right]\right)\\
    &\quad+\frac{2(N_1-1)(N-N_1-1)}{N(N-2)(N-3)}\left(\sum_{i=1}^N d(i,i,\pi(i),\pi(i))-\Er\left[\sum_{i=1}^N d(i,i,\pi(i),\pi(i))\right]\right).
\end{aligned}
\]
For simplicity, denote $\Delta_1$ and $\Delta_2$ to be the first and second term of $\Delta$. Since $\Er[\Delta_1]=0$ and \[
\alpha|\Delta_1|\leq \frac{2N(N-1)}{N^2-3N+1}\sqrt{\frac{1}{N(N-1)}\sum_{i\neq j}d(i,j,\pi(i),\pi(j))^2}\leq  \frac{5}{2}\sqrt{\frac{1}{N^2}\sum_{i\neq j}d(i,j,\pi(i),\pi(j))^2}\leq \frac{5}{2}\tilde{V},
\]
by $e^x\leq e^{x^2}+x$, we have
\[
\Er[\exp{\left(4\lambda\alpha\Delta_1(\pi)\right)}]\leq \Er[\exp{\left(16\lambda^2\alpha^2\Delta_1(\pi)^2\right)}]\leq \exp\left(100\lambda^2\tilde{V}\right).
\]
Applying Lemma \ref{lem:1d} to $[\xi(i,k)]$, we have
\[\begin{aligned}
\Er[\exp{\left(4\lambda\alpha\Delta_2(\pi)\right)}]&= \Er\left[\exp{\left(\frac{4\lambda(N-1)}{N^2-3N+1}\sum_{i=1}^N\xi(i,\pi(i))\right)}\right]\\
&\leq \Er\left[\exp{\left(\frac{4\lambda}{N}\sum_{i=1}^N\xi(i,\pi(i))\right)}\right]\\
&\leq\Er\left[\exp{\left(\frac{8\sqrt{3}\lambda}{N}\sum_{i=1}^N\xi(i,\pi(i))G_i\right)}\right]\\
&=\Er\left[\exp{\left(\frac{96\lambda^2}{N^2}\sum_{i=1}^N\xi(i,\pi(i))^2\right)}\right]\\
&\leq\Er\left[\exp{\left(\frac{96\lambda^2}{N^2}\sum_{i,k\in[N]}\xi(i,k)^2\right)}\right]\\
&\leq\Er\left[\exp{\left(\frac{96\lambda^2}{N^2}\sum_{i,k\in[N]}d(i,i,k,k)^2\right)}\right]\\
&\leq \exp\left(96\lambda^2\tilde{V}\right).
\end{aligned}\]
Therefore,
\[
\Er[\exp{\left(2\lambda\alpha\Delta(\pi)\right)}]\leq \sqrt{\Er[\exp{\left(4\lambda\alpha\Delta_1(\pi)\right)}]\Er[\exp{\left(4\lambda\alpha\Delta_2(\pi)\right)}]}\leq \exp\left(98\lambda^2\tilde{V}\right).
\]
Since
\[
\alpha\leq 4+\frac{8}{N-2},
\]
we have
\begin{align}\label{eq:converttopi1pi2}
   &\quad\Er\left[\exp\left(\lambda\left(\sum_{i\ne j} d(i,j,\pi(i),\pi(j))-\Er\left[\sum_{i\ne j} d(i,j,\pi(i),\pi(j))\right]\right)\right)\right]\notag\\
   &\leq \exp\left(50\lambda^2\tilde{V}\right)\sqrt{\Er \left[\exp\left(\lambda\left(8+\frac{16}{N-2}\right)\sum_{i\in I,j\in I^c}\tilde{d}_{I,\pi(I)}(i,j,\pi(i),\pi(j))\right)\right]}\notag\\
   &\leq \exp\left(50\lambda^2\tilde{V}\right)\sqrt{\Er \left[\exp\left(9\lambda\left(\sum_{i\in I,j\in I^c}\tilde{d}_{I,\pi(I)}(i,j,\pi(i),\pi(j))\right)\right)\right]}\notag\\
   &= \exp\left(50\lambda^2\tilde{V}\right)\sqrt{\Er\left[\Er \left[\Er \left[\exp\left(9\lambda\left(\sum_{i\in I,j\in I^c}\tilde{d}_{I,J}(i,j,\pi(i),\pi(j))\right)\right)\middle| \pi(I)=J\right]\middle| I,J\right]\right]}\notag\\
  &= \exp\left(50\lambda^2\tilde{V}\right)\sqrt{\Er\left[\Er \left[\exp\left(9\lambda\left(\sum_{i\in I,j\in I^c}\tilde{d}_{I,J}(i,j,\pi_1(i),\pi_2(j))\right)\right)\middle| I,J\right]\right]}\notag\\
    &= \exp\left(50\lambda^2\tilde{V}\right)\sqrt{\Er\left[\exp\left(9\lambda\left(\sum_{i\in I,j\in I^c}\tilde{d}_{I,J}(i,j,\pi_1(i),\pi_2(j))\right)\right)\right]},
\end{align}
which completes the proof.
\end{proof}

\subsection{Proof of Theorem \ref{thm:randomization}}
\begin{proof}[Proof of Theorem \ref{thm:randomization}]
    By the law of total probability, we may assume that $\D$ is a fixed tensor.     Let $\A_\tau = [a_{\tau}(i,k)]\in \mathbb{R}^{N\times N}$ be a random matrix given by\[
    a_\tau(i,k) = \sum_{j=1}^M d(i,j,k,\tau(j)).
    \]
    Since $d(i,\cdot,k,\cdot)=0$, the entries of $\A_\tau$ sum to 0. Moreover, $\A_{\tau}$ is independent of $\pi$. Then by Lemma \ref{lem:1d},
    \[
\Er\left[\exp\left(\lambda\sum_{i=1}^N a_\tau(i,\pi(i))\right)\right]
\leq \Er\left[\exp\left(2\sqrt{3}\lambda\sum_{i=1}^N a_\tau(i,\pi(i))G_i\right)\right],\
    \]
    or equivalently,
    $$
\Er\left[\exp\left(\lambda\sum_{\substack{i\in[N]\\ j\in[M]}} d(i,j,\pi(i),\tau(j))\right)\right]
\leq \Er\left[\exp\left(2\sqrt{3}\lambda\sum_{\substack{i\in[N]\\ j\in[M]}} d(i,j,\pi(i),\tau(j))G_i\right)\right].
$$
Let $\A_\pi=[a_{\pi}(j,l)]\in \mathbb{R}^{M\times M}$ be a random matrix  independent of $\tau$, given by\[
    a_\pi(j,\ell) = \sum_{i=1}^N d(i,j,\pi(i),\ell)G_i.
    \]
    Since $d(\cdot,j,\cdot,\ell)=0$, the entries of $\A_\pi$ sum to 0 as well. Then by Lemma \ref{lem:1d},
    \[
\Er\left[\exp\left(2\sqrt{3}\lambda\sum_{j=1}^M a_{\pi}(j,\tau(j))\right)\right]
\leq \Er\left[\exp\left(12\lambda\sum_{j=1}^M a_{\pi}(j,\tau(j))G'_j\right)\right],
    \]
    or equivalently,
    $$
\Er\left[\exp\left(2\sqrt{3}\lambda\sum_{\substack{i\in[N]\\ j\in[M]}} d(i,j,\pi(i),\tau(j))G_i\right)\right]
\leq \Er\left[\exp\left(12\lambda\sum_{\substack{i\in[N]\\ j\in[M]}} d(i,j,\pi(i),\tau(j))G_iG'_j\right)\right].
$$
Therefore, we have
\begin{align*}
\Er\left[\exp\left(\lambda\sum_{\substack{i\in[N]\\ j\in[M]}} d(i,j,\pi(i),\tau(j))\right)\right]
&\leq \Er\left[\exp\left(2\sqrt{3}\lambda\sum_{\substack{i\in[N]\\ j\in[M]}} d(i,j,\pi(i),\tau(j))G_i\right)\right]\\
&\leq\Er\left[\exp\left(12\lambda\sum_{\substack{i\in[N]\\ j\in[M]}} d(i,j,\pi(i),\tau(j))G_iG'_j\right)\right],
\end{align*}
which completes the proof.
\end{proof}
\subsection{Proof of Theorem \ref{thm:main}}
\begin{proof}[Proof of Theorem~\ref{thm:main}]
By the Cauchy-Schwarz inequality, we have
\begin{align}
    &\quad\Er\left[\exp\left(\lambda\left({\sum_{i,j\in[N]} d(i,j,\pi(i),\pi(j))}-\Er\left[{\sum_{i,j\in[N]} d(i,j,\pi(i),\pi(j))}\right]\right)\right)\right]\notag\\
    &\leq \sqrt{\Er\left[\exp\left(2\lambda\left({\sum_{i=1}^N d(i,i,\pi(i),\pi(i))}-\Er\left[{\sum_{i=1}^N d(i,i,\pi(i),\pi(i))}\right]\right)\right)\right]}\notag\\
    &\quad \cdot \sqrt{\Er\left[\exp\left(2\lambda\left({\sum_{i\neq j} d(i,j,\pi(i),\pi(j))}-\Er\left[{\sum_{i\neq j} d(i,j,\pi(i),\pi(j))}\right]\right)\right)\right]}. \label{eq:han1}
    \end{align}
For the first term on the right hand side of \eqref{eq:han1}, since
\[
\sum_{i,k\in[N]} \xi(i,k) =0,
\]
by Lemmas \ref{lem:1d} and \ref{lem:1d_MGF}, we have
\begin{align*}
&\quad \Er\left[\exp\left(2\lambda\left({\sum_{i=1}^N d(i,i,\pi(i),\pi(i))}-\Er\left[{\sum_{i=1}^N d(i,i,\pi(i),\pi(i))}\right]\right)\right)\right]\\
&=\Er\left[\exp\left(2\lambda{\sum_{i=1}^N \xi(i,\pi(i))}\right)\right]\\
&\leq \Er\left[\exp\left(4\sqrt{3}\lambda{\sum_{i=1}^N \xi(i,\pi(i))G_i}\right)\right]\\
&\leq\exp\left(\frac{V_1(4\sqrt{3}\lambda)^2-V_1B_1^2(4\sqrt{3}\lambda)^4/4}{2(1-B_1^2(4\sqrt{3}\lambda)^2/2)}\right)\\
&\leq\exp\left(\frac{V_1(4\sqrt{3}\lambda)^2}{2(1-B_1^2(4\sqrt{3}\lambda)^2/2)}\right)\\
&=\exp\left(\frac{48V_1\lambda^2}{2(1-24B_1^2\lambda^2)}\right),
\end{align*}
where $$V_1 = \frac{1}{N}\sum_{i,k\in[N]}\xi(i,k)^2\leq V$$ and $$B_1 = \max_{i,k\in[N]}|\xi(i,k)|\leq \max_{i,k\in[N]}|d(i,i,k,k)|\leq B.$$

For the second term on the right hand side of \eqref{eq:han1}, by Theorems \ref{thm:decoupling} and \ref{thm:randomization},
\begin{align*}
&\Er\left[\exp\left(2\lambda\left({\sum_{i\neq j} d(i,j,\pi(i),\pi(j))}-\Er\left[{\sum_{i\neq j} d(i,j,\pi(i),\pi(j))}\right]\right)\right)\right]\\
\leq& \exp\left(200\lambda^2V_2\right)\sqrt{\Er \left[\exp\left(18\lambda\left(\sum_{i\in I,j\in I^c}\tilde{d}_{I,J}(i,j,\pi_1(i),\pi_2(j))\right)\right)\right]}\\
\leq& \exp\left(200\lambda^2V_2\right)\sqrt{\max_{I,J}\Er \left[\exp\left(216\lambda\left(\sum_{i\in I,j\in I^c}\tilde{d}_{I,J}(i,j,\pi_1(i),\pi_2(j)G_iG'_j)\right)\right)\middle|I,J\right]},
\end{align*}
where $G_i$ and $G'_j$ are defined as in Theorem \ref{thm:randomization} and the maximum is taken over all possible realizations of the random sets $I$ and $J$. 

We now bound the MGF using Lemma \ref{lem:2d_MGF}. To do so, we map the notation of Lemma \ref{lem:2d_MGF} to our current setting by restricting the tensor to $I\times I^c\times J\times J^c$. We set the dimensions  in Lemma \ref{lem:2d_MGF} to $|I|$ and $|I^c|$. Consequently, the independent permutations $\pi$ and $\tau$ in Lemma \ref{lem:2d_MGF} correspond to our independent bijections $\pi_1$ and $\pi_2$. Applying Lemma \ref{lem:2d_MGF} with these dimensions yields \[\begin{aligned}
\max_{I,J}\Er \left[\exp\left(216\lambda\left(\sum_{i\in I,j\in I^c}\tilde{d}_{I,J}(i,j,\pi_1(i),\pi_2(j)G_iG'_j)\right)\right)\middle|I,J\right]\leq \exp\left(\frac{V_2(216\lambda)^2}{2(1-\frac{B_2(216\lambda)}{0.64})}\right),
\end{aligned}\]
where \[
V_2 = \max_{I,J}\frac{1}{N_1(N-N_1)}\sum_{\substack{i\in I,k\in J\\ j\in I^c,\ell\in J^c}}\tilde{d}_{I,J}(i,j,k,\ell)^2
\leq \max_{I,J}\frac{1}{N_1(N-N_1)}\sum_{\substack{i\in I,k\in J\\ j\in I^c,\ell\in J^c}}d_{I,J}(i,j,k,\ell)^2\leq 4.1V\] and \[B_2 = \max_{\substack{I,J\\
\text{bijection }\sigma_1 : I \to J\\\
\text{bijection }\sigma_2 : I^{c} \to J^{c}
}}\norm{\left[\tilde{d}_{I,J}(i,j,\sigma_1(i),\sigma_2(j))\right]_{i,j}}_{\sf op}\leq16\max_{\substack{I,J\\
\text{bijection }\sigma_1 : I \to J\\\
\text{bijection }\sigma_2 : I^{c} \to J^{c}
}}\norm{\left[d_{I,J}(i,j,\sigma_1(i),\sigma_2(j))\right]_{i,j}}_{\sf op}\leq 16B,\]
where the last inequality follows from the fact that taking partial averages cannot increase the maximal operator norm.
Therefore,
\begin{align*}
&\quad\Er\left[\exp\left(\lambda\left({\sum_{i,j\in[N]} d(i,j,\pi(i),\pi(j))}-\Er\left[{\sum_{i,j\in[N]} d(i,j,\pi(i),\pi(j))}\right]\right)\right)\right]\\
&\leq \sqrt{\exp\left(\frac{48V_1\lambda^2}{2(1-24B_1^2\lambda^2)}\right)\exp\left(200V\lambda^2\right)\exp\left(\frac{V_2(216\lambda)^2}{2(1-\frac{B_2(216\lambda)}{0.64})}\right)}\\
&\leq \sqrt{\exp\left(\frac{48V\lambda^2}{2(1-24B^2\lambda^2)}\right)\exp\left(200(4.1V)\lambda^2\right)\exp\left(\frac{4.1V(216\lambda)^2}{2(1-\frac{(16B)(216\lambda)}{0.64})}\right)}\\
&= \exp\!\left(\frac12\left[
      \frac{24\,V\lambda^2}{1-24B^2\lambda^2}
    + 820\,V\lambda^2
    + \frac{4.1 \cdot 216^2\, V\lambda^2}{2(1-\frac{16\cdot 216}{0.64}B\lambda)}
\right]\right) \\
&= \exp\!\left(
\frac{V\lambda^2}{2}\left[
\frac{24}{1-24B^2\lambda^2}
+ 820
+ \frac{191290}{1-5400\,B\lambda}
\right]\right)\\
&\leq \exp\left(\frac{200000V\lambda^2}{2(1-5400B\lambda)}\right)
\end{align*}
for all $\lambda\in[0,1/5400B]$, which implies that, for any $t>0$,
\[
\Prb\left(\sum_{i,j\in[N]} d(i,j,\pi(i),\pi(j))-\Er\left[{\sum_{i,j\in[N]} d(i,j,\pi(i),\pi(j))}\right]\geq t\right)\leq\exp\left(-\frac{t^2}{400000V+10800Bt}\right).
\]
This completes the proof.
\end{proof}

\subsection{Proof of Corollary \ref{cor:uncentered2}}
\begin{proof}[Proof of Corollary~\ref{cor:uncentered2}]
Notice that
\begin{align*}
\sum_{i,j}d_w\big(i,j,\pi(i),\pi(j)\big)
&= \sum_{i,j} w\big(i,j,\pi(i),\pi(j)\big)
 - N\sum_i w(i,\cdot,\cdot,\cdot)
 - N\sum_j w(\cdot,j,\cdot,\cdot) \\
&\quad - N\sum_i w(\cdot,\cdot,\pi(i),\cdot)
 - N\sum_j w(\cdot,\cdot,\cdot,\pi(j)) \\
&\quad + N^2 w(\cdot,\cdot,\cdot,\cdot)+N\sum_i w(i,\cdot,\pi(i),\cdot)
 + N^2 w(\cdot,\cdot,\cdot,\cdot)\\
 &\quad+N^2 w(\cdot,\cdot,\cdot,\cdot)+N\sum_j w(\cdot,j,\cdot,\pi(j)) 
 +N^2 w(\cdot,\cdot,\cdot,\cdot)\\
&\quad - N\sum_i w(i,\cdot,\pi(i),\cdot)
 - N\sum_j w(\cdot,j,\cdot,\pi(j)) \\
&\quad - N\sum_i w(i,\cdot,\pi(i),\cdot)
 - N\sum_j w(\cdot,j,\cdot,\pi(j))
 + N^2 w(\cdot,\cdot,\cdot,\cdot)\\
 &=\sum_{i,j} w\big(i,j,\pi(i),\pi(j)\big)- N\sum_i a_w(i,\pi(i))
 - N^2 w(\cdot,\cdot,\cdot,\cdot),
\end{align*}
where $d_w$ is degenerate and\[\begin{aligned}
a_w(i,j) &=w(i,\cdot,j,\cdot)-w(i,\cdot,\cdot,\cdot)-w(\cdot,\cdot,j,\cdot)+w(\cdot,\cdot,\cdot,\cdot)\\
&+w(\cdot,i,\cdot,j)-w(\cdot,i,\cdot,\cdot)-w(\cdot,\cdot,\cdot,j)+w(\cdot,\cdot,\cdot,\cdot).
\end{aligned}\]
Accordingly, for a general non-centered fourth-order tensor $\W(i,j,k,\ell)\in \mathbb{R}^{N\times N\times N\times N}$, we can decompose the combinatorial sum as \[\begin{aligned}
&\quad\sum_{i,j\in[N]}w(i,j,\pi(i),\pi(j))-\Er\left[\sum_{i,j\in[N]}w(i,j,\pi(i),\pi(j))\right] \\
&= N\left(\sum_{i=1}^N a_w(i,\pi(i))-\Er\left[\sum_{i=1}^N a_w(i,\pi(i))\right]\right)\\
&\quad+\sum_{i,j\in[N]}d_w(i,j,\pi(i),\pi(j))-\Er\left[\sum_{i,j\in[N]}d_w(i,j,\pi(i),\pi(j))\right],
\end{aligned}
\]
where $\mathbf D_w$ is doubly centered and \[
\sum_{i,j\in [N]}a_w(i,j)=0.
\] 
Accordingly, by Theorem~\ref{thm:main} and Sourav Chatterjee’s combinatorial Bernstein inequality \citep{chatterjee2007Stein} (see \citet[Theorem~3.7]{han2024introduction} for the specific formulation adopted here),
\begin{align*}
\Prb\left( Q_w-\Er[ Q_w]>t\right) &\leq \Prb\left(N\sum_{i=1}^N a_w(i,\pi(i))-\Er\left[N\sum_{i=1}^N a_w(i,\pi(i))\right]>\frac{t}{2}\right) \\
&\quad+ \Prb\left(\sum_{i,j\in[N]}d_w(i,j,\pi(i),\pi(j))-\Er\left[\sum_{i,j\in[N]}d_w(i,j,\pi(i),\pi(j))\right]>\frac{t}{2}\right)\\
&\leq \exp\left(-\frac{Kt^2}{N\sum_{i,j\in[N]} a_w(i,j)^2+N\max_{i,j\in[N]}|a_w(i,j)|t}\right) \\
&\quad + \exp\left(-\frac{Kt^2}{V_d+B_dt}\right).
\end{align*}
This completes the proof.
\end{proof}

\subsection{Proof of Theorem \ref{thm:HansonWright}}
\begin{proof}[Proof of Theorem \ref{thm:HansonWright}]
(i) Let $w(i,j,k,\ell) = c_{ij}a_{k\ell }$ for any $i,j,k,\ell\in[N]$ and denote
\[
a_{k\cdot} = \frac{1}{N}\sum_{i=1}^N a_{ki},\quad a_{\cdot\ell} = \frac{1}{N}\sum_{i=1}^N a_{i\ell}\quad\text{and}\quad a_{\cdot\cdot} = \frac{1}{N^2}\sum_{i,j\in[N]} a_{ij}.
\]
Since matrix $\C$ is doubly centered, centering tensor $\mathbf{W}$ gives\[
d_w(i,j,k,\ell) = w(i,j,k,\ell) - w(i,j,\cdot,\ell) - w(i,j,k,\cdot)+ w(i,j,\cdot,\cdot) = c_{ij}\tilde{a}_{k\ell },
\]
where $\tilde{a}_{k\ell } = a_{k\ell } -a_{k\cdot}- a_{\cdot \ell}+a_{\cdot\cdot}$ is the $(k,\ell)$-element of the doubly centered version of matrix $\A$. In addition,
\begin{align*}
    \xi_w(i,k) &= d_w(i,i,k,k)-\frac{1}{N}\sum_{k'=1}^N d_w(i,i,k',k')-\frac{1}{N}\sum_{i'=1}^N d_w(i',i',k,k)+\frac{1}{N^2}\sum_{i',k'\in [N]}d_w(i',i',k',k')\\
    &=\Big(c_{ii}- \frac{1}{N}\sum_{i' = 1}^Nc_{i'i'}\Big)\Big(\tilde a_{kk} - \frac{1}{N}\sum_{k' = 1}^N\tilde a_{k'k'}\Big).
\end{align*}
Using Corollary \ref{cor:uncentered2} on the tensor $w(i,j,k,\ell)$, we obtain
\[
\Prb( Q_{\sf QF}-\Er[ Q_{\sf QF}]\geq t)\leq \exp\left(-\frac{Kt^2}{V_d+B_dt}\right)
.\]
Since 
\[|\tilde{a}_{k\ell}|\leq |a_{k\ell}|+|a_{k\cdot}|+|a_{\cdot \ell}|+|a_{\cdot\cdot}|\leq 4\]
and centering does not increase Frobenius norm in the sense that
\[\norm{\tilde A}_{\sf F}\le \norm{A}_{\sf F}\leq N^2,\]
we have
\begin{align*}
   V_d &= \frac{1}{N}\sum_{i,k\in[N]}\xi_w(i,k)^2+\frac{1}{N^2}\sum_{i,j,k,\ell\in[N]}d_w(i,j,k,\ell)^2\\
   &=\frac{1}{N}\sum_{i,k\in [N]}\Big(c_{ii}- \frac{1}{N}\sum_{i' = 1}^Nc_{i'i'}\Big)^2\Big(\tilde a_{kk} - \frac{1}{N}\sum_{k' = 1}^N\tilde a_{k'k'}\Big)^2+\frac{1}{N^2}\sum_{i,j,k,\ell\in[N]}c_{ij}^2\tilde{a}_{k\ell }^2\\
   &=\frac{1}{N}\sum_{i\in [N]}\Big(c_{ii}- \frac{1}{N}\sum_{i' = 1}^Nc_{i'i'}\Big)^2\sum_{k\in [N]}\Big(\tilde a_{kk} - \frac{1}{N}\sum_{k' = 1}^N\tilde a_{k'k'}\Big)^2+\frac{1}{N^2}\sum_{i,j\in[N]}c_{ij}^2\sum_{k,\ell\in[N]}\tilde{a}_{k\ell }^2\\
   &\leq\frac{1}{N}\sum_{i\in [N]}c_{ii}^2\sum_{k\in [N]}\tilde a_{kk}^2+\frac{1}{N^2}\sum_{i,j\in[N]}c_{ij}^2\sum_{k,\ell\in[N]}\tilde{a}_{k\ell }^2\\
   &\leq\big(\max_k \tilde{a}_{kk}^2+\max_{k,\ell} \tilde{a}_{k\ell}^2\big)\sum_{i,j\in[N]}c_{ij}^2\\
   &= 32\norm{\C}_{\sf F}^2
\end{align*}
and
\[\begin{aligned}
B_d = \max_{\sigma\in\mathcal{S}_N}\norm{\Big[{c}_{ij}\tilde{a}_{\sigma(i)\sigma(j)}\Big]_{i,j}}_{\sf op}\leq \max_{\sigma\in\mathcal{S}_N}\norm{\Big[c_{ij}a_{\sigma(i)\sigma(j)}\Big]_{i,j}}_{\sf op} +\max_{\sigma\in\mathcal{S}_N}\norm{\Big[c_{ij}a_{\sigma(i)\cdot}\Big]_{i,j}}_{\sf op}\\
\quad+ \max_{\sigma\in\mathcal{S}_N}\norm{\Big[c_{ij}a_{\cdot\sigma(j)}\Big]_{i,j}}_{\sf op}+\max_{\sigma\in\mathcal{S}_N}\norm{\Big[c_{ij}a_{\cdot\cdot}\Big]_{i,j}}_{\sf op}.
\end{aligned}\]
Since
\[
\max_{\sigma\in\mathcal{S}_N}
\left\|
\Big[c_{ij} a_{\sigma(i)\cdot}\Big]_{i,j}
\right\|_{\sf op}
\le
\left(\max_i |a_{i\cdot}|\right) \|\C\|_{\sf op}
\leq \norm{\C}_{\sf op},
\]
and similarly 
\[
\max_{\sigma\in\mathcal{S}_N}
\left\|
\Big[c_{ij} a_{\cdot\sigma(j)}\Big]_{i,j}
\right\|_{\sf op}
\le
\left(\max_j |a_{\cdot j}|\right) \|\C\|_{\sf op}
\leq \norm{\C}_{\sf op}
\]
and
\[
\max_{\sigma\in\mathcal{S}_N}
\left\|
\big[c_{ij} a_{\cdot \cdot}\big]_{i,j}
\right\|_{\sf op}
=|a_{\cdot \cdot}|\norm{\C}_{\sf op}
\le \norm{\C}_{\sf op},
\]
we therefore have
\[
B_d\leq 3\norm{\C}_{\sf op} + \max_{\sigma\in\mathcal{S}_N}\norm{\mathbf C\circ \A^{\sigma}}_{\sf op}.
\]
Hence,
\[
\Prb( Q_{\sf QF}-\Er[ Q_{\sf QF}]\geq t)\leq \exp\left(-\frac{Kt^2}{32\norm{\C}_{\sf F}^2+(3\norm{\C}_{\sf op} + \max_{\sigma\in\mathcal{S}_N}\norm{\mathbf C\circ \A^{\sigma}}_{\sf op})t}\right)
,\]
which completes the proof.

(ii) Since matrix $\A\succeq 0$ is positive semidefinite, there exists a Hilbert space $(\mathcal H,\langle\cdot,\cdot\rangle_{\mathcal H})$
with vectors $\bm v_1,\dots,\bm v_N\in\mathcal H$ such that
\[
a_{ij}=\langle \bm v_i,\bm v_j\rangle_{\mathcal H}\qquad (1\le i,j\le N).
\]
We write $\|\bm v\|_{\mathcal H}:=\sqrt{\langle \bm v,\bm v\rangle_{\mathcal H}}$ for the norm induced by the inner product.
In particular,
\[
\|\bm v_i\|_{\mathcal H}^2=\langle \bm v_i,\bm v_i\rangle_{\mathcal H}=a_{ii}\le 1 \qquad (1\le i\le N).
\]

Define the linear map $T:\mathbb R^N\to \mathbb R^N\otimes \mathcal H$ by
\[
T\bm e_i=\bm e_i\otimes \bm v_i,\qquad i=1,\dots,N,
\]
where $\{\bm e_i\}$ is the standard basis in $\mathbb R^N$ and $\otimes$ is the Hilbert-space tensor product. We equip the Hilbert tensor product $\mathbb R^N \otimes \mathcal H$ with the canonical inner product defined on simple tensors by
\[
\langle \bm x_1 \otimes \bm h_1,\ \bm x_2 \otimes \bm h_2\rangle_{\mathbb R^N \otimes \mathcal H}
:= \langle \bm x_1,\bm x_2\rangle_{\mathbb R^N}\,
   \langle \bm h_1,\bm h_2\rangle_{\mathcal H}.
\] Let $T^*:\mathbb R^N\otimes\mathcal H\to \mathbb R^N$ denote the adjoint of $T$, defined by
\[
\langle T \bm x,\ \bm y\rangle_{\mathbb R^N\otimes\mathcal H}=\langle \bm x,\ T^* \bm y\rangle_{\mathbb R^N}
\qquad\forall\,\bm x\in\mathbb R^N,\ \bm y\in\mathbb R^N\otimes\mathcal H.
\]
Let $\mathbf I_{\mathcal H}$ be the identity operator on $\mathcal H$ and
$\C\otimes \mathbf I_{\mathcal H}:\mathbb{R}^N\otimes\mathcal H\to\mathbb{R}^N\otimes\mathcal H$
be the bounded linear map defined by 
$$(\C\otimes \mathbf I_{\mathcal H})(\bm x\otimes \bm h)=(\C\bm x)\otimes \bm h,\qquad \forall\,\bm x\in\mathbb{R}^N,\ \forall\,\bm h\in\mathcal H.$$
Then the composition
$T^*(\C\otimes \mathbf I_{\mathcal H})T:\mathbb R^N\to\mathbb R^N$ is a linear operator and thus can be identified with an $N\times N$ matrix. Under basis $\{\bm e_i\}$, for $1\le i,j\le N$, its $(i,j)$-entry is given by
\[
\bigl(T^*(\C\otimes \mathbf{I}_{\mathcal{H}})T\bigr)_{ij}
= \langle T \bm e_i, (\C\otimes \mathbf{I}_{\mathcal{H}}) T \bm e_j\rangle
= \langle \bm e_i \otimes \bm v_i,\,(\C \bm e_j)\otimes \bm v_j\rangle
= c_{ij}\,\langle \bm v_i,\bm v_j\rangle
= (\C\circ \A)_{ij}.
\]
Define the operator norm of $T$ by
\[
\|T\|_{\mathbb R^N\to \mathbb R^N\otimes \mathcal H}:=\sup_{\|\bm x\|_{\mathbb{R}^N}=1}\|T\bm x\|_{\mathbb{R}^N\otimes\mathcal{H}},
\]
where $\|\cdot\|_{\mathbb{R}^N}$ is the Euclidean norm on $\mathbb R^N$ and
$\|\cdot\|_{\mathbb{R}^N\otimes\mathcal{H}}$ is the norm on
$\mathbb R^N\otimes\mathcal H$ induced by the tensor-product inner product. We may similarly define the operator norms of $T^*$, $\C \otimes \mathbf I_{\mathcal H}$ and $T^*(\C\otimes \mathbf{I}_{\mathcal{H}})T$, as
\[
\|T^*\|_{\mathbb R^N \otimes \mathcal H \to \mathbb R^N},
\quad \|\C \otimes \mathbf I_{\mathcal H}\|_{\mathbb R^N \otimes \mathcal H \to \mathbb R^N \otimes \mathcal H} \quad\text{and}\quad \|T^*(\C\otimes \mathbf{I}_{\mathcal{H}})T\|_{\mathbb R^N \to \mathbb R^N},
\]
respectively. Moreover, since $T^*$ is the adjoint of $T$, we have
\[
\|T^*\|_{\mathbb R^N \otimes \mathcal H \to \mathbb R^N}
=
\|T\|_{\mathbb R^N \to \mathbb R^N \otimes \mathcal H}.
\]
In addition, because $\mathbf I_{\mathcal H}$ is the identity operator, the tensor product operator
$\C \otimes \mathbf I_{\mathcal H}$ satisfies
\[
\|\C \otimes \mathbf I_{\mathcal H}\|_{\mathbb R^N \otimes \mathcal H \to \mathbb R^N \otimes \mathcal H}
=
\|\C\|_{\sf op}.
\] By the above identification of linear operator $T^*(\C\otimes \mathbf{I}_{\mathcal{H}})T$, we have
\[\begin{aligned}
\|\C\circ \A\|_{\sf op} &= \|T^*(\C\otimes \mathbf{I}_{\mathcal{H}})T\|_{\mathbb R^N \to \mathbb R^N}\\
&\leq \|T^*\|_{\mathbb R^N\otimes \mathcal H\to \mathbb R^N} \,\|\C\otimes \mathbf{I}_{\mathcal{H}}\|_{\mathbb R^N \otimes \mathcal H \to \mathbb R^N \otimes \mathcal H}\|T\|_{\mathbb R^N\to \mathbb R^N\otimes \mathcal H}\\
&= \|T\|_{\mathbb R^N\to \mathbb R^N\otimes \mathcal H}^2\,\|\C\|_{\sf op}.
\end{aligned}\]
Since $\|T\|_{\mathbb R^N\to \mathbb R^N\otimes \mathcal H} = \sup_i \|\bm v_i\|_{\mathcal{H}} = \sqrt{\max_i a_{ii}} \le 1$, we have
\[
\|\C\circ \A\|_{\sf{op}} \leq \|\C\|_{\sf{op}}.
\]

If $\A^\sigma = \mathbf{P}_\sigma \A \mathbf{P}_\sigma^\top$ for a permutation matrix $\mathbf{P}_\sigma$, then $\A^\sigma$ is also a positive semidefinite matrix. Repeating the argument yields
\[
\|\C\circ \A^{\sigma}\|_{\sf{op}} \le\|\C\|_{\sf{op}}.
\]
The proof follows from (i).
\end{proof}

\subsection{Proof of Theorem \ref{thm:Bennett}}
\begin{proof}[Proof of Theorem~\ref{thm:Bennett}]
Let $d(i,j,k,\ell)=a_{ij}c_{k\ell }$. By Lemma \ref{lem:decoupling_zero_diagonal}, we have for any convex function $\varphi$,
\begin{align*}
   &\quad\Er\left[\varphi\left(\sum_{i\ne j} d(i,j,\pi(i),\pi(j))\right)\right]\\
   &\leq\frac{1}{2}\,\Er \left[\varphi\left(2\alpha\sum_{i\in I,j\in I^c}\tilde{d}_{I,J}(i,j,\pi_1(i),\pi_2(j))\right)\right]+ \frac{1}{2}\,\Er \left[\varphi\left(2\beta\sum_{i\ne j} d(i,j,\pi(j),\pi(i))\right)\right].
\end{align*}
Since $\alpha\leq 5$ and $\beta\leq 2/N^2$, taking $\varphi(x) = e^{\lambda x/\nu}$ gives
\[
\begin{aligned}
   &\quad\Er\left[\exp\left(\frac{\lambda}{\nu}\sum_{i\ne j} d(i,j,\pi(i),\pi(j))\right)\right]\\
   &\leq\frac{1}{2}\,\Er \left[\exp\left(\frac{2\alpha\lambda}{\nu}\sum_{i\in I,j\in I^c}\tilde{d}_{I,J}(i,j,\pi_1(i),\pi_2(j))\right)\right]+ \frac{1}{2}\,\Er \left[\exp\left(\frac{2\beta\lambda}{\nu}\sum_{i\ne j} d(i,j,\pi(j),\pi(i))\right)\right]\\
   &\leq \frac{1}{2}\,\Er \left[\exp\left(\frac{10\lambda}{\nu}\sum_{i\in I,j\in I^c}\tilde{d}_{I,J}(i,j,\pi_1(i),\pi_2(j))\right)\right]+ \frac{1}{2}\,\Er \left[\exp\left(\frac{4\lambda}{N^2\nu}\sum_{i\ne j} d(i,j,\pi(j),\pi(i))\right)\right].\\
\end{aligned}
\]
We now bound the first term using Lemma \ref{lem:Bennet_2d_decoupled}. To do so, we map the notation of Lemma \ref{lem:Bennet_2d_decoupled} to our current setting by restricting the tensor to $I\times I^c\times J\times J^c$. We set the dimensions  in Lemma \ref{lem:Bennet_2d_decoupled} to $|I|$ and $|I^c|$. Consequently, the independent permutations $\pi$ and $\tau$ in Lemma \ref{lem:Bennet_2d_decoupled} correspond to our independent bijections $\pi_1$ and $\pi_2$. Since restriction and doubly centering will not increase $\norm{\C}_{\sf op}$ and $\norm{\A}_{\sf F}$, and moreover
\[
\max_{\substack{
i \in I,\,k\in J\\
\text{bijection }\sigma_1 : I^{c} \to I^c\\\
\text{bijection }\sigma_2 : I^{c} \to J^{c}
}}\left|\sum_{j\in I^c}\tilde{d}_{I,J}(i,\sigma_1(j),k,\sigma_2(j))\right|\leq\max_{\substack{
i \in I,\, k \in J\\
\text{bijection }\sigma_1 : I^{c} \to I^c\\\
\text{bijection }\sigma_2 : I^{c} \to J^{c}
}}\left|\sum_{j\in I^c}d(i,\sigma_1(j),k,\sigma_2(j))\right|\leq  \nu,
\]we may apply Lemma \ref{lem:Bennet_2d_decoupled} with these dimensions and the above bounds to obtain
$$\Er \left[\exp\left(\frac{10\lambda}{\nu}\sum_{i\in I,j\in I^c}\tilde{d}_{I,J}(i,j,\pi_1(i),\pi_2(j))\right)\right]\leq \exp\left(\frac{5400}{N\nu^2}\norm{\C}_{\sf op}^2\norm{\A}^2_{\sf F}\lambda^2e^{40\lambda}\right).$$
Since
$$
\left|\sum_{i\ne j} d(i,j,\pi(j),\pi(i))\right| = \left|\sum_{i\ne j} c_{ij}a_{\pi(j)\pi(i)}\right| \leq \norm{\C}_{\sf F}\norm{\A}_{\sf F} \leq \sqrt{N}\norm{\C}_{\sf op}\norm{\A}_{\sf F},
$$
and
$$
\Er\left[\frac{4\lambda}{N^2\nu}\sum_{i\ne j} d(i,j,\pi(j),\pi(i))\right] = 0,
$$
by the inequality $e^x\leq x+e^{x^2}$, we have
\[
\Er \left[\exp\left(\frac{4\lambda}{N^2\nu}\sum_{i\ne j} d(i,j,\pi(j),\pi(i))\right)\right]\leq \exp\left(\frac{16}{N^3\nu^2}\norm{\C}_{\sf op}^2\norm{\A}^2_{\sf F}\lambda^2\right). 
\]
Combining the two bounds, we obtain
\[\Er\left[\exp\left(\frac{\lambda}{\nu}\sum_{i\ne j} d(i,j,\pi(i),\pi(j))\right)\right]\leq \exp\left(\frac{5400}{N\nu^2}\norm{\C}_{\sf op}^2\norm{\A}^2_{\sf F}\lambda^2e^{40\lambda}\right).\]
Equivalently,
\[\Er\left[\exp\left(\frac{\lambda}{10\nu}\sum_{i\ne j} d(i,j,\pi(i),\pi(j))\right)\right]\leq \exp\left(\frac{54}{N\nu^2}\norm{\C}_{\sf op}^2\norm{\A}^2_{\sf F}\lambda^2e^{4\lambda}\right).\]
Since $\Er[ Q_{\sf QF}] = 0$, by Lemma \ref{lem:Bennet_MGF}
\[
\Prb\left(\frac{ Q_{\sf QF}-\Er[ Q_{\sf QF}]}{10\nu} \geq t\right)
\leq \exp{\left(-\frac{t}{12}\cdot\log{\left(1+\frac{t}{\frac{54}{N\nu^2}\norm{\C}_{\sf op}^2\norm{\A}_{\sf F}^2}\right)} \right)},
\]
which implies
\[
\Prb\Big( Q_{\sf QF}-\Er[ Q_{\sf QF}] \geq t\Big) 
\leq \exp{\left(-\frac{t}{120\nu}\cdot\log{\left(1+\frac{\nu t}{\frac{540}{N}\norm{\C}_{\sf op}^2\norm{\A}_{\sf F}^2}\right)} \right)}
\]
and thus completes the proof.
\end{proof}

\section{Proofs of auxiliary lemmas}

\subsection{Proof of Lemma \ref{lem:1d}}
\begin{proof}[Proof of Lemma \ref{lem:1d}]
By the law of total probability, we may assume that $\A$ is a fixed and doubly centered matrix and write $\A=[a(i,j)]$. Since $G_i$ is symmetric, we may assume that $\lambda\geq 0$. We construct an exchangeable pair. For any $i,j\in[N]$, let $\pi_{ij}=\pi\circ(i\,j)$ where $(i\,j)$ denotes the transposition when $i\neq j$ and identity otherwise, i.e. for any $k\in [N]$,
\begin{equation}\label{defpiprime}
    \pi_{ij}(k)=
\begin{cases}
\pi(k), & k\neq i,j,\\
\pi(j), & k=i,\\
\pi(i), & k=j.
\end{cases}
\end{equation}
Let $\pi'=\pi_{ij}$, where $i$ and $j$ are sampled independently and uniformly from $[N]$.
Then \((\pi,\pi')\) is exchangeable. Define
\[
F(\pi,\pi')=\frac{N}{2}\!\left(\sum_{i=1}^N a(i,\pi(i))-\sum_{i=1}^N a(i,\pi'(i))\right).
\]
Then \(F(\pi,\pi')=-F(\pi',\pi)\), \(\Er[f(\pi)]=\sum_{i,j}a(i,j)/N=0\) and 
\begin{align*}
    \Er[F(\pi,\pi')\mid\pi]& =\frac{N}{2}\,\Er[a(i,\pi(i))+a(j,\pi(j))-a(i,\pi(j))-a(j,\pi(i))\mid\pi]\\
    &=\sum_{i=1}^N a(i,\pi(i)) - \frac{1}{N}\sum_{i,j\in[N]} a(i,\pi(j))\\
    &=\sum_{i=1}^N a(i,\pi(i))\\
    &=f(\pi),
\end{align*}
where $f$ is introduced in \eqref{eq:1dstatistics}.
Moreover, let
\[
\Delta_{i,j} :=a\bigl(i,\pi(i)\bigr)+a\bigl(j,\pi(j)\bigr)-a\bigl(i,\pi(j)\bigr)-a\bigl(j,\pi(i)\bigr)
\text{ where }i,j\in[N].
\]
Then we have
\[
f(\pi)-f(\pi')=\Delta_{i,j}
\quad \text{and}\quad 
F(\pi,\pi')=\frac{N}{2}\,\Delta_{i,j}.
\]
Hence
\begin{align*}
v(\pi)&=\frac12\,\Er\!\left[\bigl|(f(\pi)-f(\pi'))F(\pi,\pi')\bigr|\,\middle|\,\pi\right]\\
      &=\frac{N}{4}\,\Er\left[\Delta_{i,j}^{2}\middle|\,\pi\right]\\
      &=\frac{1}{4N}\sum_{i,j\in[N]}\bigl(a(i,\pi(i))+a(j,\pi(j))-a(i,\pi(j))-a(j,\pi(i))\bigr)^2.\\
      &\leq \frac{1}{N}\sum_{i,j\in[N]}\left[a(i,\pi(i))^2+a(j,\pi(j))^2+a(i,\pi(j))^2+a(j,\pi(i))^2\right]\\
      &\leq 2\sum_{i=1}^N a(i,\pi(i))^2+ \frac{2}{N}\sum_{i,j\in[N]} a(i,j)^2.
\end{align*}
By Chatterjee's third lemma \citet[Lemma 3.11]{han2024introduction}, taking $\psi = \frac{3}{2}\lambda^2$, we have\[
\log \Er[\exp(\lambda f(\pi))]\leq \frac{\lambda^2r\left(\frac{3}{2}\lambda^2\right)}{2\left(1-\frac{\lambda^2}{\frac{3}{2}\lambda^2}\right)}
= \frac{3}{2}\lambda^2r\left(\frac{3}{2}\lambda^2\right) = \log\Er\left[\exp\left(\frac{3}{2}\lambda^2 v(\pi)\right)\right],
\]
where \(r(\cdot)\) is defined as in \citet[Lemma 3.11]{han2024introduction}; for completeness, we recall that for any \(\psi > 0\),
\[
r(\psi)
=
\frac{1}{\psi}\,\log \Er e^{\psi v(\pi)}.
\]
Since $x\to e^{\frac{3}{2}\lambda^2x}$ is convex, by Jensen's inequality we have
\[\begin{aligned}
  \Er\left[\exp\left(\frac{3}{2}\lambda^2v(\pi)\right)\right]&\leq  \Er\left[\exp\left(\frac{3}{2}\lambda^2\left(2\sum_{i=1}^N a(i,\pi(i))^2+ \frac{2}{N}\sum_{i,j\in[N]} a(i,j)^2\right)\right)\right]\\
  &=  \Er\left[\exp\left(3\lambda^2\left(\sum_{i=1}^N a(i,\pi(i))^2+ \frac{1}{N}\sum_{i,j\in[N]} a(i,j)^2\right)\right)\right]\\
  &=  \Er\left[\exp\left(3\lambda^2\left(\sum_{i=1}^N a(i,\pi(i))^2+ \Er\left[\sum_{i=1}^N a(i,\pi(i))^2\right]\right)\right)\right]\\
  &\leq  \Er\left[\exp\left(6\lambda^2\sum_{i=1}^N a(i,\pi(i))^2\right)\right].\\
\end{aligned}
\]
Since $\bm{G}=(G_1,\ldots,G_N)\sim \mathcal{N}(0,\mathbf{I}_N)$, we have
\[
\sum_{i=1}^N a(i,\pi(i))G_i \sim \mathcal{N}\left(0,\sum_{i=1}^Na(i,\pi(i))^2\right).
\]
Therefore, by the MGF of normal random variable, we obtain
\begin{align*}
\Er\left[\exp\left(\lambda\sum_{i=1}^N a(i,\pi(i))\right)\right]&=\Er[\exp(\lambda f(\pi))]\\&\leq\Er\left[\exp\left(\frac{3}{2}\lambda^2v(\pi)\right)\right]\\
&\leq \Er\left[\exp\left(6\lambda^2\sum_{i=1}^N a(i,\pi(i))^2\right)\right] \\
&= \Er\left[\Er\left[\left.\exp\left(2\sqrt{3}\lambda\sum_{i=1}^N a(i,\pi(i))G_i\right)\right|\pi\right]\right]\\
&= \Er\left[\exp\left(2\sqrt{3}\lambda\sum_{i=1}^N a(i,\pi(i))G_i\right)\right]
\end{align*}
and thus complete the proof.
\end{proof}
\subsection{Proof of Lemma \ref{lem:1d_MGF}}
\begin{proof}[Proof of Lemma \ref{lem:1d_MGF}]
    By the property of normal distribution,
    \[
    \Er\left[\exp\left(\lambda\sum_{i=1}^N a(i,\pi(i))G_i\right)\right] = \exp\left(\frac{\lambda^2}{2}\sum_{i=1}^N a(i,\pi(i))^2\right).
    \]
We construct the same exchangeable pair $(\pi,\pi')$ as in Lemma~\ref{lem:1d}. Define
\[
f_2(\pi)=\sum_{i=1}^N a(i,\pi(i))^2-\frac{1}{N}\sum_{i,j\in[N]}a(i,j)^2, \qquad
F_2(\pi,\pi')=\frac{N}{2}\!\left(\sum_{i=1}^N a(i,\pi(i))-\sum_{i=1}^N a(i,\pi'(i))^2\right).
\]
Then, \(F_2(\pi,\pi')=-F_2(\pi',\pi)\), \(\Er[f_2(\pi)]=0\), and $\Er[F_2(\pi,\pi')\mid\pi]=f_2(\pi)$. Similarly, we have
\[\begin{aligned}
v_2(\pi)&=\frac12\,\Er\!\left[\bigl|(f_2(\pi)-f_2(\pi'))F_2(\pi,\pi')\bigr|\,\middle|\,\pi\right]\\
      &=\frac{1}{4N}\sum_{i,j\in[N]}\bigl(a(i,\pi(i))^2+a(j,\pi(j))^2-a(i,\pi(j))^2-a(j,\pi(i))^2\bigr)^2.\\
      &\leq \frac{1}{2N}\max_{i,j\in[N]}a(i,j)^2\sum_{i,j\in[N]}\left[a(i,\pi(i))^2+a(j,\pi(j))^2+a(i,\pi(j))^2+a(j,\pi(i))^2\right]\\
      &\leq \max_{i,j\in[N]}a(i,j)^2\sum_{i=1}^N a(i,\pi(i))^2+ \frac{1}{N}\max_{i,j\in[N]}a(i,j)^2\sum_{i,j\in[N]}a(i,j)^2\\
      &=\max_{i,j\in[N]}a(i,j)^2f_2(\pi)+ \frac{1}{N}\max_{i,j\in[N]}a(i,j)^2\sum_{i,j\in[N]}a(i,j)^2.
\end{aligned}\]
By Chatterjee's second lemma \citep[Lemma 3.10]{han2024introduction}, we have
\[
\Er\left[\exp\left(\lambda' f_2(\pi)\right)\right]\leq \exp\left(\frac{V_AB_A^2\lambda'^2}{2(1-B_A^2\lambda')}\right),
\]
for all $\lambda'\in [0,1/B_A^2)$. Hence 
\begin{equation}\label{MGFd_nonnegative}
\Er\left[\exp\left(\lambda'\sum_{i=1}^N a(i,\pi(i))^2\right)\right]\leq \exp\left(\frac{V_AB_A^2\lambda'^2}{2(1-B_A^2\lambda')}+ V_A\lambda'\right)=\exp\left(\frac{2V_A\lambda'-V_AB_A^2\lambda'^2}{2(1-B_A^2\lambda')}\right).
\end{equation}
Therefore, for any $|\lambda|\leq \sqrt{2}/B_A$,
\[
\Er\left[\exp\left(\lambda\sum_{i=1}^N a(i,\pi(i))G_i\right)\right] = \exp\left(\frac{\lambda^2}{2}\sum_{i=1}^N a(i,\pi(i))^2\right)\leq \exp\left(\frac{V_A\lambda^2-V_AB_A^2\lambda^4/4}{2(1-B_A^2\lambda^2/2)}\right).
\]
The proof is complete.
\end{proof}

\subsection{Proof of Lemma \ref{lem:2d_MGF}}
\begin{proof}[Proof of Lemma \ref{lem:2d_MGF}]
Applying the logarithmic MGF bound in \citep[Example 2.12]{BoucheronLugosiMassart2013} to the Hermitian dilation of matrix $[d(i,j,\pi(i),\tau(j))]_{i,j}$, one obtains the following MGF bound for decoupled Gaussian bilinear form,
\[
\Er\left[\exp\left(\lambda{\sum_{\substack{i\in[N]\\ j\in[M]}} d(i,j,\pi(i),\tau(j))G_iG'_j}\right)\right]\leq \exp\left(\frac{\norm{[d(i,j,\pi(i),\tau(j))]_{i,j}}_{\sf F}^2\lambda^2}{2(1-\norm{[d(i,j,\pi(i),\tau(j))]_{i,j}}_{\sf op}\lambda)}\right).
\]
Hence,
\[
\begin{aligned}
&\quad\Er\left[\exp\left(\lambda{\sum_{\substack{i\in[N]\\ j\in[M]}} d(i,j,\pi(i),\tau(j))G_iG'_j}\right)\right]\\
&\leq\Er\left.\left[\Er\left[\exp\left(\lambda{\sum_{\substack{i\in[N]\\ j\in[M]}} d(i,j,\pi(i),\tau(j))G_iG'_j}\right)\right|\pi,\tau\right]\right]\\
&\leq\Er\left[\exp\left(\frac{\norm{[d(i,j,\pi(i),\tau(j))]_{i,j}}_{\sf F}^2\lambda^2}{2(1-\norm{[d(i,j,\pi(i),\tau(j))]_{i,j}}_{\sf op}\lambda)}\right)\right]\\
&\leq\Er\left[\exp\left(\frac{\norm{[d(i,j,\pi(i),\tau(j))]_{i,j}}_{\sf F}^2\lambda^2}{2(1-B_{\sf{rect}}\lambda)}\right)\right].
\end{aligned}
\]
It remains to bound the MGF of 
\[
\sum_{\substack{i\in[N]\\ j\in[M]}}d(i,j,\pi(i),\tau(j))^2. 
\]
Fixing $\tau$, let $\mathbf B_\tau=[b_{\tau}(i,k)]\in \mathbb{R}^{N\times N}$ be a tensor given by
\[
    b_\tau(i,k) = \sqrt{\sum_{j=1}^M d(i,j,k,\tau(j))^2}.
    \] By inequality \eqref{MGFd_nonnegative}, we have
\[
\Er\left[\exp\left(\lambda'\sum_{i=1}^N b_\tau(i,\pi(i))^2\right)\right]\leq \exp\left(\frac{2V_\tau\lambda'-V_\tau B_\tau^2\lambda'^2}{2(1-B_\tau^2\lambda')}\right),
\]
where \[
B_\tau = \max_{i,k\in[N]}|b_{\tau}(i,k)|=\sqrt{\max_{i,k\in[N]}\sum_{j=1}^M d(i,j,k,\tau(j))^2}~~{\rm and}~~ 
 V_\tau = \frac{1}{N}\sum_{i,k\in[N]}b_\tau(i,k)^2=\frac{1}{N}\sum_{\substack{i,k\in[N]\\ j\in[M]}}d(i,j,k,\tau(j))^2.
 \]
Since
\[\begin{aligned}
B_\tau&=\sqrt{\max_{i,k\in[N]}\sum_{j=1}^{M}d(i,j,k,\tau(j))^2}=
\sqrt{\max_{\substack{i\in[N]\\ \sigma\in\mathcal{S}_N}}\sum_{j=1}^{M}d(i,j,\sigma(i),\tau(j))^2}\leq \max_{\substack{\sigma\in\mathcal{S}_N\\ \tilde{\sigma}\in\mathcal{S}_M}}\norm{\left[d(i,j,\sigma(i),\tilde{\sigma}(j))\right]_{i,j}}_{\sf op}=B_{\sf{rect}},
  \end{aligned}
\]
we have, for any $|\lambda'|<1/B_{\sf{rect}}^2$,
\[
\Er\left[\exp\left(\lambda'\sum_{\substack{i\in[N]\\ j\in[M]}}d(i,j,\pi(i),\tau(j))^2\right)\right]=
\Er\left[\exp\left(\lambda'\sum_{i=1}^N b_\tau(i,\pi(i))^2\right)\right]\leq \exp\left(\frac{V_\tau\left(2\lambda'- B_{\sf{rect}}^2\lambda'^2\right)}{2\left(1-B_{\sf{rect}}^2\lambda'\right)}\right).
\]
It remains to bound the MGF for $V_\tau$. Let $\mathbf B_\Sigma = [b_{\Sigma}(j,l)]\in \mathbb{R}^{M\times M}$ be a fixed matrix given by\[
    b_\Sigma(j,\ell) = \sqrt{\frac{1}{N}\sum_{i,k\in[N]}d (i,j,k,\ell)^2}.
    \]
Then by inequality \eqref{MGFd_nonnegative}, we have
    \[
\exp\left(\lambda''\sum_{j=1}^M b_\Sigma(j,\tau(j))^2\right)\leq\exp\left(\frac{2V_\Sigma \lambda''-V_\Sigma B_\Sigma^2\lambda''^2}{2\left(1-B_\Sigma^2\lambda''\right)}\right),
\]
where
\[B_\Sigma = \max_{j,\ell\in[M]}|b_\Sigma(j,\ell)| = \sqrt{\max_{j,\ell\in[M]}\frac{1}{N}\sum_{i,k\in[N]}b_\Sigma (i,j,k,\ell)^2}
    \] and 
\[V_\Sigma = \frac{1}{M}\sum_{j,l\in[M]}b_\Sigma(i,j)^2 = \frac{1}{NM}\sum_{\substack{i,k\in[N]\\ j,\ell\in[M]}}d(i,j,k,\ell)^2=\tilde V_{\sf{rect}}.\]
Since
\begin{align*}
  B_\Sigma&=\sqrt{\max_{j,\ell\in[M]}\frac{1}{N}\sum_{i,k\in[N]}d(i,j,k,\ell)^2} = \sqrt{\Er\left[\max_{j,\ell\in[M]}\sum_{i=1}^Nd(i,j,\pi(i),\ell)^2\right]}=\sqrt{\max_{\substack{j,\ell\in[M]\\ \sigma\in\mathcal{S}_N}}\sum_{i=1}^Nd(i,j,\sigma(i),\ell)^2}\\
  &=\sqrt{\max_{\substack{j\in [M]\\ \sigma\in\mathcal{S}_N\\ \tilde{\sigma}\in\mathcal{S}_M}}\sum_{i=1}^Nd(i,j,\sigma(i),\tilde{\sigma}(j))^2}\leq \max_{\substack{\sigma\in\mathcal{S}_N\\ \tilde{\sigma}\in\mathcal{S}_M}}\norm{\left[d(i,j,\sigma(i),\tilde{\sigma}(j))\right]_{i,j}}_{\sf op}=B_{\sf{rect}},
\end{align*}
we have
\[
\Er\left[\exp\left(\lambda''\sum_{j=1}^M b_\Sigma(j,\tau(j))^2\right)\right]\leq\exp\left(\frac{2\tilde V_{\sf{rect}} \lambda''-\tilde V_{\sf{rect}} B_{\sf{rect}}^2\lambda''^2}{2\left(1-B_{\sf{rect}}^2\lambda''\right)}\right).
\]
Letting  $\lambda'\in[0,(2-\sqrt{2})/B_{\sf{rect}}^2)$, then taking $\lambda'' = \frac{2\lambda'- B_{\sf{rect}}^2\lambda'^2}{2\left(1-B_{\sf{rect}}^2\lambda'\right)}\in [0,1/B_{\sf{rect}}^2)$, we have 
\[
\Er\left[\exp\left(\lambda'\sum_{\substack{i\in[N]\\ j\in[M]}}d(i,j,\pi(i),\tau(j))^2\right)\right]\leq \exp\left(
\frac{
\tilde V_{\sf{rect}}\lambda'(B_{\sf{rect}}^2\lambda' - 2)(B_{\sf{rect}}^{4}\lambda'^2 - 6B_{\sf{rect}}^2\lambda' + 4)
}{
4(B_{\sf{rect}}^2\lambda' - 1)(B_{\sf{rect}}^4\lambda'^2 - 4B_{\sf{rect}}^2\lambda' + 2)
}
\right).
\]
Since
\[
\exp\left(
\frac{
\tilde V_{\sf{rect}}\lambda'(B_{\sf{rect}}^2\lambda' - 2)(B_{\sf{rect}}^{4}\lambda'^2 - 6B_{\sf{rect}}^2\lambda' + 4)
}{
4(B_{\sf{rect}}^2\lambda' - 1)(B_{\sf{rect}}^4\lambda'^2 - 4B_{\sf{rect}}^2\lambda' + 2)
}\right)\leq \exp\left(\frac{\tilde V_{\sf{rect}}\lambda'}{1-\frac{B_{\sf{rect}}^2\lambda'}{2-\sqrt{2}}}\right),
\]
we have
\[
\Er\left[\exp\left(\lambda'\sum_{\substack{i\in[N]\\ j\in[M]}}d(i,j,\pi(i),\tau(j))^2\right)\right]\leq\exp\left(\frac{\tilde V_{\sf{rect}}\lambda'}{1-\frac{B_{\sf{rect}}^2\lambda'}{2-\sqrt{2}}}\right).
\]
Let $\lambda\in[0,0.64/B_{\sf{rect}})$, and take $\lambda' = \frac{\lambda^2}{2(1-B_{\sf{rect}}\lambda)}\in[0,(2-\sqrt
2)/B_{\sf{rect}}^2)$. Then,
\[
\exp\left(\frac{\tilde V_{\mathrm{rect}}\,\lambda'}{1-\frac{B_{\mathrm{rect}}^{2}\lambda'}{2-\sqrt2}}\right)
=
\exp\left(\frac{\tilde V_{\mathrm{rect}}\,\lambda^{2}}{2\Bigl(1-B_{\sf{rect}}\lambda-\frac{B_{\sf{rect}}^2\lambda^{2}}{2(2-\sqrt2)}\Bigr)}\right).
\]
Moreover, for $x\in[0,0.64]$, we can check that
\[
1-x-\frac{x^{2}}{2(2-\sqrt2)}\ge 1-\frac{x}{0.64}.
\]
Hence, we have
\[
\Er\left[\exp\left(\frac{\norm{[d(i,j,\pi(i),\tau(j))]_{i,j}}_{\sf F}^2\lambda^2}{2(1-B_{\sf{rect}}\lambda)}\right)\right]\leq\exp\left(\frac{\tilde V_{\sf{rect}}\lambda^2}{2(1-\frac{B_{\sf{rect}}\lambda}{0.64})}\right).
\]
Therefore, we obtain
\[\Er\left[\exp\left(\lambda{\sum_{\substack{i\in[N]\\ j\in[M]}} d(i,j,\pi(i),\tau(j))G_iG'_j}\right)\right]\leq\exp\left(\frac{\tilde V_{\sf{rect}}\lambda^2}{2(1-\frac{B_{\sf{rect}}\lambda}{0.64})}\right)\]
and complete the proof.
\end{proof}

\subsection{Proof of Lemma \ref{lem:Bennet_MGF}}
\begin{proof}[Proof of Lemma \ref{lem:Bennet_MGF}]
By Chernoff's method, for any \(\lambda\ge 0\),
\[
\Prb\big(X\ge t\big)
\le
\exp(-\lambda t)\Er e^{\lambda X}
\le
\exp\big(-\lambda t+C\lambda^{2}e^{4\lambda}\big).
\]
Choose \(\lambda\) such that
\[
(4\lambda^2+2\lambda)e^{4\lambda} = \frac{t}{C}. 
\]
For any $x>0$, we can show that
\[
\left(4\left(\frac{1}{6}\log(1+x)\right)^2+2\left(\frac{1}{6}\log(1+x)\right)\right)e^{4\left(\frac{1}{6}\log(1+x)\right)}\leq x.
\]
Then, by monotonicity of $(4\lambda^2+2\lambda)e^{4\lambda}$, we have
\[
\lambda \leq \frac{1}{6}\log\left(1+\frac{t}{C}\right).
\]
Consequently,
\[
-\lambda t+C\lambda^{2}e^{4\lambda} = -\lambda t+\frac{\lambda^2}{4\lambda^2+2\lambda}t=-\frac{4\lambda^3+\lambda^2}{4\lambda^2+2\lambda}t\leq -\frac{\lambda}{2}t\leq -\frac{1}{12}t\log\left(1+\frac{t}{C}\right),
\]
and the proof is thus complete.
\end{proof}
\subsection{Proof of Lemma \ref{lem:Bennet_1d_lem1}}
\begin{proof}[Proof of Lemma \ref{lem:Bennet_1d_lem1}]
This proof follows that of \citet[Theorem 3.3]{polaczyk2023concentrationboundssamplingreplacement}. We include it here for completeness. For any $i,j$, denote $f_{ij} = \sum_{k=1}^N a(k,\pi_{ij}(k))$ where $\pi_{ij}$ is defined as in \eqref{defpiprime}. Since $a(i,j)\in[0,1]$, we have
\[\begin{aligned}
\quad\sum_{i,j\in[N]} (f_{ij}-f)_{+}&
= \sum_{i,j\in[N]} (a(i,\pi(j))+a(j,\pi(i))-a(i,\pi(i))-a(j,\pi(j)))_{+}\\
&\le \sum_{i,j\in[N]} (a(i,\pi(j))+a(j,\pi(i)))\\
&= 2\sum_{i,j\in [N]} a(i,j)\\
&= 2N\,\Er[f],
\end{aligned}\]
where $x_+:=\max\{x,0\}$ for $x\in \mathbb R$.
For all positive functions $f$, define its entropy functional by $\operatorname{Ent}(f)
:= \Er[ f \log f]-\Er[f]\log\Er[f]$. By the modified log-Sobolev inequality and the convexity of
$x\mapsto e^{2x}$, we obtain
\begin{align*}
\operatorname{Ent}(e^{\lambda f})
&\le \frac{\lambda}{N}\,
\Er\left[ e^{\lambda f}
\sum_{i,j\in[N]} \bigl(e^{\lambda (f_{ij}-f)_{+}}-1\bigr)(f_{ij}-f)_{+}\right] \\
&\le \frac{\lambda}{N}(e^{2\lambda}-1)\,
\Er\left[ e^{\lambda f}
\sum_{i,j\in[N]} (f_{ij}-f)_{+}\right] \\
&\le 2\lambda (e^{2\lambda}-1)\,
\Er [f] \,\Er [e^{\lambda f}] \\
&\le 4\lambda^{2} e^{2\lambda}\,
\Er [f] \,\Er [e^{\lambda f}],
\end{align*}
for all $\lambda\ge 0$. Hence, using \citet[Proposition C.1]{polaczyk2023concentrationboundssamplingreplacement}, with
$a = 4\,\Er[f]$ and $b = 2$ gives the conclusion.
\end{proof}
\subsection{Proof of Lemma \ref{lem:Bennet_1d_lem2}}
\begin{proof}[Proof of Lemma \ref{lem:Bennet_1d_lem2}]
Denote $\tilde{a}(i,j)$ as the $(i,j)$-entry of the doubly centered version $\bf\tilde{A}$ of matrix $\bf A$, given by
\[
\tilde{a}(i,j) = a(i,j) -\frac{1}{N}\sum_{k=1}^N a(k,j)- \frac{1}{N}\sum_{\ell=1}^N a(i,\ell) - \frac{1}{N}\sum_{k,\ell\in [N]} a(k,\ell).
\]
Thus,
\[
\sum_{i=1}^N \tilde{a}(i,\pi(i)) = f-\Er[f].
\]
By Lemma \ref{lem:1d} and Lemma \ref{lem:1d_MGF}, for any $|\lambda|\leq \frac{\sqrt{2}}{2\sqrt{3}}$, we have
\begin{align*}
\Er[\exp(\lambda(f-\Er[f]))]&\leq \Er\left[\exp\left(2\sqrt{3}\lambda\sum_{i=1}^N \tilde{a}(i,\pi(i))G_i\right)\right]\\
&= \Er\left[\exp\left(6\lambda^2\sum_{i=1}^N \tilde{a}(i,\pi(i))^2\right)\right]\\
&\leq \Er\left[\exp\left(6\lambda^2\sum_{i=1}^N a(i,\pi(i))^2\right)\right]\\
&=\Er\left[\exp\left(2\sqrt{3}\lambda\sum_{i=1}^N a(i,\pi(i))G_i\right)\right]\\
&\leq \exp\left(\frac{V_A(2\sqrt{3}\lambda)^2(1-(2\sqrt{3}\lambda)^2/4)}{2(1-(2\sqrt{3}\lambda)^2/2)}\right).
\end{align*}
We conclude the proof since 
\[
\frac{V_A(2\sqrt{3}\lambda)^2(1-(2\sqrt{3}\lambda)^2/4)}{2(1-(2\sqrt{3}\lambda)^2/2)} = \frac{6(1-3\lambda^2)}{(1-6\lambda^2)}V_A\lambda^2\leq 12V_A\lambda^2
\]
for any $|\lambda|\leq 1/3$.
\end{proof}

\subsection{Proof of Lemma \ref{lem:Bennet_1d}}
\begin{proof}[Proof of Lemma \ref{lem:Bennet_1d}]
Noting that double centering reduces $V_A$ while leaving $f-\Er[f]$ invariant, we assume without loss of generality that $\A$ is a doubly centered matrix. Similar to the proof of \citet[Theorem 3.1]{polaczyk2023concentrationboundssamplingreplacement}, for a fixed $\lambda>0$, set
$$
\rho = \frac{1}{3\lambda}.
$$
Denote 
\[
f^{\downarrow} := \sum_{i=1}^N a(i,\pi(i)) \cdot\ind(|a(i,\pi(i))| \leq \rho)
~~ {\rm and} ~~
f^{\uparrow} := \sum_{i=1}^N |a(i,\pi(i))| \cdot\ind(|a(i,\pi(i))| > \rho),
\]
so that $f \leq f^{\downarrow} + f^{\uparrow}$. We then aim to control the MGFs for $f^{\downarrow}$ and $f^{\uparrow}$. By Lemma \ref{lem:Bennet_1d_lem2} applied to $f^{\downarrow}/\rho$, we have for any $\lambda'\in\left[0,\frac{1}{3}\right]$,
$$
\Er[\exp(\lambda' (f^{\downarrow}/\rho-\Er[f^{\downarrow}/\rho]))]\leq \exp\left(\frac{12V_A\lambda'^2}{\rho^2}\right),
$$
which implies
$$
\Er[\exp(3\lambda\lambda' (f^{\downarrow}-\Er[f^{\downarrow}]))]\leq \exp\left(12V_A(3\lambda\lambda')^2\right).
$$
Taking $\lambda' = \frac{1}{3}$, we get
$$
\Er[\exp(\lambda (f^{\downarrow}-\Er[f^{\downarrow}]))]\leq \exp\left(12V_A\lambda^2\right).
$$
By the definitions of $f^{\uparrow}$ and $\rho$ ,
\[
\Er [f^{\uparrow}] \leq \frac{V_A}{\rho} =3V_A\lambda.
\]
Hence by Lemma \ref{lem:Bennet_1d_lem1} applied to $f^{\uparrow}$,
$$
\Er[\exp(\lambda (f^{\uparrow}-\Er[f^{\uparrow}]))]\leq \exp(2\Er[f^{\uparrow}]\lambda (e^{2\lambda}-1))\leq \exp\left(6V_A\lambda^2(e^{2\lambda}-1)\right).
$$
Using the assumption $\Er [f] = 0$ and triangle inequality, we obtain
$$
|\Er [f^{\downarrow}]| = |\Er [f^{\downarrow}] - \Er [f]| \leq \Er [f^{\uparrow}] \leq 3V_A\lambda.
$$
Combining the two MGF bounds we arrive at, for any $\lambda\geq 0$,
\begin{align*}
\Er\left[\exp\left(\lambda f\right)\right]
&\leq \Er\left[\exp\left(\lambda \left(f^{\downarrow} + f^{\uparrow}\right)\right)\right] \\
&\leq \frac{1}{2}\left(\Er\left[\exp\left(2\lambda f^{\downarrow}\right)\right]+\Er\left[\exp\left(2\lambda f^{\uparrow}\right)\right]\right) \\
&\leq \frac{1}{2}\exp\left(6V_A\lambda^2\right)\left(\Er\left[\exp\left(2\lambda \left(f^{\downarrow}-\Er\left[f^{\downarrow}\right]\right)\right)\right]+\Er\left[\exp\left(2\lambda \left(f^{\uparrow}-\Er\left[f^{\uparrow}\right]\right)\right)\right]\right) \\
&\leq \frac{1}{2}\exp\left(6V_A\lambda^2\right)\left(\exp\left(48V_A\lambda^2\right)+\exp\left(24V_A\lambda^2\left(e^{4\lambda}-1\right)\right)\right)\\
&\leq \exp\left(54V_A\lambda^2e^{4\lambda}\right).
\end{align*}
Consequently, by Lemma  \ref{lem:Bennet_MGF},  we have
$$
\Prb(f\geq t)
\le
\exp\left(-\frac{t}{12}\log\left(1+\frac{t}{54V_A}\right)\right)
$$
and the proof is thus complete.
\end{proof}

\subsection{Proof of Lemma \ref{lem:Bennet_2d_decoupled}}
\begin{proof}[Proof of Lemma \ref{lem:Bennet_2d_decoupled}]
    Given $\tau$, let $b_{ik} = \frac{1}{\nu_{\sf rect}} \sum_{j=1}^M c_{ij}a_{k\tau(j)}$, and thus we have $|b_{ik}|\leq 1$ and
    $$
    \frac{ Q_{\sf BF}}{\nu_{\sf rect}} = \frac{1}{\nu_{\sf rect}}\sum_{\substack{i\in [N]\\j\in[M]}} c_{i j} a_{\pi(i)\tau(j)} = \sum_{i=1}^Nb_{i\pi(i)}.
    $$
    By Lemma \ref{lem:Bennet_1d} applied to $b_{ik}$, we have
    $$
    \Er[\exp{(\lambda ( Q_{\sf BF}-\Er[ Q_{\sf BF}\mid\tau])/\nu_{\sf rect})}\mid\tau]
\leq \exp(54\Er[\Sigma_\tau^2]\lambda^2e^{4\lambda}),
    $$
where
    $$   
\Er[\Sigma_\tau^2] = \frac{1}{N}\sum_{\substack{i,k\in [N]}} b_{ik}^2 = \frac{1}{N\nu_{\sf rect}^2}\sum_{\substack{i,k\in [N]}}\left(\sum_{j=1}^M c_{ij}a_{k\tau(j)}\right)^2\leq \frac{1}{N\nu_{\sf rect}^2}\norm{\C}_{\sf op}^2\norm{\A}^2_{\sf F},
    $$
for all $\tau \in \mathcal{S}_M$. Therefore,
    $$
    \Er[\exp{(\lambda ( Q_{\sf BF}-\Er[ Q_{\sf BF}\mid\tau])/\nu_{\sf rect})}]
\leq \exp\left(\frac{54}{N\nu_{\sf rect}^2}\norm{\C}_{\sf op}^2\norm{\A}^2_{\sf F}\lambda^2e^{4\lambda}\right).
    $$
Since matrix $\C$ is doubly centered, we have
\[
\Er[ Q_{\sf BF}\mid\tau] = \frac{1}{N}\sum_{\substack{i,k\in[N]\\ j\in[M]}}c_{ij}a_{k\tau(j)} = \frac{1}{N}
\sum_{\substack{i\in[N]\\ j\in[M]}}c_{ij}\sum_{k=1}^N a_{k\tau(j)}=0.
\]
Therefore, $\Er[ Q_{\sf BF}]=0=\Er[ Q_{\sf BF}\mid\tau]$, which implies
$$
    \Er[\exp{(\lambda ( Q_{\sf BF}-\Er[ Q_{\sf BF}])/\nu_{\sf rect})}]
\leq \exp\left(\frac{54}{N\nu_{\sf rect}^2}\norm{\C}_{\sf op}^2\norm{\A}^2_{\sf F}\lambda^2e^{4\lambda}\right).
    $$
    Hence, by Lemma \ref{lem:Bennet_MGF}, for any $t\geq0$,
\[
\Prb\Big( Q_{\sf BF}-\Er[ Q_{\sf BF}] \geq t\Big)
\leq \exp{\left(-\frac{t}{12\nu_{\sf rect}}\cdot\log{\left(1+\frac{\nu_{\sf rect} t}{\frac{54}{N}\norm{\C}_{\sf op}^2\norm{\A}_{\sf F}^2}\right)} \right)},
\]
and the proof is then complete.
\end{proof}

\subsection{Proof of Lemma \ref{lem:decoupling_zero_diagonal}}

\begin{proof}[Proof of Lemma~\ref{lem:decoupling_zero_diagonal}]
    By the assumptions on matrices $\C$ and $\A$, we have, for all $i,j,k,\ell\in[N]$, 
    \[
    d(i,j,k,\cdot) = d(i,j,\cdot,\ell) = d(i,j,k,k)=0.
    \]
Conditioning on $\pi$, we have $$\begin{aligned}
        &\quad\Er\left[\left.\sum_{i\in I,j\in I^c}\tilde{d}_{I,\pi(I)}(i,j,\pi(i),\pi(j))\right|\pi\right]\\
        &=\Er\left[\left.\sum_{i\in I,j\in I^c}d(i,j,\pi(i),\pi(j))\right|\pi\right]-\Er\left[\left.\sum_{i\in I,j\in I^c}\frac{1}{|I|}\sum_{k\in \pi(I)} d(i,j,k,\pi(j))\right|\pi\right]\\
        &\quad-\Er\left[\left.\sum_{i\in I,j\in I^c}\frac{1}{|I^{c}|}\sum_{\ell\in \pi(I^{c})} d(i,j,\pi(i),\ell)\right|\pi\right]+\Er\left[\left.\sum_{i\in I,j\in I^c}\frac{1}{|I||I^c|}\sum_{k\in \pi(I),\ell\in\pi(I^c)} d(i,j,k,\ell)\right|\pi\right]. 
    \end{aligned}
$$
We have
$$
\Er\left[\left.\sum_{i\in I,j\in I^c}d(i,j,\pi(i),\pi(j))\right|\pi\right] = \frac{\binom{N-2}{N_1-1}}{\binom{N}{N_1}}\sum_{i\neq j}d(i,j,\pi(i),\pi(j)) = \frac{N_1(N-N_1)}{N(N-1)}\sum_{i\neq j}d(i,j,\pi(i),\pi(j)) 
$$
and
\[
\begin{aligned}
&\quad\Er\left[\sum_{i\in I, j\in I^{c}}\frac{1}{|I|}\sum_{k\in \pi(I)} d(i,j,k,\pi(j))\middle|\pi\right] \\
&= \frac{1}{N_1}\sum_{i\neq j}\sum_{k=1}^{N}
d(i,j,k,\pi(j))\,
\Prb\left(i\in I, j\notin I, \pi^{-1}(k)\in I\right) \\
&= \frac{1}{N_1}\sum_{i\ne j}\Bigg[
\sum_{\pi^{-1}(k)\notin\{i,j\},k}
d(i,j,k,\pi(j))\frac{\binom{N-3}{N_1-2}}{\binom{N}{N_1}}
+
d\big(i,j,\pi(i),\pi(j)\big)\frac{\binom{N-2}{N_1-1}}{\binom{N}{N_1}}
+
d\big(i,j,\pi(j),\pi(j)\big)\cdot 0
\Bigg] \\
&=
\frac{\binom{N-3}{\,N_1-2\,}}{N_1\binom{N}{\,N_1\,}}
\sum_{i\ne j}\sum_{k=1}^{N} d(i,j,k,\pi(j))
+\left(
\frac{\binom{N-2}{\,N_1-1\,}}{N_1\binom{N}{\,N_1\,}}
-\frac{\binom{N-3}{\,N_1-2\,}}{N_1\binom{N}{\,N_1\,}}
\right)\sum_{i\ne j} d(i,j,\pi(i),\pi(j))
\\
&\quad-\frac{\binom{N-3}{\,N_1-2\,}}{N_1\binom{N}{\,N_1\,}}
\sum_{i\ne j} d(i,j,\pi(j),\pi(j)) \\
&=\frac{(N-N_1)(N-N_1-1)}{N(N-1)(N-2)} \sum_{i\ne j} d(i,j,\pi(i),\pi(j))
-
\frac{(N_1-1)(N-N_1)}{N(N-1)(N-2)} \sum_{i\ne j} d(i,j,\pi(j),\pi(j))\\
&=\frac{(N-N_1)(N-N_1-1)}{N(N-1)(N-2)} \sum_{i\ne j} d(i,j,\pi(i),\pi(j)).
\end{aligned}
\]
Similarly,
\begin{align*}
&\quad\Er\left[\sum_{i\in I, j\in I^{c}}\frac{1}{|I^c|}\sum_{\ell\in \pi(I^c)} d(i,j,\pi(i),\ell)\middle|\pi\right] \\
&=\frac{N_1(N_1-1)}{N(N-1)(N-2)} \sum_{i\ne j} d(i,j,\pi(i),\pi(j))
-
\frac{(N-N_1-1)N_1}{N(N-1)(N-2)} \sum_{i\ne j} d(i,j,\pi(i),\pi(i))\\
&=\frac{N_1(N_1-1)}{N(N-1)(N-2)} \sum_{i\ne j} d(i,j,\pi(i),\pi(j)).
\end{align*}
We have 
\begin{align*}
&\quad\Er\left[\sum_{i\in I, j\in I^{c}}\frac{1}{|I||I^{c}|}
\sum_{k\in \pi(I), \ell\in \pi(I^{c})} d(i,j,k,\ell)\middle|\pi\right] \\[2mm]
&= \frac{1}{N_1(N-N_1)}\sum_{i\ne j}\sum_{k\ne \ell}
d(i,j,k,\ell)\Prb\left(i\in I, j\notin I, \pi^{-1}(k)\in I, \pi^{-1}(\ell)\notin I\right) \\
&= \frac{1}{N_1(N-N_1)}\sum_{i\ne j}\Bigg[
\frac{\binom{N-4}{N_1-2}}{\binom{N}{N_1}}
\sum_{\substack{k\notin\{\pi(i),\pi(j)\}\\ \ell\notin\{\pi(i),\pi(j)\}, \ell\ne k}}
d(i,j,k,\ell)
+
\frac{\binom{N-3}{N_1-1}}{\binom{N}{N_1}}
\sum_{\substack{\ell\notin\{\pi(i),\pi(j)\}}}
d(i,j,\pi(i),\ell) \\[1mm]
&\quad+
\frac{\binom{N-3}{N_1-2}}{\binom{N}{N_1}}
\sum_{\substack{k\notin\{\pi(i),\pi(j)\}}}
d(i,j,k,\pi(j))\quad+
\frac{\binom{N-2}{N_1-1}}{\binom{N}{N_1}}
d(i,j,\pi(i),\pi(j))
\Bigg] \\
&= \frac{1}{N_1(N-N_1)}\sum_{i\ne j}\Bigg[
\frac{N_1(N_1-1)(N-N_1)(N-N_1-1)}{N(N-1)(N-2)(N-3)}
\sum_{\substack{k\notin\{\pi(i),\pi(j)\}\\ \ell\notin\{\pi(i),\pi(j)\}, \ell\ne k}}
d(i,j,k,\ell) \\
&\quad+\frac{N_1(N-N_1)(N-N_1-1)}{N(N-1)(N-2)}
\sum_{\substack{\ell\notin\{\pi(i),\pi(j)\}}}
d(i,j,\pi(i),\ell)
+
\frac{N_1(N_1-1)(N-N_1)}{N(N-1)(N-2)}
\sum_{\substack{k\notin\{\pi(i),\pi(j)\}}}
d(i,j,k,\pi(j)) \\[1mm]
&\quad+\frac{N_1(N-N_1)}{N(N-1)}d(i,j,\pi(i),\pi(j))
\Bigg]\\
&= -\frac{1}{N_1(N-N_1)}\sum_{i\ne j}\Bigg[
\frac{N_1(N_1-1)(N-N_1)(N-N_1-1)}{N(N-1)(N-2)(N-3)}\big(-d(i,j,\pi(i),\pi(j))-d(i,j,\pi(j),\pi(i))\\
&\quad- 2d(i,j,\pi(i),\pi(i))-2d(i,j,\pi(j),\pi(j))+\sum_k d(i,j,k,k)\big)\\
&\quad+\frac{N_1(N-N_1)(N-N_1-1)}{N(N-1)(N-2)}
\big(d(i,j,\pi(i),\pi(i))+d(i,j,\pi(i),\pi(j))\big)\\
&\quad+
\frac{N_1(N_1-1)(N-N_1)}{N(N-1)(N-2)}
(d(i,j,\pi(i),\pi(j))+d(i,j,\pi(j),\pi(j))) \\
&\quad-\frac{N_1(N-N_1)}{N(N-1)}d(i,j,\pi(i),\pi(j))
\Bigg]\\
&=\frac{(N_1-1)(N-N_1-1)}{N(N-1)(N-2)(N-3)} \sum_{i\ne j} d(i,j,\pi(i),\pi(j))+\frac{(N_1-1)(N-N_1-1)}{N(N-1)(N-2)(N-3)} \sum_{i\ne j} d(i,j,\pi(j),\pi(i))\\
&\quad-\left(\frac{N-N_1-1}{N(N-1)(N-2)}-\frac{2(N_1-1)(N-N_1-1)}{N(N-1)(N-2)(N-3)}\right)
\sum_{i\ne j}d(i,j,\pi(i),\pi(i))\\
&\quad-
\left(\frac{N-N_1-1}{N(N-1)(N-2)}-\frac{2(N_1-1)(N-N_1-1)}{N(N-1)(N-2)(N-3)}\right)
\sum_{i\ne j}d(i,j,\pi(j),\pi(j)) \\
&\quad +\frac{(N_1-1)(N-N_1-1)}{N(N-1)(N-2)(N-3)} \sum_{i\ne j}\sum_{k=1}^N d(i,j,k,k)\\
&=\frac{(N_1-1)(N-N_1-1)}{N(N-1)(N-2)(N-3)} \sum_{i\ne j} d(i,j,\pi(i),\pi(j))+\frac{(N_1-1)(N-N_1-1)}{N(N-1)(N-2)(N-3)} \sum_{i\ne j} d(i,j,\pi(j),\pi(i))\\
&\quad + \left(\frac1{N(N-1)}-\frac{4(N_1-1)(N-N_1-1)}{N(N-1)(N-2)(N-3)}\right) \sum_{i=1}^N d(i,i,\pi(i),\pi(i))\\
&\quad-\frac{(N_1-1)(N-N_1-1)}{N(N-1)(N-2)(N-3)} \sum_{i,k\in[N]} d(i,i,k,k)\\
&=\frac{(N_1-1)(N-N_1-1)}{N(N-1)(N-2)(N-3)} \sum_{i\ne j} d(i,j,\pi(i),\pi(j))+\frac{(N_1-1)(N-N_1-1)}{N(N-1)(N-2)(N-3)} \sum_{i\ne j} d(i,j,\pi(j),\pi(i)).
\end{align*}
Therefore, we have
$$\begin{aligned}
        &\quad\Er\left[\left.\sum_{i\in I,j\in I^c}\tilde{d}_{I,\pi(I)}(i,j,\pi(i),\pi(j))+\frac{(N_1-1)(N-N_1-1)}{N(N-1)(N-2)(N-3)}\sum_{i\ne j} d(i,j,\pi(j),\pi(i))\right|\pi\right]\\
        &=\Er\left[\left.\sum_{i\in I,j\in I^c}d(i,j,\pi(i),\pi(j))\right|\pi\right]-\Er\left[\left.\sum_{i\in I,j\in I^c}\frac{1}{|I|}\sum_{k\in \pi(I)} d(i,j,k,\pi(j))\right|\pi\right]\\
        &\quad-\Er\left[\left.\sum_{i\in I,j\in I^c}\frac{1}{|I^{c}|}\sum_{\ell\in \pi(I^{c})} d(i,j,\pi(i),\ell)\right|\pi\right]+\Er\left[\left.\sum_{i\in I,j\in I^c}\frac{1}{|I||I^c|}\sum_{k\in \pi(I),\ell\in\pi(I^c)} d(i,j,k,\ell)\right|\pi\right]\\
        &+\frac{(N_1-1)(N-N_1-1)}{N(N-1)(N-2)(N-3)}\sum_{i\ne j} d(i,j,\pi(j),\pi(i))\\
        &=\left(\frac{N_1(N-N_1)}{N(N-1)}-\frac{(N-N_1)(N-N_1-1)}{N(N-1)(N-2)}\right.\\
        &\left.\qquad-\frac{N_1(N_1-1)}{N(N-1)(N-2)}+\frac{(N_1-1)(N-N_1-1)}{N(N-1)(N-2)(N-3)}\right)\sum_{i\ne j} d(i,j,\pi(i),\pi(j))\\
        &\quad+\frac{(N_1-1)(N-N_1-1)}{N(N-1)(N-2)(N-3)}\sum_{i\ne j} d(i,j,\pi(j),\pi(i))\\
        &=\frac{(N_1-1)(N-N_1-1)\big(N^2 - 3N + 1\big)}{N(N-1)(N-2)(N-3)}\sum_{i\ne j} d(i,j,\pi(i),\pi(j)).
    \end{aligned}
$$
Thus for any convex function $\varphi$, we have
\begin{align*}
   &\quad\Er\left[\varphi\left(\sum_{i\ne j} d(i,j,\pi(i),\pi(j))\right)\right]\\
   &=\Er \left[\varphi\left(\Er\left[\left.\alpha\sum_{i\in I,j\in I^c}\tilde{d}_{I,\pi(I)}(i,j,\pi(i),\pi(j))+\beta\sum_{i\ne j} d(i,j,\pi(j),\pi(i))\right|\pi\right]\right)\right]\\
   &\leq\frac{1}{2}\Er \left[\varphi\left(2\alpha\sum_{i\in I,j\in I^c}\tilde{d}_{I,\pi(I)}(i,j,\pi(i),\pi(j))\right)\right]+ \frac{1}{2}\Er \left[\varphi\left(2\beta\sum_{i\ne j} d(i,j,\pi(j),\pi(i))\right)\right]\\
   &=\frac{1}{2}\Er \left[\varphi\left(2\alpha\sum_{i\in I,j\in I^c}\tilde{d}_{I,J}(i,j,\pi_1(i),\pi_2(j))\right)\right]+ \frac{1}{2}\Er \left[\varphi\left(2\beta\sum_{i\ne j} d(i,j,\pi(j),\pi(i))\right)\right],
\end{align*}
where the final equality follows because $\sum_{i\in I,j\in I^c}\tilde{d}_{I,\pi(I)}(i,j,\pi(i),\pi(j))$ and   $\sum_{i\in I,j\in I^c}\tilde{d}_{I,J}(i,j,\pi_1(i),\pi_2(j))$ share the same distribution, and can be shown similarly to \eqref{eq:converttopi1pi2}.
\end{proof}

{\small 
\bibliographystyle{apalike}
\bibliography{AMS}
}
\end{document}